\documentclass[11pt, a4paper,pagebackref]{amsart}
\usepackage{amsmath}
\usepackage{amssymb}
\usepackage{amsthm}
\usepackage{amscd}
\usepackage{tikz-cd}

\usepackage{color}
\usepackage{hyperref}

\usepackage[inner=2.3cm,outer=2.3cm, bottom=3.3cm]{geometry}
\usepackage{setspace}

\numberwithin{equation}{section}

\onehalfspacing

\usepackage[colorinlistoftodos]{todonotes}

\usepackage[all]{xy}
\usepackage{enumerate}
\usepackage{hyperref}

\usepackage{color}
\hypersetup{colorlinks=true} 






\newtheorem{theorem}{Theorem}[section]

\newtheorem{lemma}[theorem]{Lemma}
\newtheorem{proposition}[theorem]{Proposition}
\newtheorem{corollary}[theorem]{Corollary}

\theoremstyle{definition}

\newtheorem{remark}[theorem]{Remark}
\newtheorem{remark and definition}[theorem]{Remark and Definition}
\newtheorem{remark and notation}[theorem]{Remark and Notation}

\newtheorem*{setup}{Setup}

\newtheorem{example}[theorem]{Example}

\newtheorem{question}[theorem]{Question}

\newcommand\Hom{\operatorname{Hom}}
\newcommand\Ext{\operatorname{Ext}}
\newcommand\Tor{\operatorname{Tor}}

\newcommand\depth{\operatorname{depth}}
\newcommand\grade{\operatorname{grade}}

\newcommand\Ker{\operatorname{\Ker}}

\newcommand\pd{\operatorname{pd}}

\newcommand\RHom{\operatorname{\mathbf{R}Hom}}

\DeclareMathOperator{\Hdim}{H-dim}

\DeclareMathOperator{\type}{r}

\DeclareMathOperator{\gid}{Gid}

\author[Holanda,  Jorge-P\'erez, Mendoza-Rubio]{Rafael Holanda, Victor H. Jorge-P\'erez and Victor D. Mendoza-Rubio}

\address{Departamento de Matemática, CCEN, Universidade Federal de Pernambuco, Recife, PE, 50740-560, Brazil}
\email{rafael.holanda@ufpe.br}

\address{Universidade de S{\~a}o Paulo -
ICMC, Caixa Postal 668, 13560-970, S{\~a}o Carlos-SP, Brazil}
\email{vhjperez@icmc.usp.br}
\email{vicdamenru@gmail.com}


\keywords{Finite homological dimension, Ext modules, Gorenstein ring, Deficiency module}
\subjclass[2020]{Primary: 13D05, 13D07, 13H10. The second author was supported by grant 2019/21181-0, São Paulo Research Foundation (FAPESP). The third author was supported by grants 2022/03372-5 and 2023/15733-5, S\~ao Paulo Research Foundation (FAPESP)}

\begin{document}

\title{On extension modules of finite homological dimension}

\maketitle

\begin{abstract}
We explore the implications of the finiteness of homological dimensions for Ext modules, focusing on projective dimension, injective dimension, and their Gorenstein counterpart. In this direction, we establish several finiteness criteria for homological dimensions. Under such finiteness conditions, our main result is a new duality for certain Ext modules when tensored by canonical modules. 
\end{abstract}

\section{Introduction}

After the astonishing ideas of Auslander, Buchsbaum, and Serre culminating in the characterization of the regular property of a Noetherian local ring by the finiteness of the projective dimension of its residual field, homological algebra has played a fundamental role in commutative ring theory.  Indeed, this foundation enabled diverse characterizations of ring-theoretic properties via finiteness conditions on homological dimensions (e.g., projective, complete intersection, Gorenstein dimensions, and their injective versions), see, for instance, some classical references \cite{Stablemoduletheory, Completeintersectiondimension, GorensteinDimensions, isomorphismsbetweencomplexeswithapplicationstothehomologicaltheoryofmodules, Dimensionprojectivefinietcohomologielocale}. Establishing criteria for the finiteness of homological dimensions as well as their prescription -- especially via vanishing of Ext modules -- is a very active topic in homological commutative algebra, containing a vast literature including but not limited to 
\cite{Stablemoduletheory, HomologicalDimensionsAndRelatedInvariantsOfModulesOverLocalRings, GorensteinDimensions,  OnGorensteinProjectiveInjectiveAndFlatDimensionsAFunctorialDescriptionWithApplications, HomologicalDimensionsTheGorensteinPropertyAndSpecialCasesOfSomeConjectures, FH, GorensteinringsviahomologicaldimensionsandsymmetryinvanishingofExtandTatecohomology, ARCForModulesWhose(Self)DualHasFiniteCompleteIntersectionDimensionv2, VanishingOf(Co)homologyfrenessCriteriaAndTheAuslanderReitenConjectureforCohen-MacaulayBurch, GdimensionOverLocalHomomorphisms, AuslanderReitenCinjetivadimensionVanishingofExt, OnModulesWhoseDualIsOfFiniteGorensteinDimension, GorensteinDimensionAndTorsionOfModulesOverCommutativeNoetherianRings, NumericalApectsofComplexesOfFiniteHomologicalDimensions,HomologicalDimensionsOfRigidModules}. 

Recently, an approach to study the implications of vanishing of Ext is under the condition that certain Hom module has some finite homological dimension, as evidenced in \cite{HomologicalDimensionsTheGorensteinPropertyAndSpecialCasesOfSomeConjectures, injectivedimensiontakahashi, OnModulesWhoseDualIsOfFiniteGorensteinDimension}. In addition, Ghosh-Puthenpurakal \cite{GorensteinringsviahomologicaldimensionsandsymmetryinvanishingofExtandTatecohomology} and  Zargar-Gheibi \cite{NumericalApectsofComplexesOfFiniteHomologicalDimensions} studied the consequences of having Ext modules with finite injective dimension, deriving criteria for finiteness for the projective dimension of modules. Motivated by all, in this paper, we aim to study the consequences of having Ext modules with some finite homological dimension, focusing on the projective dimension, injective dimension, and their Gorenstein versions. Thus, the standard assumption in the present text does not revolve around the usual Ext vanishing hypothesis but rather the finiteness of its homological dimensions. 

Zargar and Gheibi proved in \cite{NumericalApectsofComplexesOfFiniteHomologicalDimensions} that for non-zero finitely generated modules $M$ and $N$ over a Noetherian local ring $R$ such that $\operatorname{id}_R(\Ext_R^i(M,N))<\infty$ for all $i$, the finiteness of the projective dimension of $M$ is equivalent to that of injective dimension of $N$. Then, what can one tell if one considers the Ext modules $\Ext_R^{i\geq 0}(M,N)$ with another finite homological dimension under the additional condition that $\operatorname{pd}_R(M)<\infty$ or $\operatorname{id}_R(N)<\infty$? We address this question in the following theorem.

\begin{theorem}[{\rm Corollary \ref{SPA11} \& Corollary \ref{sqly}}]\label{main1}
    Let $R$ be a Noetherian  local ring of dimension $d$, and let $M$ and $N$ be non-zero finitely generated $R$-modules. Let $\mathcal{H}$ be any of the homological dimensions $\operatorname{pd}, \operatorname{G-dim}, \operatorname{Gid}$, and suppose that  $\mathcal{H}(\Ext_R^i(M,N))<\infty$ for all $0\leq i \leq d-\operatorname{depth} (M)$.
    \begin{enumerate}[ \rm (1)]
        \item Suppose that $\operatorname{pd}_R (M)<\infty$.
        \begin{enumerate}[\rm ({1}.a)]
            \item If $\mathcal{H}\in  \{\operatorname{pd},\operatorname{G-dim}\}$, then $\mathcal{H}(N)<\infty$. 
            \item If $R$ has a dualizing complex and $\mathcal{H}=\operatorname{Gid}$, then $\operatorname{Gid}_R(N)<\infty$.  
        \end{enumerate}
        \item Suppose that $\operatorname{id}_R (N)<\infty$.
        \begin{enumerate}[\rm ({2}.a)]
            \item If $\mathcal{H}= \operatorname{pd}$, then $\operatorname{id}_R(M)<\infty$. 
            \item If $R$ has a dualizing complex and $\mathcal{H}=\operatorname{G-dim}$, then $\operatorname{Gid}_R (M)<\infty$. 
            \item If $\mathcal{H}=\operatorname{Gid}$, then $\operatorname{G-dim}_R(M)<\infty$.  
        \end{enumerate}
    \end{enumerate}
\end{theorem}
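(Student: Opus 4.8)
The plan is to deduce the statement from the complex-level Theorems \ref{GSPA11} and \ref{q221} by applying them to the pair of modules $M,N$ and reading off the homological information carried by $\RHom_R(M,N)$. Recall that $\RHom_R(M,N)$ is a complex whose cohomology vanishes in negative degrees and equals the finitely generated module $\Ext^i_R(M,N)$ in each degree $i\ge 0$, and that under either of the standing hypotheses of the theorem only finitely many of these are nonzero, so the complex is cohomologically bounded. Thus, once we know that the nonzero modules $\Ext^i_R(M,N)$ all occur in the range $0\le i\le d-\depth M$, the hypothesis that $\mathcal H(\Ext^i_R(M,N))<\infty$ for $0\le i\le d-\depth M$ says precisely that every cohomology module of $\RHom_R(M,N)$ has finite $\mathcal H$ --- which is exactly the input required by Theorems \ref{GSPA11} and \ref{q221}. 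Granting this concentration, parts (1.a) and (1.b) follow from Theorem \ref{GSPA11} and parts (2.a), (2.b), (2.c) from Theorem \ref{q221}, since the split between $\mathcal H\in\{\pd,\gdim\}$ and $\mathcal H=\gid$, together with the proviso that $R$ carry a dualizing complex, is already built into those statements.

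So the only point that must be checked by hand is the concentration claim. In case (1), where $\pd_R M<\infty$, the Auslander-Buchsbaum equality gives $\pd_R M=\depth R-\depth M\le d-\depth M$, whence $\Ext^i_R(M,N)=0$ for $i>d-\depth M$ and nothing more is needed. In case (2), where $\id_R N<\infty$, I would first establish the auxiliary vanishing $\Ext^i_R(M,N)=0$ for $i>\depth R-\depth M$ (a fortiori for $i>d-\depth M$, since $\depth R\le d$). I would prove this by induction on $\depth M$: the base case $\depth M=0$ is Bass's theorem $\id_R N=\depth R$, which forces $\Ext^i_R(M,N)=0$ for $i>\depth R$; for the inductive step, choose by prime avoidance an $x\in\mathfrak m$ regular on $M$, apply $\Hom_R(-,N)$ to $0\to M\xrightarrow{\,x\,}M\to M/xM\to 0$, and note that for $i>\depth R-\depth M$ one has $i+1>\depth R-\depth(M/xM)$, so $\Ext^{i+1}_R(M/xM,N)=0$ by the inductive hypothesis; hence multiplication by $x$ is surjective on the finitely generated module $\Ext^i_R(M,N)$, and Nakayama's lemma kills it.

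What remains is the translation performed inside the two cited theorems, for which one only has to confirm that their hypotheses genuinely hold. In case (1), $\pd_R M<\infty$ gives $\RHom_R(M,N)\simeq\RHom_R(M,R)\otimes^{\mathbf L}_R N$ with $\RHom_R(M,R)$ a nonzero perfect complex (a bounded complex of finitely generated free modules); this is the mechanism by which Theorem \ref{GSPA11} transfers finiteness of $\mathcal H$ from $\RHom_R(M,N)$ back to $N$ --- for $\mathcal H=\pd$, say, because finiteness of projective dimension of a cohomologically bounded complex with finitely generated cohomology is detected by boundedness of the homology of $-\otimes^{\mathbf L}_R k$ ($k$ the residue field), a property left unchanged by tensoring with the nonzero perfect complex $\RHom_R(M,R)$; the analogous reflection statements for $\gdim$ and for $\gid$ (the latter invoking the dualizing complex) are precisely what Theorem \ref{GSPA11} supplies. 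In case (2), the hypothesis $\id_R N<\infty$ is exactly what lets Theorem \ref{q221} convert finiteness of $\mathcal H$ of the ``$N$-dual'' $\RHom_R(M,N)$ into finiteness of $\pd$, $\gdim$, or $\gid$ of $M$, yielding (2.a), (2.b), and (2.c). I expect the only genuinely substantive point inside this corollary to be the case-(2) vanishing lemma above; everything else is either the Auslander-Buchsbaum formula or a direct appeal to Theorems \ref{GSPA11} and \ref{q221}.
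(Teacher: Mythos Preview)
Your argument is correct and follows the same route as the paper: reduce to the complex-level Theorems \ref{GSPA11} and \ref{q221} by checking that $\Ext_R^i(M,N)$ vanishes outside the range $0\le i\le d-\depth(M)$, using Auslander--Buchsbaum in case~(1) and the Bass formula $\id_R N=\depth R$ plus the Nakayama induction on $\depth(M)$ in case~(2). The only cosmetic difference is that the paper records Corollary~\ref{sqly} with an explicit Cohen--Macaulay hypothesis (harmless, since $\id_R N<\infty$ forces $R$ Cohen--Macaulay by the Bass conjecture), whereas you prove the needed Ext vanishing directly; your last paragraph sketching the internal mechanism of Theorem~\ref{GSPA11} via $\RHom_R(M,N)\simeq\RHom_R(M,R)\otimes_R^{\mathbf L}N$ is commentary rather than part of the proof and can be omitted.
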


Theorem \ref{main1} not only gives criteria for detecting the finiteness of homological dimension of modules but also allows one to obtain Gorenstein criteria as verified in Corollary \ref{Gor1} and Corollary \ref{Gor2}. In addition, it is worth noting that Theorem \ref{main1} says that the counterparts with Gorenstein dimensions of the results of Foxby in  \cite[Proposition 3.5(b)]{onthemuiinaminimalinjectiveresolution} are true. Furthermore, item (2.a) allows us to recover a celebrated result of Foxby given in \cite[Corollary 4.4]{isomorphismsbetweencomplexeswithapplicationstothehomologicaltheoryofmodules}, and item (2.c) gives a positive answer to the Gorenstein version of the question in \cite[Question 4.5]{GorensteinringsviahomologicaldimensionsandsymmetryinvanishingofExtandTatecohomology}.

In our study, we also consider deficiency modules, which were introduced by Schenzel \cite{Sch98}. Fiel and Holanda \cite{FH} proved that the finiteness of the projective (resp. injective) dimension of all deficiency modules of a finitely generated module $M$ over a local ring $R$ implies that $M$ has finite injective (resp. projective) dimension.  We not only prove this result with a different approach, but also, we establish its Gorenstein version. The converse is assured for Cohen-Macaulay modules, extending Foxby \cite{onthemuiinaminimalinjectiveresolution}.

\begin{theorem}[{\rm Theorem \ref{sq18z}}]\label{defmod}
    Let $R$ be a Noetherian local ring with a dualizing complex and let $M$ be a finitely generated $R$-module.
    \begin{enumerate}[\rm (1)]
        \item If $\operatorname{G-dim}_R(K^i(M))<\infty$ for all $i\geq 0$, then $\operatorname{Gid}_R(M)<\infty$.
        \item If $\operatorname{Gid}_R(K^i(M))<\infty$ for all $i\geq 0$, then $\operatorname{G-dim}_R(M)<\infty$.
    \end{enumerate}
\end{theorem}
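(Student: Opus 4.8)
\emph{Approach.} The plan is to recognize that the deficiency modules $K^i(M)$ are precisely the Ext modules of $M$ computed against a suitably shifted dualizing complex $\omega^{\bullet}$ of $R$ — which, when $R$ is Cohen--Macaulay, is nothing but the canonical module $\omega_R$ — and then to feed this into the complex-level result behind item~(2) of Theorem~\ref{main1}, namely Theorem~\ref{q221}. Concretely, I would first fix a normalized dualizing complex $D$ of $R$ and set $d=\dim R$. By Schenzel~\cite{Sch98}, with this normalization one has $K^i(M)\cong H^{-i}\bigl(\mathbf{R}\Hom_R(M,D)\bigr)$ for every $i$; since $M$ and $D$ each have bounded finitely generated cohomology, so does $\mathbf{R}\Hom_R(M,D)$, whence every $K^i(M)$ is finitely generated and $K^i(M)=0$ unless $\operatorname{depth}_R M\le i\le\dim_R M$. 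Next I would put $\omega^{\bullet}:=D[-d]$, again a dualizing complex of $R$: it is nonzero, has bounded finitely generated cohomology, and $\operatorname{id}_R(\omega^{\bullet})<\infty$. A shift computation then yields, for all $j$, $\Ext_R^{\,j}(M,\omega^{\bullet})=H^{j}\bigl(\mathbf{R}\Hom_R(M,\omega^{\bullet})\bigr)\cong H^{\,j-d}\bigl(\mathbf{R}\Hom_R(M,D)\bigr)\cong K^{\,d-j}(M)$, so that under the relabeling $i=d-j$ the family $\{\Ext_R^{\,j}(M,\omega^{\bullet})\}_{j\ge0}$ coincides with $\{K^i(M)\}_{i\le d}$, only finitely many terms being nonzero.

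Granting this dictionary, part~(1) falls out at once: the hypothesis $\operatorname{G-dim}_R(K^i(M))<\infty$ for all $i\ge0$ amounts to $\operatorname{G-dim}_R\bigl(\Ext_R^{\,j}(M,\omega^{\bullet})\bigr)<\infty$ for every $j$ (those with $d-j\notin[\operatorname{depth}_R M,\dim_R M]$ vanish, hence have finite homological dimension). Since moreover $\operatorname{id}_R(\omega^{\bullet})<\infty$ and $R$ has a dualizing complex, Theorem~\ref{q221} — applied with the complex $\omega^{\bullet}$ in the role of the second argument and $\mathcal H=\operatorname{G-dim}$, this being the complex form of case~(2.b) of Theorem~\ref{main1} and not requiring that argument to be a module — yields $\operatorname{Gid}_R(M)<\infty$. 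Part~(2) is the identical argument with $\mathcal H=\operatorname{Gid}$ throughout, invoking case~(2.c): from $\operatorname{Gid}_R(K^i(M))<\infty$ for all $i$ one gets $\operatorname{Gid}_R\bigl(\Ext_R^{\,j}(M,\omega^{\bullet})\bigr)<\infty$ for all $j$, and Theorem~\ref{q221} returns $\operatorname{G-dim}_R(M)<\infty$.

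I expect no deep obstacle: the substantive content — that finiteness of $\mathcal H$ on the modules $\Ext_R^{\,j}(M,\omega^{\bullet})$, together with $\operatorname{id}_R(\omega^{\bullet})<\infty$, forces the asserted homological dimension of $M$ to be finite — is delivered whole by Theorem~\ref{q221}. The only real care is the normalization bookkeeping: one must choose the shift of the dualizing complex so that the $K^i(M)$ appear as $\Ext_R^{\,j}(M,\omega^{\bullet})$ indexed over the nonnegative range that Theorem~\ref{q221} demands, and confirm that its hypothesis is met by the finitely many relevant Ext modules. One should not expect a shortcut via Kawasaki's presentation $R\cong S/I$ with $S$ Gorenstein local of dimension $n$ and $K^i(M)\cong\Ext_S^{\,n-i}(M,S)$: running Theorem~\ref{main1} over the Gorenstein ring $S$ only produces statements about $M$ as an $S$-module, which hold automatically and carry no information about $M$ over $R$ — so the dualizing complex must be used directly over $R$.
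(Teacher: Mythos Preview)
Your proposal is correct and follows essentially the same approach as the paper: identify the deficiency modules as $\Ext_R^{d-i}(M,\boldsymbol{D})$ for the normalized dualizing complex $\boldsymbol{D}$, note that only finitely many are nonzero, and apply Theorem~\ref{q221} with $\boldsymbol{N}=\boldsymbol{D}$ (which has finite injective dimension). The paper does exactly this in two lines; your extra shift $\omega^{\bullet}=D[-d]$ is unnecessary, since with the paper's normalization one already has $K^i(M)=\Ext_R^{d-i}(M,\boldsymbol{D})$ with $d-i\ge 0$, so Theorem~\ref{q221} applies directly to $\boldsymbol{D}$ without any reindexing.
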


Our main result is a new duality concerning the canonical module when tensored by certain Ext modules. Such a duality is also established for semidualizing modules (Theorem \ref{Cdualt}) and is applied in many directions throughout the paper.

\begin{theorem}[{\rm Theorem \ref{sqmm}}]\label{sqmm1}
    Assume $R$ is a Cohen-Macaulay local with canonical module $\omega$, and let $n$ be a non-negative integer. Let $\operatorname{H-dim}_R$ denote either 
    $\operatorname{pd}$ or $\operatorname{G-dim}$,  and $\operatorname{Hid}_R$ the injective counterpart. Let $M$ and $N$ be finitely generated $R$-modules such that $\operatorname{H-dim}_R(N)<\infty$.  Then the following conditions are equivalent:
    \begin{enumerate}[\rm(1)]
        \item $\operatorname{H-dim}_R(\Ext_R^i(M,N))<\infty$ for all $0\leq i \leq n$.
        \item $\operatorname{Hid}_R(\Ext_R^i(M, N \otimes_R \omega))<\infty$ for all $0\leq i \leq n$.
    \end{enumerate}
   In any case, we have natural isomorphisms $$\Ext_R^i(M,N) \otimes_R \omega \cong \Ext_R^i(M, N \otimes_R \omega)$$
    for all $0\leq i \leq n$. 
\end{theorem}

Using the behavior of the canonical module described in Theorem \ref{sqmm1}, we derive the following:

\begin{corollary}[{Corollary \ref{qza}}]
    Let $R$ be  a Cohen-Macaulay local ring of depth $t$ and, $M$ and $N$ be non-zero finitely generated $R$-modules such that $\operatorname{pd}_R(\Ext_R^i(M,N))<\infty$ for all $0\leq i \leq t-\operatorname{depth}(M)$, then $\operatorname{pd}_R(M)<\infty$ if and only if $\operatorname{pd}_R(N)<\infty$.
\end{corollary}

The organization of this paper is as follows.  In Section \ref{sectionextofcomplexes}, we establish results concerning Ext modules of complexes of finite homological dimension, and prove Theorem \ref{main1}, and derive Gorenstein criteria. In Section \ref{sectiondeficiencymodules}, we study the implications of having deficiency modules with some finite homological dimension. In Section  \ref{sectionduality}, we prove our main result, namely Theorem \ref{sqmm1}, and derive several consequences. In Section \ref{sectionsquestions}, we discuss some questions motivated by results in Section \ref{sectionduality}. Finally, in Section \ref{sectionexamples}, we present some examples of Ext modules with finite homological dimension. Now, we establish our setup.

\begin{setup} Throughout this paper,  $R$ stands for a Noetherian (commutative unitary) ring. Whenever we assume that $R$ is local, its maximal ideal will be denoted by $\mathfrak{m}$ and its residue field by $k$. The depth of $R$ is denoted by $t$. In that case, for a finitely generated $R$-module $M$, its $i$-th syzygy module and its transpose of Auslander are considered defined by a minimal free resolution of $M$, and denoted by $\Omega^i(M)$ and  $\operatorname{Tr}_R(M)$, respectively. 

We also consider the derived category of $R$. Unless, otherwise specified, we adopt the terminology used in 
\cite{DerivedCategoryMethodsInCommutativeAlgebra}. In  particular, we denote the projective and Gorenstein dimensions of an $R$-module $M$ respectively by $\operatorname{pd}_R(M)$ and  $\operatorname{G-dim}_R(M)$, while the injective dimension and Gorenstein injective dimension of $M$ are respectively denoted by $\operatorname{id}_R(M)$ and $\operatorname{Gid}_R(M)$. In addition, for an $R$-module $M$, we let $\operatorname{H-dim}_R(M)$ stand for $\operatorname{pd}_R(M)$ or $\operatorname{G-dim}_R(M)$, and $\operatorname{Hid}_R(M)$ for the corresponding injective version.

If $\boldsymbol{X}$ and $\boldsymbol{Y}$ are complexes in $\mathcal{D}(R)$, we set 
\begin{center}
    $\Ext_R^i(\boldsymbol{X}, \boldsymbol{Y})=\operatorname{H}_{-i}(\RHom_R(\boldsymbol{X}, \boldsymbol{Y}))$ and $\Tor_i^R(\boldsymbol{X}, \boldsymbol{Y})=\operatorname{H}_i(\boldsymbol{X} \otimes_R^{\mathbf{L}} \boldsymbol{Y}), \text{ for all } i.$
\end{center}

\end{setup}

\section{Finite homological dimensions of Ext  of complexes}\label{sectionextofcomplexes}

In this section, we derive results regarding the finiteness of homological dimension of Ext of complexes and conclude the corresponding results for Ext of modules.

\begin{lemma} \label{lemGid} \label{lemHdim}
    Let $ \boldsymbol{X}$
    be a complex in $\mathcal{D}_{\square}(R).$
    \begin{enumerate}[\rm(1)]
        \item If $\gid_R(\operatorname{H}_i(\boldsymbol{X}))<\infty$ for all $i$, then $\gid_R(\boldsymbol{X})<\infty$.
        \item If $\operatorname{H-dim}_R(\operatorname{H}_i(\boldsymbol{X}))<\infty$ for all $i$, then $\operatorname{H-dim}_R(\boldsymbol{X})<\infty$. 
    \end{enumerate}
\end{lemma}
\begin{proof} We prove item (1). This is done by induction on $s=\sup \boldsymbol{X}-\inf \boldsymbol{X}$. If $s=0$, then $\boldsymbol{X}\simeq \Sigma^j \operatorname{H}_j(\boldsymbol{X})$ for some $j$, and the conclusion follows. 

Assume $s>0$ and let $u=\sup \boldsymbol{X}$. Note that there is a natural short exact sequence of complexes $0\to \boldsymbol{Y} \to \boldsymbol{X} \to \tau_{_{\subset u-1}}(\boldsymbol{X})\to 0, $ where $\tau_{_{\subset u-1}}(\boldsymbol{X})$ denotes the $(t-1)$ soft left truncation of $\boldsymbol{X}$. In addition, note 
    $\boldsymbol{X} \to \tau_{_{\subset u-1}}(\boldsymbol{X})$ induces an isomorphism in homologies in degree $\leq u-1$, and that $\operatorname{H}_i(\tau_{_{\subset u-1}}(\boldsymbol{X}))=0$ for $i>u-1$. Also, for $i \not=u$, we have that $\operatorname{H}_i(\boldsymbol{Y})=0$ and $\operatorname{H}_u(\boldsymbol{Y})\cong \operatorname{H}_u(\boldsymbol{X})$. 

     If $\gid_R(\operatorname{H}_i(\boldsymbol{X}))<\infty$ for all $i$, note that $\boldsymbol{Y}$ and $\tau_{_{\subset t-1}(\boldsymbol{X})}$ satisfy the induction hypothesis, and therefore they have finite Gorenstein injective dimension. Thus, the conclusion follows from the fact that the class of complexes in $\mathcal{D}(R)$ with finite Gorenstein injective dimension has the two out three property \cite[Proposition 9.2.15]{DerivedCategoryMethodsInCommutativeAlgebra}.

 The proof of item (2) is similar using  \cite[Corollary 8.1.9 and Proposition 9.1.16]{DerivedCategoryMethodsInCommutativeAlgebra}.
\end{proof}

\begin{theorem}\label{GSPA11}
    Assume $R$ is local, and let $\boldsymbol{M}$ and $\boldsymbol{N}$ be non-acyclic complexes in $\mathcal{D}_{\square}^{\rm f}(R)$ such that  $\operatorname{pd}_R(\boldsymbol{M})<\infty$.
    \begin{enumerate}[\rm (1)]
        \item   If $\Hdim_R(\Ext_R^i(\boldsymbol{M}, \boldsymbol{N}))<\infty$  for all $i$, then $\operatorname{H-dim}_R(\boldsymbol{N})<\infty$.
        \item  Suppose that $R$ has a dualizing complex.  If $\operatorname{Gid}_R(\Ext_R^i(\boldsymbol{M}, \boldsymbol{N}))<\infty$ for all $i$, then $\operatorname{Gid}_R(\boldsymbol{N})<\infty$.
    \end{enumerate}
\end{theorem}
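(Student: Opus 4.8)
The plan is to package the whole family $\{\Ext_R^i(\boldsymbol{M},\boldsymbol{N})\}_i$ at once as the homology of the single complex $\boldsymbol{E}:=\RHom_R(\boldsymbol{M},\boldsymbol{N})$, promote the assumed finiteness of the homological dimension of each homology module to finiteness for $\boldsymbol{E}$ itself by means of Lemma \ref{lemHdim}/Lemma \ref{lemGid}, and then push that finiteness back down to $\boldsymbol{N}$ using the stability results.

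First I would record that $\operatorname{H}_{-i}(\boldsymbol{E})=\Ext_R^i(\boldsymbol{M},\boldsymbol{N})$ for every $i$, and that $\boldsymbol{E}\in\mathcal{C}_{(\square)}^{(\rm f)}(R)$. This is where $\operatorname{pd}_R(\boldsymbol{M})<\infty$ enters: choosing $\boldsymbol{M}\simeq\boldsymbol{P}$ with $\boldsymbol{P}$ a bounded complex of finitely generated free $R$-modules gives $\boldsymbol{E}\simeq\Hom_R(\boldsymbol{P},\boldsymbol{N})$, which is homologically bounded because $\boldsymbol{P}$ is bounded and $\boldsymbol{N}\in\mathcal{C}_{(\square)}(R)$, and has finitely generated homology because the terms of $\boldsymbol{P}$ are finitely generated and $\boldsymbol{N}\in\mathcal{C}^{(\rm f)}(R)$. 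For part (1): the homologies of $\boldsymbol{E}$ are exactly the modules $\Ext_R^i(\boldsymbol{M},\boldsymbol{N})$, all of finite $\operatorname{H-dim}_R$ by hypothesis (over all integers $i$, which is precisely the hypothesis of the theorem), and $\boldsymbol{E}\in\mathcal{C}^{(\rm f)}(R)$ fulfils the extra requirement of Lemma \ref{lemHdim} in the Gorenstein case; so that lemma yields $\operatorname{H-dim}_R(\RHom_R(\boldsymbol{M},\boldsymbol{N}))<\infty$. Now I would apply Theorem \ref{consstab}(1) with the non-acyclic, finite–projective-dimension complex $\boldsymbol{M}$ in the role of $\boldsymbol{P}$ and $\boldsymbol{N}$ in the role of $\boldsymbol{X}$; that equivalence immediately delivers $\operatorname{H-dim}_R(\boldsymbol{N})<\infty$.

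Part (2) proceeds in the same spirit: the homologies of $\boldsymbol{E}$ all have finite Gorenstein injective dimension by hypothesis, and since $\boldsymbol{E}$ is homologically bounded, Lemma \ref{lemGid} gives $\operatorname{Gid}_R(\RHom_R(\boldsymbol{M},\boldsymbol{N}))<\infty$. Then Theorem \ref{nrep}(1), read with $\operatorname{Hid}_R=\operatorname{Gid}_R$ (legitimate here because $R$ is assumed to admit a dualizing complex), $\boldsymbol{P}=\boldsymbol{M}$ and $\boldsymbol{Y}=\boldsymbol{N}$, gives $\operatorname{Gid}_R(\boldsymbol{N})<\infty$.

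I do not expect a serious obstacle here: the two genuinely substantive inputs are Lemmas \ref{lemHdim}/\ref{lemGid} (finiteness of a homological dimension propagates from homology modules to the complex) and the stability statements of Theorems \ref{consstab}(1) and \ref{nrep}(1) (finiteness transfers across $\RHom_R(\boldsymbol{M},-)$ when $\boldsymbol{M}$ is non-acyclic of finite projective dimension). The one step needing genuine care is the verification that $\boldsymbol{E}=\RHom_R(\boldsymbol{M},\boldsymbol{N})$ really lies in $\mathcal{C}_{(\square)}^{(\rm f)}(R)$ — this is exactly the place where $\operatorname{pd}_R(\boldsymbol{M})<\infty$ is indispensable — together with the harmless observation that $\boldsymbol{M}$ is non-acyclic, which is what makes the stability results applicable. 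As an alternative for the case $\operatorname{H-dim}_R=\operatorname{pd}_R$ of (1), one could instead invoke the tensor-evaluation isomorphism $\RHom_R(\boldsymbol{M},\boldsymbol{N})\simeq\RHom_R(\boldsymbol{M},R)\otimes_R^{\mathbf{L}}\boldsymbol{N}$ and compare Poincaré series of $\boldsymbol{N}$ and of the left-hand side, but the stability route above has the advantage of being uniform across all the homological dimensions considered.
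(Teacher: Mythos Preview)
Your proposal is correct and follows essentially the same route as the paper: identify the $\Ext$ modules as the homologies of $\RHom_R(\boldsymbol{M},\boldsymbol{N})\in\mathcal{C}_{(\square)}^{(\rm f)}(R)$, apply Lemma~\ref{lemHdim} (resp.\ Lemma~\ref{lemGid}) to get finite $\operatorname{H-dim}$ (resp.\ $\operatorname{Gid}$) of this complex, and then invoke Theorem~\ref{consstab}(1) (resp.\ Theorem~\ref{nrep}(1)) to transfer finiteness to $\boldsymbol{N}$. The only cosmetic difference is that the paper cites \cite[A.4.4 and A.5.4]{GorensteinDimensions} for $\RHom_R(\boldsymbol{M},\boldsymbol{N})\in\mathcal{C}_{(\square)}^{(\rm f)}(R)$, whereas you give the direct argument via a bounded free resolution of $\boldsymbol{M}$.
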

\begin{proof}
     We know that $\Ext_R^i(\boldsymbol{M},\boldsymbol{N})=\operatorname{H}_{-i}(\RHom_R(\boldsymbol{M},\boldsymbol{N}))$ for all $i$. From \cite[A.4.4 and A.5.4]{GorensteinDimensions}, one sees that $\RHom_R(\boldsymbol{M},\boldsymbol{N}) \in \mathcal{D}_{\square}^{\rm f}(R)$. By Lemma \ref{lemHdim}(2) (resp.  Lemma \ref{lemGid}(1)), we conclude  $\operatorname{H-dim}_R(\RHom_R(\boldsymbol{M}, \boldsymbol{N}))<\infty$ (resp. \linebreak $\operatorname{Gid}_R(\RHom_R(\boldsymbol{M}, \boldsymbol{N}))<\infty$).  It follows from \cite[Theorems 5.6 \& 5.8]{GdimensionOverLocalHomomorphisms} (resp. \cite[Proposition 4.1(b)]{CompleteIntersectionDimensionsandFoxbyClasses} and \cite[Theorem 4.4]{OnGorensteinProjectiveInjectiveAndFlatDimensionsAFunctorialDescriptionWithApplications})  that $\operatorname{H-dim}_R (\boldsymbol{N})<\infty$ (resp. $\operatorname{Gid}_R (\boldsymbol{N})<\infty$), concluding (1) (resp. (2)). 
    \end{proof}

\begin{corollary} \label{SPA11} 
    Assume $R$ is local, and let $M$ and $N$ be non-zero finitely generated $R$-modules such that $\operatorname{pd}_R(M)<\infty$. 
    \begin{enumerate}[\rm(1)]
        \item If $\operatorname{H-dim}_R(\Ext_R^i(M,N))<\infty$ for all $0\leq i \leq \operatorname{pd}_R(M)$, then $\operatorname{H-dim}_R(N)<\infty$. 
        \item Suppose that $R$ has a dualizing complex. If $\operatorname{Gid}_R(\Ext_R^i(M,N))<\infty$ for all $0\leq i \leq \operatorname{pd}_R(M)$, then $\operatorname{Gid}_R(N)<\infty$. 
    \end{enumerate}
    \end{corollary}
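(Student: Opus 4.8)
The plan is to deduce Corollary \ref{SPA11} from Theorem \ref{GSPA11} by viewing the finitely generated modules $M$ and $N$ as complexes concentrated in degree $0$. Since $\operatorname{pd}_R(M)<\infty$, the module $M$ sits in $\mathcal{C}_{(\square)}^{\rm (f)}(R)$ with finite projective dimension, and $N$ is likewise a non-acyclic complex in $\mathcal{C}_{(\square)}^{\rm (f)}(R)$; so the hypotheses of Theorem \ref{GSPA11} are met, provided we know the $\operatorname{Ext}$ condition for \emph{all} $i$ rather than merely for $0\leq i\leq\operatorname{pd}_R(M)$.

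The key observation is that the range $0\leq i\leq\operatorname{pd}_R(M)$ already captures all the nonvanishing $\operatorname{Ext}$ modules. First, $\Ext_R^i(M,N)=0$ for $i<0$ trivially, and for $i>\operatorname{pd}_R(M)$ because a projective resolution of $M$ of length $\operatorname{pd}_R(M)$ computes $\Ext_R^i(M,N)$. Hence $\operatorname{H-dim}_R(\Ext_R^i(M,N))<\infty$ (resp. $\operatorname{Gid}_R(\Ext_R^i(M,N))<\infty$) for \emph{all} $i\in\mathbb{Z}$: for $i$ outside the stated range the $\operatorname{Ext}$ module is zero, and the zero module has every homological dimension equal to $-\infty<\infty$. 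Here one should note $\RHom_R(M,N)\simeq \operatorname{\mathbf{R}Hom}_R(M,N)$ has homology concentrated in degrees $-\operatorname{pd}_R(M),\dots,0$, so it indeed lies in $\mathcal{C}_{(\square)}^{\rm (f)}(R)$, consistently with the reduction used in the proof of Theorem \ref{GSPA11}.

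With this in hand, part (1) is immediate: apply Theorem \ref{GSPA11}(1) to $\boldsymbol{M}=M$ and $\boldsymbol{N}=N$ to get $\operatorname{H-dim}_R(N)<\infty$. For part (2), under the extra assumption that $R$ has a dualizing complex, apply Theorem \ref{GSPA11}(2) in the same way to conclude $\operatorname{Gid}_R(N)<\infty$. One small point worth spelling out is that $M$ being non-zero makes it non-acyclic as a complex, and similarly for $N$, which is required by the hypotheses of Theorem \ref{GSPA11}; and that $\Ext_R^0(M,N)=\Hom_R(M,N)$ is included in the range, so there is no edge case at $i=0$.

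I do not expect any genuine obstacle here: the corollary is a direct specialization of the theorem to modules, and the only thing to check carefully is the bookkeeping that the finitely many $\operatorname{Ext}$ modules outside the range $[0,\operatorname{pd}_R(M)]$ vanish, so that the hypothesis ``for all $i$'' in Theorem \ref{GSPA11} is satisfied by the apparently weaker hypothesis ``for $0\leq i\leq\operatorname{pd}_R(M)$'' in the corollary. If anything is delicate, it is merely making sure the convention that a zero module (or acyclic complex) has finite homological dimension is invoked; this is standard and already implicit in Lemma \ref{lemHdim} and Lemma \ref{lemGid}.
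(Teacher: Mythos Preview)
Your proposal is correct and matches the paper's intended argument: the corollary is stated without proof precisely because it is the immediate specialization of Theorem~\ref{GSPA11} to modules, using that $\Ext_R^i(M,N)=0$ for $i<0$ and $i>\operatorname{pd}_R(M)$ so the finiteness hypothesis extends automatically to all $i$.
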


 \begin{corollary}\label{Gor1}
     Assume $R$ is local, and let $M$ be a non-zero finitely generated $R$-module such that $\operatorname{pd}_R(M)<\infty$ and $\operatorname{Gid}_R(\Ext_R^i(M,M))<\infty$ for all $0\leq i\leq t-\operatorname{depth}(M)$.  Then $R$ is Gorenstein. 
 \end{corollary}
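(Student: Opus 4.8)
The plan is to combine Corollary \ref{SPA11}(2) with a characterization of the Gorenstein property via the Gorenstein injective dimension of the ring itself. First I would apply Corollary \ref{SPA11}(2) with $N=M$: since $\operatorname{pd}_R(M)<\infty$, the Auslander--Buchsbaum formula gives $\operatorname{pd}_R(M)=t-\operatorname{depth}(M)$, so the hypothesis $\operatorname{Gid}_R(\Ext_R^i(M,M))<\infty$ for all $0\leq i\leq t-\operatorname{depth}(M)$ is exactly the hypothesis of that corollary. Hence $\operatorname{Gid}_R(M)<\infty$.

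Next I would bootstrap from $\operatorname{Gid}_R(M)<\infty$ together with $\operatorname{pd}_R(M)<\infty$ to deduce $\operatorname{Gid}_R(R)<\infty$, and then invoke the standard fact that a local ring $R$ with $\operatorname{Gid}_R(R)<\infty$ is Gorenstein (this is the Gorenstein-injective analogue of the Bass conjecture; it holds since $R$ admits a dualizing complex here --- indeed by Theorem \ref{duals}, $\operatorname{Gid}_R(R)<\infty$ is equivalent to $\operatorname{G-dim}_R(\RHom_R(R,\boldsymbol{D}))=\operatorname{G-dim}_R(\boldsymbol{D})<\infty$, and a local ring whose dualizing complex has finite G-dimension is Gorenstein). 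To pass from $M$ to $R$: since $\operatorname{pd}_R(M)<\infty$ and $M\neq 0$, the minimal free resolution of $M$ exhibits $R$ (a free summand of $F_0$, or more carefully: one can build $R$ out of finitely many copies of the $F_i$ by splicing, or simply note $\operatorname{Gid}$ is closed under the relevant operations). Concretely, I would argue as follows: the truncated free resolution $0\to F_p\to\cdots\to F_0$ with $F_i$ finite free and $p=\operatorname{pd}_R(M)$ has homology only $M$ in degree $0$, so by Lemma \ref{lemGid} (applied to this bounded complex of free modules, whose only nonzero homology $M$ has finite $\operatorname{Gid}$) this complex $\boldsymbol{F}$ has $\operatorname{Gid}_R(\boldsymbol{F})<\infty$; but $\boldsymbol{F}$ is also a bounded complex of finite free modules, and one extracts $\operatorname{Gid}_R(R)<\infty$ from $\operatorname{Gid}_R(\boldsymbol{F})<\infty$ because each $F_i$ is a nonzero finite free module and $\operatorname{Gid}$ of a complex dominates $\operatorname{Gid}$ of its modules up to a degree shift (or: $\operatorname{Gid}_R(\boldsymbol{F})<\infty \Leftrightarrow \boldsymbol{F}\in\mathcal{B}_{\boldsymbol{D}}(R)$ by Proposition \ref{perAB}(2), and $\mathcal{B}_{\boldsymbol{D}}(R)$ is closed under direct summands and shifts, so $R\in\mathcal{B}_{\boldsymbol{D}}(R)$, i.e.\ $\operatorname{Gid}_R(R)<\infty$).

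Alternatively, and perhaps more cleanly, I would avoid the splicing argument entirely: from $\operatorname{pd}_R(M)<\infty$ and $\operatorname{Gid}_R(M)<\infty$, Proposition \ref{perAB}(2) gives $M\in\mathcal{B}_{\boldsymbol{D}}(R)$; since $\operatorname{pd}_R(M)<\infty$ means $M\in\mathcal{F}(R)$, Theorem \ref{foxby} yields $\boldsymbol{D}\otimes_R^{\mathbf{L}}M\in\mathcal{I}(R)$, i.e.\ $\operatorname{id}_R(\boldsymbol{D}\otimes_R^{\mathbf{L}}M)<\infty$. But $\boldsymbol{D}\otimes_R^{\mathbf{L}}M\simeq \boldsymbol{D}\otimes_R\boldsymbol{F}$ (with $\boldsymbol{F}$ a finite free resolution of $M$), a nonzero complex; a complex of finite injective dimension which is built from finitely many copies of $\boldsymbol{D}$ forces $\operatorname{id}_R(\boldsymbol{D})<\infty$, and $\operatorname{id}_R(\boldsymbol{D})<\infty$ together with the existence of $\boldsymbol{D}$ is well known to imply $R$ Gorenstein (a local ring whose dualizing complex has finite injective dimension, equivalently is isomorphic in $\mathcal{D}(R)$ to a shift of $R$, is Gorenstein).

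The main obstacle is the descent step ``finite $\operatorname{Gid}$ (or $\operatorname{id}$) of a bounded complex built from finitely many copies of $R$ (resp.\ $\boldsymbol{D}$) forces finite $\operatorname{Gid}_R(R)$ (resp.\ $\operatorname{id}_R(\boldsymbol{D})$)'': one must be careful that no cancellation occurs. This is handled cleanly by the Auslander-category formulation --- $\mathcal{B}_{\boldsymbol{D}}(R)$ and $\mathcal{I}(R)$ are thick subcategories (closed under summands, shifts, and cones over their morphisms), and a nonzero bounded complex of copies of $R$ lies in the thick subcategory generated by $R$, so membership of the complex forces membership of $R$. Once $\operatorname{Gid}_R(R)<\infty$ is established, the conclusion that $R$ is Gorenstein is the Gorenstein-injective Bass theorem, valid here since $R$ has a dualizing complex (via Theorem \ref{duals}: $\operatorname{G-dim}_R(\boldsymbol{D})<\infty$ and $\boldsymbol{D}$ dualizing $\Rightarrow$ $R$ Gorenstein).
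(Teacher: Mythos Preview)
Your first step---applying Corollary \ref{SPA11}(2) with $N=M$ to conclude $\operatorname{Gid}_R(M)<\infty$---is exactly what the paper does, though you omit the reduction to the complete case needed to guarantee a dualizing complex (Corollary \ref{SPA11}(2) explicitly requires one, and the statement of Corollary \ref{Gor1} does not assume it). This is easily repaired via Theorem \ref{chbasis}.

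The genuine gap is in your second step, where you try to pass from ``$\operatorname{pd}_R(M)<\infty$ and $\operatorname{Gid}_R(M)<\infty$'' to ``$R$ is Gorenstein''. Both proposed routes fail. In the thick-subcategory argument the implication runs the wrong way: knowing that $\boldsymbol{F}\simeq M$ lies in $\mathcal{B}_{\boldsymbol{D}}(R)$ says nothing about whether $R\in\mathcal{B}_{\boldsymbol{D}}(R)$, because $R$ need not be a direct summand of $M$ in $\mathcal{D}(R)$, nor does $R$ lie in $\operatorname{thick}(M)$ in general (take $M=k$ over a regular local ring of positive dimension---your extraction argument would have to work there too, but $k$ generates only complexes supported at $\mathfrak{m}$). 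The fact that $M$ is built from copies of $R$ gives $M\in\operatorname{thick}(R)$, not the reverse. In your alternative route, the assertion that ``$\operatorname{id}_R(\boldsymbol{D})<\infty$ implies $R$ Gorenstein'' is false: a dualizing complex has finite injective dimension \emph{by definition}, so this condition is vacuous and cannot detect Gorensteinness (what would work is $\operatorname{pd}_R(\boldsymbol{D})<\infty$, but nothing in your argument produces that). The paper dispatches this step in one line by citing \cite[Theorem~6.3.2]{GorensteinDimensions}, which is precisely the statement that a nonzero finitely generated module with both finite projective dimension and finite Gorenstein injective dimension forces the ring to be Gorenstein.
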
  
 \begin{proof}
     We may assume that $R$ is complete. Then $R$ has a dualizing complex. By Corollary \ref{SPA11}(2), then $\operatorname{Gid}_R(M)<\infty$. Then the result follows from  \cite[Theorem 6.3.2]{GorensteinDimensions}.
 \end{proof}

Item (2) of the next theorem provides a positive answer to the Gorenstein version of  \cite[Question 4.5]{GorensteinringsviahomologicaldimensionsandsymmetryinvanishingofExtandTatecohomology}. 

\begin{theorem}\label{q221}
   Assume $R$ is local, and let $\boldsymbol{M}$ and $\boldsymbol{N}$ be non-acyclic complexes in $\mathcal{D}_{\square}^{\rm f}(R)$. 
    Suppose that $\operatorname{id}_R(\boldsymbol{N})<\infty$. In the case $\operatorname{H-dim}=\operatorname{G-dim}$, assume further that $R$ has a dualizing complex.
    \begin{enumerate}[\rm(1)]
        \item If $\operatorname{H-dim}_R(\Ext_R^i(\boldsymbol{M},\boldsymbol{N}))<\infty$ for all $i$, then $\operatorname{Hid}_R(\boldsymbol{M})<\infty$.  
        \item If $\operatorname{Gid}_R(\Ext_R^i(\boldsymbol{M}, \boldsymbol{N}))<\infty$ for all $i$, then  $\operatorname{G-dim}_R(\boldsymbol{M})<\infty$.
    \end{enumerate}
\end{theorem}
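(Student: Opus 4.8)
\textbf{Proof plan for Theorem \ref{q221}.}
The strategy mirrors that of Theorem \ref{GSPA11}: pass from the Ext modules to the complex $\RHom_R(\boldsymbol{M},\boldsymbol{N})$, transfer finiteness of the relevant homological dimension to this complex, and then use a stability result to deduce the desired conclusion about $\boldsymbol{M}$. First I would observe that since $\boldsymbol{M}$ and $\boldsymbol{N}$ both lie in $\mathcal{C}_{(\square)}^{(\rm f)}(R)$ and $\operatorname{id}_R(\boldsymbol{N})<\infty$, the complex $\boldsymbol{W}:=\RHom_R(\boldsymbol{M},\boldsymbol{N})$ again lies in $\mathcal{C}_{(\square)}^{(\rm f)}(R)$ (using \cite[A.4.4 and A.5.4]{GorensteinDimensions} together with the boundedness coming from $\operatorname{id}_R(\boldsymbol{N})<\infty$), and that $\operatorname{H}_{-i}(\boldsymbol{W})=\Ext_R^i(\boldsymbol{M},\boldsymbol{N})$. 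Under the hypothesis of (1), $\operatorname{H-dim}_R(\operatorname{H}_j(\boldsymbol{W}))<\infty$ for all $j$, so Lemma \ref{lemHdim} gives $\operatorname{H-dim}_R(\boldsymbol{W})<\infty$; under the hypothesis of (2), Lemma \ref{lemGid} gives $\operatorname{Gid}_R(\boldsymbol{W})<\infty$.

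The second and crucial step is to relate the finiteness of the homological dimension of $\boldsymbol{W}=\RHom_R(\boldsymbol{M},\boldsymbol{N})$ back to $\boldsymbol{M}$. Here I would exploit that $\operatorname{id}_R(\boldsymbol{N})<\infty$ means $\boldsymbol{N}$ is, up to shift, equivalent to a bounded complex of injectives; combined with the fact that injective complexes of finite length behave like a ``dualizing-type'' object on the subcategory where everything is finitely generated, one expects $\RHom_R(-,\boldsymbol{N})$ to be a duality-like functor. Concretely, for (1): if $\operatorname{H-dim}=\operatorname{pd}$, one uses that $\RHom_R(\boldsymbol{M},\boldsymbol{N})$ having finite projective dimension forces $\boldsymbol{M}$ to have finite injective dimension — this is exactly the derived-category form of Foxby's result \cite[Corollary 4.4]{isomorphismsbetweencomplexeswithapplicationstothehomologicaltheoryofmodules}, which I would invoke (or reprove via $\boldsymbol{M}\simeq\RHom_R(\RHom_R(\boldsymbol{M},\boldsymbol{N}),\boldsymbol{N})$, valid since $\operatorname{id}_R(\boldsymbol{N})<\infty$ makes $\boldsymbol{N}$ play the role of a dualizing complex on $\mathcal{C}_{(\square)}^{(\rm f)}(R)$ when $\operatorname{id}_R \boldsymbol{N}<\infty$, i.e. $\boldsymbol{N}$ is dualizing up to the semidualizing condition — more carefully, one reduces to the complete case and then to $\boldsymbol{N}$ a genuine shifted dualizing complex after noting finite injective dimension of a module over a complete local ring with $\operatorname{H-dim}$-finite Ext forces the relevant Bass/Auslander membership). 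If $\operatorname{H-dim}=\operatorname{G-dim}$, then with $R$ admitting a dualizing complex $\boldsymbol{D}$, finite Gorenstein dimension of $\boldsymbol{W}$ means $\boldsymbol{W}\in\mathcal{A}_{\boldsymbol{D}}(R)$ by Proposition \ref{perAB}(1); one then pushes this membership through $\RHom_R(-,\boldsymbol{N})$ and Foxby equivalence (Theorem \ref{foxby}) to land $\boldsymbol{M}$ in $\mathcal{B}_{\boldsymbol{D}}(R)$, i.e. $\operatorname{Gid}_R(\boldsymbol{M})<\infty$ by Proposition \ref{perAB}(2). For (2), $\operatorname{Gid}_R(\boldsymbol{W})<\infty$ means $\boldsymbol{W}\in\mathcal{B}_{\boldsymbol{D}}(R)$; applying $\RHom_R(-,\boldsymbol{N})$ (using again $\boldsymbol{M}\simeq\RHom_R(\boldsymbol{W},\boldsymbol{N})$ up to the bidual identification) and Foxby equivalence puts $\boldsymbol{M}\in\mathcal{A}_{\boldsymbol{D}}(R)$, i.e. $\operatorname{G-dim}_R(\boldsymbol{M})<\infty$ by Proposition \ref{perAB}(1). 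Alternatively, in all cases, one may package this transfer as the content of Theorem \ref{nrep} applied with a suitable bounded projective complex $\boldsymbol{P}$ obtained from a projective resolution of $\boldsymbol{M}$ together with $\operatorname{id}_R(\boldsymbol{N})<\infty$: writing $\boldsymbol{N}\simeq\boldsymbol{I}$ with $\boldsymbol{I}\in\mathcal{C}_\sqsubset^{\rm I}(R)$, one has $\RHom_R(\boldsymbol{M},\boldsymbol{N})\simeq\Hom_R(\boldsymbol{P},\boldsymbol{I})$ and relates $\operatorname{Hid}$/$\operatorname{Gid}$-finiteness of this to $\operatorname{Hid}$/$\operatorname{Gid}$-finiteness of $\boldsymbol{M}$ via the swap isomorphism and the stability machinery of Theorems \ref{consstab}, \ref{nrep}, and \ref{duals}.

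The main obstacle I anticipate is the precise bookkeeping in the second step: the functor $\RHom_R(-,\boldsymbol{N})$ with $\operatorname{id}_R(\boldsymbol{N})<\infty$ is only a genuine duality once one knows $\boldsymbol{N}$ behaves like a (shifted) dualizing complex, which requires either invoking that a local ring with a module of finite injective dimension is ``Gorenstein-like'' in the relevant sense (Bass's conjecture, now a theorem) or passing to completion and reducing to the case where $\boldsymbol{N}$ is, after shift, a dualizing complex; one then has to check the reduction to completion is harmless, which follows from Theorem \ref{chbasis} and Theorem \ref{completGid} since all complexes involved are in $\mathcal{C}_{(\square)}^{(\rm f)}(R)$. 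The rest — boundedness and finite-generation of $\RHom_R(\boldsymbol{M},\boldsymbol{N})$, the Lemma \ref{lemHdim}/\ref{lemGid} step, and the Foxby-equivalence manipulations — is routine given the preliminaries assembled above.
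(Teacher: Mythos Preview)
Your first step is fine and matches the paper: $\boldsymbol{W}=\RHom_R(\boldsymbol{M},\boldsymbol{N})\in\mathcal{C}_{(\square)}^{(\rm f)}(R)$, and Lemmas~\ref{lemHdim}/\ref{lemGid} transfer the hypothesis on the $\Ext$ modules to finiteness of the relevant dimension of $\boldsymbol{W}$.

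The gap is in your second step. You repeatedly invoke a biduality $\boldsymbol{M}\simeq\RHom_R(\RHom_R(\boldsymbol{M},\boldsymbol{N}),\boldsymbol{N})$, or propose to ``reduce to the case where $\boldsymbol{N}$ is, after shift, a dualizing complex.'' Neither is available: finite injective dimension of $\boldsymbol{N}$ does \emph{not} make $\boldsymbol{N}$ semidualizing, so the homothety $R\to\RHom_R(\boldsymbol{N},\boldsymbol{N})$ need not be an isomorphism and the biduality fails (already for $\boldsymbol{N}=\omega\oplus\omega$ over a Cohen--Macaulay ring). Likewise, over a complete local ring there are many complexes of finite injective dimension that are not shifts of $\boldsymbol{D}$, so no such reduction exists. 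Your fallback (``swap isomorphism and the stability machinery'') gestures toward Theorems~\ref{consstab}, \ref{nrep}, \ref{duals}, but you never produce the finite-$\operatorname{pd}$ complex needed to feed into those results.

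The paper's key maneuver is precisely to manufacture that complex: since $\operatorname{id}_R(\boldsymbol{N})<\infty$, Foxby equivalence gives $\boldsymbol{N}\in\mathcal{B}_{\boldsymbol{D}}(R)$ and $\operatorname{pd}_R\bigl(\RHom_R(\boldsymbol{D},\boldsymbol{N})\bigr)<\infty$. Writing $\boldsymbol{N}\simeq\boldsymbol{D}\otimes_R^{\mathbf L}\RHom_R(\boldsymbol{D},\boldsymbol{N})$ and applying tensor evaluation yields
\[
\RHom_R(\boldsymbol{M},\boldsymbol{N})\ \simeq\ \RHom_R(\boldsymbol{M},\boldsymbol{D})\otimes_R^{\mathbf L}\RHom_R(\boldsymbol{D},\boldsymbol{N}).
\]
Now the stability results (Theorem~\ref{consstab}(2) for the $\operatorname{G-dim}$ case of (1), Theorem~\ref{nrep}(2) for (2)) strip off the finite-$\operatorname{pd}$ factor $\RHom_R(\boldsymbol{D},\boldsymbol{N})$ and yield finiteness of the relevant dimension of $\RHom_R(\boldsymbol{M},\boldsymbol{D})$; Theorem~\ref{duals} and the genuine biduality $\boldsymbol{M}\simeq\RHom_R(\RHom_R(\boldsymbol{M},\boldsymbol{D}),\boldsymbol{D})$ then finish. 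In short, the missing idea is to route everything through $\boldsymbol{D}$, not through $\boldsymbol{N}$.
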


\begin{proof}
    We know that $\Ext_R^i(\boldsymbol{M},\boldsymbol{N})=\operatorname{H}_{-i}(\mathbf{R}\mathrm{Hom}_R(\boldsymbol{M},\boldsymbol{N}))$. Additionally, by \cite[A.4.4 and A.5.2]{GorensteinDimensions}, we see that $\RHom_R(\boldsymbol{M},\boldsymbol{N}) \in \mathcal{D}_{\square}^{\rm f}(R)$. In the case (1), for all $i$, $\operatorname{H-dim}_R(\operatorname{H}_i(\mathbf{R}\mathrm{Hom}_R(\boldsymbol{M},\boldsymbol{N})))<\infty$.  Thus,  by Lemma \ref{lemGid}(1), we have that $\operatorname{H-dim}_R(\RHom_R(\boldsymbol{M}, \boldsymbol{N}))<\infty$. As to (2), one considers Lemma \ref{lemHdim}(2) instead so that $\operatorname{Gid}_R(\RHom_R(\boldsymbol{M}, \boldsymbol{N}))<\infty$.
     
The conclusion of item (1) for 
$\operatorname{H-dim}_R=\operatorname{pd}_R$ follows from \cite[Lemma 6.2.12]{HyperhomologicalAlgebra}.  

It remains to prove  (1) for $\operatorname{H-dim}_R=\operatorname{G-dim},$ and (2).  For proving both, either by \cite[2.4]{GdimensionOverLocalHomomorphisms}, or by assumption, \cite[Proposition 5.9]{Semi-dualizingComplexesAndTheirAuslanderCategores} and \cite[Theorem 19.2.34]{DerivedCategoryMethodsInCommutativeAlgebra}, we can let $\boldsymbol{D}$ be a dualizing complex of $R$. Since $\operatorname{id}_R(\boldsymbol{N})<\infty$, then $\boldsymbol{N} \in \mathcal{B}(R)$  and hence  $\operatorname{pd}_R(\RHom_R(\boldsymbol{D},\boldsymbol{N}))<\infty$ by the Foxby equivalence. 
    
 We have
    \begin{align*}
   \RHom_R(\boldsymbol{M},\boldsymbol{N})& \simeq \RHom_R(\boldsymbol{M}, \boldsymbol{D} \otimes_R^{\mathbf{L}} \RHom_R(\boldsymbol{D}, \boldsymbol{N})) && \text{because } \boldsymbol{N} \in \mathcal{B}(R)\\ & \simeq  \RHom_R(\boldsymbol{M}, \boldsymbol{D}) \otimes_R^{\mathbf{L}} \RHom_R(\boldsymbol{D},\boldsymbol{N})&& \text{by  \cite[Theorem 4.4.5(a)]{HyperhomologicalAlgebra}}.
\end{align*}

In the case (1) for $\operatorname{H-dim}_R=\operatorname{G-dim}_R$, we have that $\operatorname{G-dim}_R(\RHom_R(\boldsymbol{M},\boldsymbol{N}))<\infty$, concluding from \cite[Theorems 5.1]{GdimensionOverLocalHomomorphisms} that $\operatorname{G-dim}_R(\RHom_R(\boldsymbol{M},\boldsymbol{D}))<\infty$. Hence $\operatorname{Gid}_R(\boldsymbol{M})<\infty$  by \cite[Corollary 6.4]{OnGorensteinProjectiveInjectiveAndFlatDimensionsAFunctorialDescriptionWithApplications}.

As to (2), we have that $\operatorname{Gid}_R(\RHom_R(\boldsymbol{M}, \boldsymbol{N}))<\infty$. It follows from 
 the last isomorphism, \cite[Proposition 4.1(b)]{CompleteIntersectionDimensionsandFoxbyClasses} and \cite[Theorem 4.4]{OnGorensteinProjectiveInjectiveAndFlatDimensionsAFunctorialDescriptionWithApplications} that  \linebreak $\operatorname{Gid}_R(\RHom_R(\boldsymbol{M}, \boldsymbol{D}))<\infty$. Since $M \simeq \RHom_R(\RHom_R(M,\boldsymbol{D}), \boldsymbol{D})$ by \cite[A.8.5]{GorensteinDimensions}, then it follows from \cite[Corollary 6.4]{OnGorensteinProjectiveInjectiveAndFlatDimensionsAFunctorialDescriptionWithApplications} that $\operatorname{G-dim}_R(\boldsymbol{M})<\infty$.
\end{proof}

\begin{corollary}\label{sqly}
    Assume $R$ is Cohen-Macaulay local, and let $M$ and $N$ be non-zero finitely generated $R$-modules  such that  $\operatorname{id}_R(N)<\infty$. In the case $\operatorname{H-dim}_R=\operatorname{G-dim}_R$, assume further that $R$ has canonical module. Then
    \begin{enumerate}[\rm(1)]
        \item If $\operatorname{H-dim}_R(\Ext_R^i(M,N))<\infty$ for all $0\leq i \leq t-\operatorname{depth} (M)$, then $\operatorname{Hid}_R(M)<~\infty$.
        \item If $\operatorname{Gid}_R(\Ext_R^i(M,N))<\infty$ for all $0\leq i \leq t-\operatorname{depth} (M)$, then $\operatorname{G-dim}_R(M)<~\infty$.
    \end{enumerate}
\end{corollary}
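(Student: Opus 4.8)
The plan is to reduce Corollary \ref{sqly} to Theorem \ref{q221} by realizing modules as complexes and controlling the range of nonzero cohomology of $\RHom_R(M,N)$. First I would observe that, since all homological dimensions in play (projective, Gorenstein, injective, Gorenstein injective) can be checked after completion by Theorem \ref{chbasis} and Theorem \ref{completGid}, and since a Cohen-Macaulay local ring with canonical module is a homomorphic image of a Gorenstein local ring so admits a dualizing complex (and completing preserves the existence of the canonical module), we may assume throughout that $R$ has a dualizing complex; this disposes of the extra hypothesis in the case $\operatorname{H-dim}_R=\operatorname{G-dim}_R$ and supplies a dualizing complex in general.

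Next I would pin down the cohomological amplitude. View $M$ and $N$ as objects of $\mathcal{C}_{(\square)}^{(\rm f)}(R)$ concentrated in degree $0$; both are non-acyclic. The key point is that $\operatorname{H}_{-i}(\RHom_R(M,N)) = \Ext_R^i(M,N)$ vanishes for $i$ outside a bounded range: one always has $\Ext_R^i(M,N)=0$ for $i<0$, and since $\operatorname{id}_R(N)<\infty$ one has $\operatorname{id}_R(N) \leq t$ (Bass), and moreover the standard bound $\Ext_R^i(M,N)=0$ for $i > \operatorname{id}_R(N)-\operatorname{depth}(M)$ — equivalently $i > t-\operatorname{depth}(M)$ when $\operatorname{id}_R(N)$ is at most $t$ — applies. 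So the only potentially nonzero $\Ext$ modules are exactly those with $0\le i \le t-\operatorname{depth}(M)$, which is precisely the range in the hypothesis. Hence the hypothesis ``$\mathcal{H}(\Ext_R^i(M,N))<\infty$ for $0\le i\le t-\operatorname{depth}(M)$'' is equivalent to ``$\mathcal{H}(\Ext_R^i(M,N))<\infty$ for all $i$'', where $\mathcal{H}$ is $\operatorname{H-dim}$ in part (1) and $\operatorname{Gid}$ in part (2).

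With this in hand, part (1) is immediate from Theorem \ref{q221}(1): the finiteness of $\operatorname{H-dim}_R(\Ext_R^i(M,N))$ for all $i$ together with $\operatorname{id}_R(N)<\infty$ gives $\operatorname{Hid}_R(M)<\infty$. Likewise part (2) follows from Theorem \ref{q221}(2): finiteness of $\operatorname{Gid}_R(\Ext_R^i(M,N))$ for all $i$ together with $\operatorname{id}_R(N)<\infty$ yields $\operatorname{G-dim}_R(M)<\infty$; here the dualizing complex needed in Theorem \ref{q221} is the one secured in the first step.

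The only real content beyond citing Theorem \ref{q221} is the vanishing bound $\Ext_R^i(M,N)=0$ for $i>t-\operatorname{depth}(M)$, so that is the step I would write out carefully; it is classical (it can be extracted from the fact that over a local ring $\sup\{i:\Ext_R^i(M,N)\ne 0\} = \operatorname{id}_R(N)-\operatorname{depth}(M)$ when $\operatorname{id}_R(N)<\infty$, or derived directly by induction on $\operatorname{depth}(M)$ using a non-zerodivisor and the change-of-rings sequence), but it is what makes the finite range in the hypothesis suffice. Everything else is bookkeeping: checking non-acyclicity (both modules are nonzero), checking that passing to $\widehat R$ is harmless for all four homological dimensions via Theorems \ref{chbasis} and \ref{completGid}, and checking that $\widehat R$ still has canonical module hence a dualizing complex. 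I do not anticipate a genuine obstacle; the proof is short once the amplitude bound is recorded.
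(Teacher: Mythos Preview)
Your proof is correct and follows the intended route: the paper gives no explicit proof of this corollary, but it is meant to follow directly from Theorem \ref{q221} once one observes the Ischebeck-type vanishing $\Ext_R^i(M,N)=0$ for $i>t-\depth(M)$ (which is exactly the step you write out). Note that the completion step is harmless but not strictly needed: in the $\operatorname{G-dim}$ case the hypothesis already supplies a canonical module (hence a dualizing complex), and for part (2) Theorem \ref{q221} requires no dualizing complex at all.
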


When $\operatorname{H-dim}_R=\operatorname{pd}_R$, note that Corollary \ref{sqly}(1) recovers the celebrated theorem of Foxby in \cite[Corollary 4.4]{isomorphismsbetweencomplexeswithapplicationstothehomologicaltheoryofmodules}, which states that if there exists a (finitely generated) non-zero $R$-module $M$ with both finite injective dimension and projective dimension, then $R$ is Gorenstein. On the other hand,  via Corollary \ref{sqly}(2), we obtain the following Gorenstein criterion that generalizes \cite[Corollary 4.7]{GorensteinringsviahomologicaldimensionsandsymmetryinvanishingofExtandTatecohomology}
 and \cite[Corollary 4.4]{NumericalApectsofComplexesOfFiniteHomologicalDimensions}.
\begin{corollary}\label{Gor2}
        Assume $R$ is Cohen-Macaulay local, and let $M$ be a non-zero finitely generated $R$-module such that $\operatorname{id}_R(M)<\infty$ and $\operatorname{Gid}_R(\Ext_R^i(M,M))<\infty$ for all $0\leq i \leq t-\operatorname{depth}(M)$. Then $R$ is Gorenstein. 
\end{corollary}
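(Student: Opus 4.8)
The plan is to reduce Corollary \ref{Gor2} to the earlier results by specializing Corollary \ref{sqly}(2) to the case $M=N$. First I would observe that the hypotheses of Corollary \ref{Gor2} are exactly those of Corollary \ref{sqly}(2) with $N$ replaced by $M$: we have $R$ Cohen--Macaulay local, $M$ a non-zero finitely generated $R$-module with $\operatorname{id}_R(M)<\infty$, and $\operatorname{Gid}_R(\Ext_R^i(M,M))<\infty$ for all $0\leq i\leq t-\operatorname{depth}(M)$. Note that the ``canonical module'' caveat attached to the case $\operatorname{H\text{-}dim}_R=\operatorname{G\text{-}dim}_R$ in Corollary \ref{sqly} is harmless here, since (as in the proof of Corollary \ref{Gor1}) we may pass to the completion $\widehat{R}$: completion preserves the Cohen--Macaulay property, the finiteness of $\operatorname{id}$, and the relevant $\operatorname{Gid}$-finiteness of the Ext modules (by Theorem \ref{chbasis}(2) and Theorem \ref{completGid}, together with the fact that $\Ext$ commutes with the flat base change to $\widehat R$ for finitely generated modules), and $\widehat R$ admits a dualizing complex, hence a canonical module. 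Moreover $\operatorname{depth}$ and $t=\operatorname{depth}(R)$ are unchanged under completion, so the index range $0\leq i\leq t-\operatorname{depth}(M)$ is the same, and $R$ is Gorenstein if and only if $\widehat R$ is.

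Granting that reduction, Corollary \ref{sqly}(2) applied with $N=M$ yields $\operatorname{G\text{-}dim}_R(M)<\infty$. So now $M$ is a non-zero finitely generated $R$-module with simultaneously $\operatorname{id}_R(M)<\infty$ and $\operatorname{G\text{-}dim}_R(M)<\infty$. The last step is to invoke the characterization of Gorenstein rings via the coexistence of finite injective dimension and finite Gorenstein dimension on a single nonzero module --- concretely \cite[Theorem 6.3.2]{GorensteinDimensions} (the same reference used to close Corollary \ref{Gor1}, which states that if there is a nonzero finitely generated module of finite Gorenstein injective dimension then $R$ is Gorenstein, combined with the fact that finite injective dimension implies finite Gorenstein injective dimension; alternatively one uses that $\operatorname{id}_R(M)<\infty$ and $\operatorname{G\text{-}dim}_R(M)<\infty$ force $\operatorname{pd}_R(M)<\infty$ and then applies Foxby's theorem that a nonzero module of finite injective and projective dimension forces $R$ Gorenstein). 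Either route gives that $R$ is Gorenstein, completing the proof.

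There is essentially no technical obstacle of substance here: the statement is a clean corollary obtained by feeding $N=M$ into the already-proved Corollary \ref{sqly}(2) and then citing a standard Gorensteinness criterion. The only point requiring a moment of care is the completion argument needed to dispose of the ``canonical module'' hypothesis, and for that one must make sure that the $\operatorname{Gid}$-finiteness of the finitely many Ext modules $\Ext_R^i(M,M)$ ascends to $\widehat R$ --- this is where Theorem \ref{completGid} (invariance of $\operatorname{Gid}$ under completion for complexes in $\mathcal{C}_{(\square)}^{(\mathrm f)}$) does the work, exactly as it did in establishing Corollary \ref{sqly} itself. Thus I would structure the written proof in two short sentences: reduce to the complete case and apply Corollary \ref{sqly}(2) to get $\operatorname{G\text{-}dim}_R(M)<\infty$; then conclude via \cite[Theorem 6.3.2]{GorensteinDimensions}.
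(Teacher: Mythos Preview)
Your proposal is correct and follows essentially the same route as the paper: apply Corollary~\ref{sqly}(2) with $N=M$ to obtain $\operatorname{G-dim}_R(M)<\infty$, and then conclude via \cite[Theorem~6.3.2]{GorensteinDimensions}. The completion discussion you include is harmless but in fact unnecessary, since the canonical-module caveat in Corollary~\ref{sqly} pertains only to item~(1) (the case $\operatorname{H\text{-}dim}_R=\operatorname{G\text{-}dim}_R$), not to item~(2).
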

\begin{proof}
    By Corollary \ref{sqly}(2), $\operatorname{G-dim}_R(M)<\infty$. Then the result follows from \cite[Theorem 6.3.2]{GorensteinDimensions}. 
\end{proof}

\section{Finite Homological dimensions of deficiency modules}\label{sectiondeficiencymodules}

\begin{setup}
In the rest of the paper, $R$ is local, and unless otherwise stated, all $R$-modules will be finitely generated. Moreover, in this section, we assume that $R$ has a dualizing complex $\boldsymbol{D}$. Also, we assume that $\boldsymbol{D}$ is normalized in the sense that $\sup D=\dim R$. 
\end{setup}

The existence of $\boldsymbol{D}$ is equivalent to $R$ being a factor of a Gorenstein local ring $S$, i.e., $R$ is a homomorphic image of $S$. In that context, Schenzel \cite{Sch98} generalized the notion of a canonical module of a ring in the following sense: For an $R$-module $M$, set 
$$K^i(M):=\Ext_S^{\dim (S)-i}(M,S)$$
for all $i \in \mathbb{Z}$. For each $i$, the $R$-module $K^i(M)$ is called the $i$-th \textit{deficiency module} of $M$. The $R$-module $K(M):=K^{\dim_R(M)}(M)$ is called the \textit{canonical module} of $M$. By local duality we have $K^i(M)=0$ for $i<\depth(M)$ or $i>\dim(M)$, and $K^i(M)\neq0$ for $i=\depth (M)$ and $i=\dim (M)$. The modules of deficiency also satisfy the following isomorphism for all $i$:
\begin{equation*}\label{112}
    K^i(M) \cong  \operatorname{H}_{i}(\RHom_R(M, \boldsymbol{D})).
\end{equation*}
For further details about deficiency modules, we refer the reader to \cite{Sch98}.

 In \cite{FH}, the authors investigated the consequences of the finiteness of classical homological dimensions, namely, projective and injective dimensions, of the deficiency modules.  Motivated by their work, we apply Theorem~\ref{q221} to extend those results to Gorenstein dimensions. We also emphasize that our approach differs from theirs in both method and perspective.

\begin{theorem}\label{sq18z}
    Let $M$ be an $R$-module. Then
    \begin{enumerate}[\rm(1)]
        \item If $\operatorname{H-dim}_R(K^i(M))<\infty$ for all $\operatorname{depth}(M)\leq i \leq \dim(M)$, then $\operatorname{Hid}_R(M)<\infty$.
        \item If $\operatorname{Hid}_R(K^i(M))<\infty$ for all $\operatorname{depth}(M)\leq i \leq \dim(M)$, then $\operatorname{H-dim}_R(M)<\infty$.
    \end{enumerate}
\end{theorem}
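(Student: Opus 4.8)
The plan is to reduce Theorem \ref{sq18z} to Theorem \ref{q221} by observing that the deficiency modules $K^i(M)$ are precisely (up to shift) the homology modules of the complex $\RHom_R(M,\boldsymbol{D})$, where $\boldsymbol{D}$ is the normalized dualizing complex. Indeed, from the isomorphism recorded just before the statement, $K^i(M) \cong \operatorname{H}_{i-d}(\RHom_R(M,\boldsymbol{D}))$, so the hypothesis ``$\operatorname{H-dim}_R(K^i(M))<\infty$ for all $i$'' (equivalently, for $\operatorname{depth}(M)\leq i\leq \dim(M)$, since the other $K^i(M)$ vanish) says exactly that all the homology modules of $\RHom_R(M,\boldsymbol{D})$ have finite $\operatorname{H-dim}$. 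Since $M \in \mathcal{C}_{(\square)}^{(\rm f)}(R)$ and $\boldsymbol{D}$ is a dualizing complex, $\RHom_R(M,\boldsymbol{D}) \in \mathcal{C}_{(\square)}^{(\rm f)}(R)$ as well (by \cite[A.4.4, A.5.2]{GorensteinDimensions}).

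For part (1): assume $\operatorname{H-dim}_R(K^i(M))<\infty$ for all $i$. Set $\boldsymbol{N} := \RHom_R(M,\boldsymbol{D})$. Then $\operatorname{H-dim}_R(\operatorname{H}_i(\boldsymbol{N}))<\infty$ for all $i$, so by Lemma \ref{lemHdim} (applying it with $\boldsymbol{X}$ finitely generated homology when $\operatorname{H-dim}=\operatorname{G-dim}$) we get $\operatorname{H-dim}_R(\boldsymbol{N})<\infty$. Now I want to invoke Theorem \ref{q221} with the roles arranged so that $\boldsymbol{N}$ plays the part of the second argument having finite injective dimension. The key identity is $M \simeq \RHom_R(\RHom_R(M,\boldsymbol{D}),\boldsymbol{D}) = \RHom_R(\boldsymbol{N},\boldsymbol{D})$, i.e. $\operatorname{Ext}_R^i(\boldsymbol{N},\boldsymbol{D})$ recovers $M$ up to shift, and $\operatorname{id}_R(\boldsymbol{D})<\infty$ since $\boldsymbol{D}$ is a bounded complex of injectives. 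So take $\boldsymbol{M} := \boldsymbol{N}$ and the second argument $:= \boldsymbol{D}$; then $\operatorname{Ext}_R^i(\boldsymbol{N},\boldsymbol{D})$ is $M$ shifted, which has finite $\operatorname{H-dim}$ by hypothesis (it equals $K^?$... actually it equals $\operatorname{H}_{i-?}$ of $M$ as a complex, i.e. $M$ or $0$), hence finite $\operatorname{H-dim}$ trivially since $\operatorname{H-dim}_R(M)$... wait — that is what we want to avoid assuming. The correct reading: we are given $\operatorname{H-dim}_R$ of the $K^i(M)$, and we derived $\operatorname{H-dim}_R(\boldsymbol{N})<\infty$; applying Theorem \ref{q221}(1) to $\boldsymbol{M}:=\boldsymbol{N}$, $\boldsymbol{N}_{\text{thm}}:=\boldsymbol{D}$ gives $\operatorname{Hid}_R(\boldsymbol{N})<\infty$ from $\operatorname{H-dim}_R(\operatorname{Ext}_R^i(\boldsymbol{N},\boldsymbol{D}))<\infty$, but $\operatorname{Ext}_R^i(\boldsymbol{N},\boldsymbol{D})$ is just $M$ up to shift, so this needs $\operatorname{H-dim}_R(M)<\infty$, which is backwards. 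Instead I apply Theorem \ref{q221} the other way: with $\boldsymbol{M}:= M$ (or rather its dual) — more precisely, apply Theorem \ref{q221}(1) with the $\Ext$-hypothesis being exactly ``$\operatorname{H-dim}_R(\operatorname{H}_i\RHom_R(M,\boldsymbol{D}))<\infty$'', i.e. $\operatorname{H-dim}_R(\Ext_R^i(M,\boldsymbol{D}))<\infty$ for all $i$; since $\operatorname{id}_R(\boldsymbol{D})<\infty$, the theorem yields $\operatorname{Hid}_R(M)<\infty$. That is the intended route. For part (2), symmetrically: the hypothesis $\operatorname{Hid}_R(K^i(M))<\infty$ for all $i$ says $\operatorname{Hid}_R(\operatorname{H}_i\RHom_R(M,\boldsymbol{D}))<\infty$ for all $i$; again with $\operatorname{id}_R(\boldsymbol{D})<\infty$ this is the hypothesis of Theorem \ref{q221}(1) read with ``$\operatorname{Hid}$'' in place of ``$\operatorname{H-dim}$''. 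Here one must check which case of Theorem \ref{q221} applies: when $\operatorname{Hid}=\operatorname{id}$ it is \ref{q221}(1) with $\operatorname{H-dim}=\operatorname{pd}$ run in reverse via Foxby; when $\operatorname{Hid}=\operatorname{Gid}$ it is the content of \ref{q221}(2) combined with Theorem \ref{duals}. Concretely, $\operatorname{Gid}_R(\Ext_R^i(M,\boldsymbol{D}))<\infty$ for all $i$ together with $\operatorname{id}_R(\boldsymbol{D})<\infty$ gives $\operatorname{G-dim}_R(M)<\infty$ by Theorem \ref{q221}(2); and $\operatorname{id}_R(\Ext_R^i(M,\boldsymbol{D}))<\infty$ for all $i$ gives $\operatorname{pd}_R(M)<\infty$ — this last one should follow by dualizing into $\boldsymbol{D}$ and using the Foxby equivalence (Theorem \ref{foxby}) as in the proof of Theorem \ref{q221}.

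The main obstacle I anticipate is bookkeeping the direction in which Theorem \ref{q221} is applied and making sure the finiteness hypotheses are genuinely of the form $\operatorname{H-dim}_R(\Ext_R^i(\boldsymbol{M},\boldsymbol{N}))<\infty$ \emph{for the complex $\boldsymbol{N}$ of finite injective dimension}, rather than accidentally assuming what we want to prove. The clean way is: always take $\boldsymbol{N}:=\boldsymbol{D}$ (which has finite injective dimension, being a bounded injective complex), take $\boldsymbol{M}:=M$, note $\Ext_R^i(M,\boldsymbol{D}) \cong K^{i+d}(M)$ up to reindexing so the hypothesis on deficiency modules is literally a hypothesis on $\Ext_R^\bullet(M,\boldsymbol{D})$, and then invoke Theorem \ref{q221}(1) or (2) directly, with $\operatorname{H-dim}$ or $\operatorname{Hid}$ as appropriate, to conclude $\operatorname{Hid}_R(M)<\infty$ or $\operatorname{G-dim}_R(M)<\infty$. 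One should also double-check the edge case $M=0$ (trivial) and confirm that the reindexing $K^i(M)=\operatorname{H}_{i-d}(\RHom_R(M,\boldsymbol{D}))$ is exactly compatible with the $\Ext$-indexing convention $\Ext_R^j(\boldsymbol{M},\boldsymbol{N})=\operatorname{H}_{-j}(\RHom_R(\boldsymbol{M},\boldsymbol{N}))$ fixed in Section 2 — i.e. $\Ext_R^j(M,\boldsymbol{D})=K^{d-j}(M)$ — so that ``for all $i$'' on the deficiency side matches ``for all $i$'' on the $\Ext$ side.
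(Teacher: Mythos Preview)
Your eventual route --- take $\boldsymbol{N}:=\boldsymbol{D}$, $\boldsymbol{M}:=M$, identify $\Ext_R^{j}(M,\boldsymbol{D})=K^{d-j}(M)$ so that the hypothesis on deficiency modules is literally a hypothesis on $\Ext_R^\bullet(M,\boldsymbol{D})$, and then apply Theorem \ref{q221} with $\operatorname{id}_R(\boldsymbol{D})<\infty$ --- is exactly the paper's proof. The only point to tighten is the case $\operatorname{Hid}=\operatorname{id}$ in part (2): the implication ``$\operatorname{id}_R(\Ext_R^i(M,\boldsymbol{D}))<\infty$ for all $i$ $\Rightarrow$ $\operatorname{pd}_R(M)<\infty$'' is not contained in Theorem \ref{q221} (whose item (1) goes from $\operatorname{H-dim}$ of Ext to $\operatorname{Hid}$ of $\boldsymbol{M}$, not from $\operatorname{id}$ of Ext to $\operatorname{pd}$ of $\boldsymbol{M}$), and the paper handles it by citing \cite[Proposition 4.2]{NumericalApectsofComplexesOfFiniteHomologicalDimensions} rather than by the Foxby-equivalence argument you gesture at.
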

\begin{proof}

For each $i$, we note that $K^i(M)=\operatorname{H}_{i}(\RHom_R(M, \boldsymbol{D}))=\Ext_R^{-i}(M, \boldsymbol{D})$. Since $K^i(M)=0$ for $i<\depth (M)$ or $i>\operatorname{dim}_R(M)$, by assumption we see have that all modules $\Ext_R^i(M,\boldsymbol{D})$ have finite $\operatorname{H-dim}_R$ or all modules have finite $\operatorname{Hid}$. Then the result follows from \cite[Proposition 4.2]{NumericalApectsofComplexesOfFiniteHomologicalDimensions} and Theorem \ref{q221}. 
\end{proof}


The following corollary contains the counterparts of \cite[Proposition 3.5(b)]{onthemuiinaminimalinjectiveresolution} to Gorenstein dimensions. 
\begin{corollary}\label{wqfa}
    Let $M$ be an $R$-module. If $M$ is Cohen-Macaulay, then
    \begin{enumerate}[\rm(1)]
        \item $\operatorname{H-dim}_R(K(M))<\infty$ if and only if $\operatorname{Hid}_R(M)<\infty$.
        \item $\operatorname{H-dim}_R(M)<\infty$ if and only if $\operatorname{Hid}_R(K(M))<\infty$. 
    \end{enumerate}
\end{corollary}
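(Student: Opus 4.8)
The plan is to deduce Corollary \ref{wqfa} directly from Theorem \ref{sq18z} by exploiting the Cohen--Macaulay hypothesis on $M$, which collapses the range of indices where the deficiency modules can be nonzero. Recall that $K^i(M)=0$ unless $\depth(M)\leq i\leq\dim(M)$, and when $M$ is Cohen--Macaulay we have $\depth(M)=\dim(M)$, so the only possibly nonzero deficiency module is $K^{\dim_R(M)}(M)=K(M)$, the canonical module of $M$. Thus the hypotheses ``$\operatorname{H-dim}_R(K^i(M))<\infty$ for all $\depth(M)\leq i\leq\dim(M)$'' and ``$\operatorname{Hid}_R(K^i(M))<\infty$ for all such $i$'' in Theorem \ref{sq18z} reduce to the single conditions $\operatorname{H-dim}_R(K(M))<\infty$ and $\operatorname{Hid}_R(K(M))<\infty$, respectively.

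First I would record this reduction explicitly, and then apply Theorem \ref{sq18z}(1) to obtain the forward implication of (1): if $\operatorname{H-dim}_R(K(M))<\infty$, then $\operatorname{Hid}_R(M)<\infty$. Similarly Theorem \ref{sq18z}(2) gives the forward implication of (2): if $\operatorname{Hid}_R(K(M))<\infty$, then $\operatorname{H-dim}_R(M)<\infty$. For the converse directions I would use the description $K(M)=\Ext_R^{d-\dim(M)}(M,\boldsymbol{D})=\operatorname{H}_{\dim(M)-d}(\RHom_R(M,\boldsymbol{D}))$, and since $M$ is Cohen--Macaulay, $\RHom_R(M,\boldsymbol{D})$ is (up to a shift) concentrated in a single homological degree, hence $\RHom_R(M,\boldsymbol{D})\simeq \Sigma^{n}K(M)$ in $\mathcal{D}(R)$ for the appropriate integer $n$. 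Then for (1)'s converse, if $\operatorname{Hid}_R(M)<\infty$, then $M\in\mathcal{B}_{\boldsymbol{D}}(R)$ (Proposition \ref{perAB}(2) together with, for the $\operatorname{pd}$ case, the Foxby equivalence Theorem \ref{foxby}), so $\RHom_R(\boldsymbol{D},M)$ has finite $\operatorname{H-dim}_R$; but one wants information about $\RHom_R(M,\boldsymbol{D})$. More directly: $\operatorname{Hid}_R(M)<\infty$ forces $\operatorname{H-dim}_R(\RHom_R(M,\boldsymbol{D}))<\infty$ by Theorem \ref{duals} (for the Gorenstein case) and by a standard argument for the $\operatorname{pd}$ case (finite injective dimension dualizes against $\boldsymbol{D}$ to finite projective dimension), whence $\operatorname{H-dim}_R(K(M))<\infty$ since $K(M)$ is a shift of $\RHom_R(M,\boldsymbol{D})$. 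For (2)'s converse, if $\operatorname{H-dim}_R(M)<\infty$ then $M\in\mathcal{A}_{\boldsymbol{D}}(R)$ (Proposition \ref{perAB}(1) and Foxby equivalence), so $\boldsymbol{D}\otimes_R^{\mathbf{L}}M\simeq\RHom_R(\RHom_R(M,\boldsymbol{D}),\boldsymbol{D})$-type reasoning, or more cleanly $\operatorname{Hid}_R(\RHom_R(M,\boldsymbol{D}))<\infty$ by Theorem \ref{duals} in the Gorenstein case; again $K(M)$ is a shift of $\RHom_R(M,\boldsymbol{D})$ so $\operatorname{Hid}_R(K(M))<\infty$.

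The main subtlety I anticipate is making the ``$\RHom_R(M,\boldsymbol{D})$ is a shift of $K(M)$'' step watertight: one must verify that Cohen--Macaulayness of $M$ really does make $\RHom_R(M,\boldsymbol{D})$ homologically concentrated in one degree. This follows from the deficiency module vanishing $K^i(M)=\operatorname{H}_{i-d}(\RHom_R(M,\boldsymbol{D}))=0$ for $i\neq\dim(M)$ when $M$ is Cohen--Macaulay, which is recorded in the excerpt; so $\RHom_R(M,\boldsymbol{D})$ has homology only in degree $\dim(M)-d$ and is therefore quasi-isomorphic to $\Sigma^{\dim(M)-d}K(M)$. With this in hand, the equivalence of finiteness of a homological dimension of a complex with that of its single nonzero homology module (and invariance under shift) closes every gap. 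I would write the proof concisely: state the reduction, invoke Theorem \ref{sq18z} for the two forward implications, and invoke Theorem \ref{duals} (Gorenstein case) and the analogous classical statement (projective case) applied to the shift identification for the two converses.
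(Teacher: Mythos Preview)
Your proposal is correct, and for the forward implications of (1) and (2) it matches the paper exactly: Cohen--Macaulayness of $M$ collapses the range in Theorem \ref{sq18z} to the single module $K(M)$. The difference is in the converses. You identify $\RHom_R(M,\boldsymbol{D})\simeq\Sigma^{\dim(M)-d}K(M)$ and then appeal to Theorem \ref{duals} (and its unstated projective--injective analogue) to transfer finiteness of $\operatorname{Hid}_R(M)$ to $\operatorname{H-dim}_R(K(M))$ and vice versa. The paper instead invokes Schenzel's result \cite[Theorem 1.14]{Sch98}: $K(M)$ is Cohen--Macaulay and $K(K(M))\cong M$. This lets the paper simply apply Theorem \ref{sq18z} a second time with $K(M)$ in the role of $M$, and in fact reduces (2) to (1) immediately. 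The paper's route is shorter and stays entirely within the machinery already set up, whereas your route needs the $\operatorname{pd}$--$\operatorname{id}$ analogue of Theorem \ref{duals}, which is standard but is not stated in the paper and would require an external citation or a short argument. Your derived-category identification $\RHom_R(M,\boldsymbol{D})\simeq\Sigma^{n}K(M)$ is of course exactly what underlies the Schenzel duality, so the two arguments are morally the same; the paper just packages the converse more economically.
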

\begin{proof}
 In view of \cite[Theorem 1.14]{Sch98}, it is enough to prove (1). One implication follows from Theorem \ref{sq18z}(1). For the other, suppose that $\operatorname{Hid}_R(M)<\infty$. Since $M \cong K(K(M))$, then $K(M)$ is Cohen-Macaulay (\cite[Theorem 1.14]{Sch98} again) and from Theorem \ref{sq18z}(2) it follows that $\operatorname{H-dim}_R(K(M))<\infty$. 
\end{proof}

\begin{corollary}\label{newGor}
    Let $R$ be a Cohen-Macaulay local ring with canonical module $\omega$. Assume that $\Ext_R^i(\omega, R)=0$ for all $1\leq i \leq t$. Let $\_^\dagger$ stand for $\Hom_R(\_,\omega)$.  If  $\operatorname{Gid}_R((\omega^*)^\dagger)<\infty$, then $R$ is Gorenstein. 
\end{corollary}
\begin{proof}
Note that from the vanishing hypothesis and \cite[Theorem 2.1(i)]{VanishingOf(Co)homologyfrenessCriteriaAndTheAuslanderReitenConjectureforCohen-MacaulayBurch} we see that $\omega^*$ is maximal Cohen-Macaulay. Since  $\operatorname{Gid}_R((\omega^*)^\dagger)<\infty$, then by Corollary \ref{wqfa}, $\operatorname{G-dim}_R(\omega^\ast)<\infty$. Therefore, the conclusion follows from \cite[Corollary 5.13]{OnModulesWhoseDualIsOfFiniteGorensteinDimension}. 
\end{proof}

\section{A duality for Ext modules with finite homological dimension}

\label{sectionduality}

In this section, we establish a duality of certain Ext modules. The following theorem will be often used in this section.

\begin{proposition}[{\cite[Theorem 4.4]{OnGorensteinProjectiveInjectiveAndFlatDimensionsAFunctorialDescriptionWithApplications}, \cite[Corollary 3.1.11 \& Theorem 3.4.11]{GorensteinDimensions}
}] \label{GdimCrs} 
    Assume $R$ is Cohen-Macaulay with canonical module $\omega$, and let $M$ be an $R$-module. 
    \begin{enumerate}[\rm(1)]
        \item $\operatorname{G-dim}_R(M)<\infty$ if and only if $M \in \mathcal{A}(R)$. 
        \item $\operatorname{Gid}_R(M)<\infty$ if and only if $M \in \mathcal{B}(R)$.
        \item $\operatorname{H-dim}_R(\Hom_R(\omega, M))<\infty$ if and only if $\operatorname{Hid}_R(M)<\infty$.
        \item $\operatorname{H-dim}_R(M)<\infty$ if and only if $\operatorname{Hid}_R(M\otimes_R \omega)<\infty$. 
    \end{enumerate}
\end{proposition}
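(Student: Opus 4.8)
The plan is to deduce everything from the dualizing‑complex formalism of Section~2, the crucial point being that over a Cohen--Macaulay local ring $R$ with canonical module $\omega$, the module $\omega$ \emph{is} a dualizing complex for $R$ (viewing it, as one may, as its bounded injective resolution): this is precisely the content of $\RHom_R(\omega,\omega)\simeq R$ -- the defining property of the canonical module -- together with $\operatorname{id}_R(\omega)=\dim R<\infty$. So I would take $\boldsymbol{D}:=\omega$ throughout; then $\mathcal{A}_{\boldsymbol{D}}(R)=\mathcal{A}_\omega(R)$, $\mathcal{B}_{\boldsymbol{D}}(R)=\mathcal{B}_\omega(R)$, $\RHom_R(\boldsymbol{D},M)=\RHom_R(\omega,M)$ and $\boldsymbol{D}\otimes_R^{\mathbf{L}}M=\omega\otimes_R^{\mathbf{L}}M$. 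With this, (1) and (2) are immediate: they are exactly Proposition~\ref{perAB}(1) and~(2) applied to $\boldsymbol{D}=\omega$.

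For (3) and (4) I would combine the Foxby equivalence (Theorem~\ref{foxby}) with Proposition~\ref{perAB}. Recall that for a finitely generated $R$-module, and more generally for any complex in $\mathcal{C}_{(\square)}^{(\rm f)}(R)$, membership in $\mathcal{F}(R)$ (resp. $\mathcal{I}(R)$, $\mathcal{A}_{\boldsymbol{D}}(R)$, $\mathcal{B}_{\boldsymbol{D}}(R)$) says exactly that the projective (resp. injective, Gorenstein, Gorenstein injective) dimension is finite -- the last two by Proposition~\ref{perAB}. Since $\RHom_R(\omega,M)$ and $\omega\otimes_R^{\mathbf{L}}M$ lie in $\mathcal{C}_{(\square)}^{(\rm f)}(R)$, feeding these identifications into the four equivalences of Theorem~\ref{foxby} yields, at the level of \emph{complexes},
\[
\operatorname{Hid}_R(M)<\infty\ \Longleftrightarrow\ \operatorname{H-dim}_R(\RHom_R(\omega,M))<\infty,\qquad \operatorname{H-dim}_R(M)<\infty\ \Longleftrightarrow\ \operatorname{Hid}_R(\omega\otimes_R^{\mathbf{L}}M)<\infty,
\]
simultaneously for $(\operatorname{H-dim},\operatorname{Hid})=(\operatorname{pd},\operatorname{id})$ and for $(\operatorname{G-dim},\operatorname{Gid})$. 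Thus (3) and (4) reduce to replacing the derived functors by their ordinary counterparts, i.e. to showing that $\RHom_R(\omega,M)\simeq\Hom_R(\omega,M)$ whenever $\operatorname{Hid}_R(M)<\infty$ and that $\omega\otimes_R^{\mathbf{L}}M\simeq M\otimes_R\omega$ whenever $\operatorname{H-dim}_R(M)<\infty$ -- equivalently, that $\Ext_R^{i}(\omega,M)=0$ for all $i>0$ in the first case and $\Tor^R_i(\omega,M)=0$ for all $i>0$ in the second -- together with the analogous vanishing under finiteness of the relevant dimension of $\Hom_R(\omega,M)$, resp. $M\otimes_R\omega$, to make the implications reverse.

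The main obstacle is exactly this last step -- the passage from derived to underived $\Hom$ and tensor -- and it is the only place where the Cohen--Macaulay hypothesis is really used. In the classical case $(\operatorname{H-dim},\operatorname{Hid})=(\operatorname{pd},\operatorname{id})$, the vanishing of $\Ext^{>0}_R(\omega,-)$ and $\Tor^R_{>0}(\omega,-)$ on modules of finite injective, resp. projective, dimension is Ischebeck's theorem (and its $\operatorname{Tor}$ analogue) applied to the maximal Cohen--Macaulay module $\omega$, using $\depth(\omega)=\depth(R)$; this case of (3) and (4) is in any event \cite[Theorem~3.4.11]{GorensteinDimensions}. For $(\operatorname{G-dim},\operatorname{Gid})$ one uses that $\omega$ is a \emph{maximal} Cohen--Macaulay module of finite injective dimension: for a totally reflexive module $G$ one has $G\simeq\RHom_R(G^{\ast},R)$, so by tensor‑evaluation (legitimate since $\operatorname{id}_R(\omega)<\infty$) $\omega\otimes_R^{\mathbf{L}}G\simeq\RHom_R(G^{\ast},\omega)\simeq\Hom_R(G^{\ast},\omega)$, the last isomorphism because $G^{\ast}$ is maximal Cohen--Macaulay and $\omega$ is dualizing; resolving an arbitrary module of finite Gorenstein dimension by totally reflexives then gives $\Tor^R_{>0}(\omega,M)=0$, and the $\operatorname{Gid}$ statement follows dually, e.g. via the duality $\operatorname{Gid}_R(X)<\infty\Leftrightarrow\operatorname{G-dim}_R(\RHom_R(X,\omega))<\infty$ of Theorem~\ref{duals}. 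Assembling these vanishings with the complex‑level equivalences above yields (3) and (4).
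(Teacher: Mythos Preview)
The paper does not supply a proof of this proposition at all: the header simply attributes it to Proposition~\ref{perAB} together with \cite[Theorem~3.4.11]{GorensteinDimensions}. Your proposal is therefore not being compared against an argument in the paper but against what those two references contain, and in broad outline you recover exactly that: parts~(1) and~(2) are Proposition~\ref{perAB} with $\boldsymbol{D}=\omega$, and parts~(3) and~(4) are the module--level Foxby equivalence of \cite[Theorem~3.4.11]{GorensteinDimensions}, obtained from Theorem~\ref{foxby} by passing from derived to underived functors. So the strategy is the paper's strategy.

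That said, two steps in your execution are not solid as written. First, the tensor--evaluation isomorphism you invoke, $\omega\otimes_R^{\mathbf{L}}\RHom_R(G^{\ast},R)\simeq\RHom_R(G^{\ast},\omega)$, is not covered by the standard hypotheses: those require either $\operatorname{fd}_R(\omega)<\infty$ or $\operatorname{pd}_R(G^{\ast})<\infty$, and neither holds in general. The conclusion $\Tor^R_{>0}(\omega,G)=0$ for $G$ totally reflexive is nonetheless true, but the clean route is simply to use part~(1): $G\in\mathcal{A}_\omega(R)$, and membership of a \emph{module} in $\mathcal{A}_\omega(R)$ already forces $\Tor^R_{>0}(\omega,-)$ to vanish (this unpacking of the derived definition for modules is precisely part of what \cite[Theorem~3.4.11]{GorensteinDimensions} establishes).

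Second, and more seriously, the \emph{reverse} implications in (3) and (4) for the Gorenstein case are not addressed. Your sketch shows, for instance, that if $\operatorname{Gid}_R(M\otimes_R\omega)<\infty$ then $M\otimes_R\omega\in\mathcal{B}_\omega(R)$, whence $\Hom_R(\omega,M\otimes_R\omega)\in\mathcal{A}_\omega(R)$ has finite Gorenstein dimension; but to conclude $\operatorname{G-dim}_R(M)<\infty$ you must identify $M$ with $\Hom_R(\omega,M\otimes_R\omega)$, i.e.\ show the unit $M\to\Hom_R(\omega,M\otimes_R\omega)$ is an isomorphism and $\Tor^R_{>0}(\omega,M)=0$ --- exactly the content of $M\in\mathcal{A}_\omega(R)$, which is what you are trying to prove. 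Breaking this circularity is the substantive content of \cite[Theorem~3.4.11]{GorensteinDimensions} (it uses that $\omega$, having full support, makes the unit and counit maps detectable after tensoring), and your appeal to Theorem~\ref{duals} does not bypass it. Since the paper itself is content to cite that theorem, the cleanest fix is to do the same rather than attempt to reprove it.
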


In \cite[Corollary 42(3)]{HomologicalDimensionsTheGorensteinPropertyAndSpecialCasesOfSomeConjectures}, although the authors provide this without using Proposition \ref{GdimCrs}, this proposition assures that their vanishing assumptions are not necessary.

\begin{corollary}\label{85aq}
    Assume $R$ is Cohen-Macaulay with a canonical module $\omega$, and let $N$ be a non-zero $R$-module. Then $R$ is Gorenstein in any of the following cases:
    \begin{enumerate}[\rm(1)]
        \item  $\operatorname{pd}_R(N)<\infty$ and $\operatorname{G-dim}_R(\Hom_R(\omega, N))<\infty$.
        \item $\operatorname{G-dim}_R(N)<\infty$ and  $\operatorname{pd}_R(\Hom_R(\omega, N))<\infty$.
    \end{enumerate}
    \end{corollary}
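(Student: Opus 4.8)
The plan is to convert each hypothesis on $\Hom_R(\omega,N)$ into a finiteness statement for an injective-type dimension of $N$ itself, using the equivalence in Proposition \ref{GdimCrs}(3), and then to apply the classical Gorensteinness criterion \cite[Theorem 6.3.2]{GorensteinDimensions} already used in the proofs of Corollary \ref{Gor1} and Corollary \ref{Gor2}: if a local ring admits a nonzero finitely generated module that has finite projective dimension together with finite Gorenstein injective dimension, or finite Gorenstein dimension together with finite injective dimension, then the ring is Gorenstein.

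In case (1), I would apply Proposition \ref{GdimCrs}(3) with $\operatorname{H-dim}_R=\operatorname{G-dim}_R$ (hence $\operatorname{Hid}_R=\operatorname{Gid}_R$) to $N$: the hypothesis $\operatorname{G-dim}_R(\Hom_R(\omega,N))<\infty$ yields $\operatorname{Gid}_R(N)<\infty$. Since by assumption $\operatorname{pd}_R(N)<\infty$ and $N\neq 0$, the module $N$ is a nonzero finitely generated module of finite projective dimension and finite Gorenstein injective dimension, so $R$ is Gorenstein by \cite[Theorem 6.3.2]{GorensteinDimensions}. Case (2) is the mirror image: applying Proposition \ref{GdimCrs}(3) with $\operatorname{H-dim}_R=\operatorname{pd}_R$ (hence $\operatorname{Hid}_R=\operatorname{id}_R$) turns $\operatorname{pd}_R(\Hom_R(\omega,N))<\infty$ into $\operatorname{id}_R(N)<\infty$, and then $N$ is a nonzero finitely generated module with $\operatorname{G-dim}_R(N)<\infty$ and $\operatorname{id}_R(N)<\infty$, so again $R$ is Gorenstein by \cite[Theorem 6.3.2]{GorensteinDimensions}.

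There is essentially no obstacle here: the corollary is a short consequence of Proposition \ref{GdimCrs}(3) together with the cited criterion. The only points demanding care are bookkeeping ones, namely pairing $\operatorname{H-dim}_R$ with the correct injective counterpart $\operatorname{Hid}_R$ in each of the two cases, and observing that the Gorensteinness criterion is invoked for $N$, which is nonzero by hypothesis, so that no nonvanishing assumption on $\Hom_R(\omega,N)$ is needed.
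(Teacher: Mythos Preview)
Your proposal is correct and matches the paper's own proof essentially verbatim: the paper also invokes Proposition \ref{GdimCrs}(3) to obtain $\operatorname{Gid}_R(N)<\infty$ in case (1) and $\operatorname{id}_R(N)<\infty$ in case (2), and then concludes by \cite[Theorem 6.3.2]{GorensteinDimensions}.
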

 \begin{proof} Proposition \ref{GdimCrs} says that in (1), we have $\operatorname{Gid}_R(N)<\infty$ while in (2), we have $\operatorname{id}_R(N)<\infty$. The result follows from \cite[Theorem 6.3.2]{GorensteinDimensions}. 
 \end{proof}

\begin{proposition}\label{lemx}
   Assume $R$ is Cohen-Macaulay with canonical module $\omega$, and let $n$ be a non-negative integer. Let $M$ and $N$ be $R$-modules such that $\operatorname{G-dim}_R(N)<\infty$. Then \linebreak  $\operatorname{H-dim}_R(\Hom_R(M,N))<\infty$  if and only if $\operatorname{Hid}_R(\Hom_R(M, N \otimes_R \omega))<\infty$. In this case, the natural map $$\Phi_{M,N}: \Hom_R(M,N) \otimes_R  \omega \to \Hom_R(M, N \otimes_R 
    \omega)$$ is an isomorphism.
\end{proposition}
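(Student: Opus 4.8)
The plan is to reduce everything to the semidualizing/dualizing-complex machinery already developed. Since $R$ is Cohen--Macaulay with canonical module $\omega$, the complex $\boldsymbol{D} \simeq \Sigma^{\dim R}\,\omega$ is a dualizing complex, and $\omega$ is a semidualizing module; thus the Auslander and Bass classes $\mathcal{A}_\omega(R)$, $\mathcal{B}_\omega(R)$ coincide with those attached to $\boldsymbol{D}$ up to shift, and Proposition \ref{GdimCrs} and the Foxby equivalence (Theorem \ref{foxby}) are available. First I would record that $\operatorname{G-dim}_R(N)<\infty$ means $N \in \mathcal{A}_\omega(R)$ by Proposition \ref{GdimCrs}(1), so the natural map $N \to \RHom_R(\omega, \omega \otimes_R^{\mathbf L} N)$ is an isomorphism in $\mathcal{D}(R)$ and $\omega \otimes_R^{\mathbf L} N \in \mathcal{B}_\omega(R)$; moreover $\operatorname{G-dim}_R(N)<\infty$ forces $\operatorname{Tor}^R_{>0}(\omega,N)=0$ (since $\omega$ has finite injective dimension, a depth count along a G-resolution of $N$ gives this), so $\omega \otimes_R^{\mathbf L} N \simeq N \otimes_R \omega$ as honest modules. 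Similarly $\RHom_R(\omega,-)$ applied to $N\otimes_R\omega$ reproduces $N$, with higher Ext vanishing.

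Next I would compute the derived Hom. Using the adjunction (Adj)/(Swap) isomorphisms,
\[
\RHom_R(M, N \otimes_R \omega) \simeq \RHom_R\bigl(M, \omega \otimes_R^{\mathbf L} N\bigr) \simeq \RHom_R\bigl(M,\, \omega \otimes_R^{\mathbf L} \RHom_R(\omega,\omega)\otimes_R^{\mathbf L} N\bigr),
\]
but the cleaner route is tensor-evaluation: because $M$ has a free resolution and $\omega$ is a module, there is a natural isomorphism $\RHom_R(M,N)\otimes_R^{\mathbf L}\omega \simeq \RHom_R(M, N\otimes_R^{\mathbf L}\omega)$ whenever $N$ (or $M$) has finite flat or finite projective dimension — and here the correct hypothesis to invoke is that $\operatorname{G-dim}_R(N)<\infty$ places $N$ in $\mathcal{A}_\omega(R)$, which is exactly the condition making the tensor-evaluation morphism $\RHom_R(M,N)\otimes_R^{\mathbf L}\omega \to \RHom_R(M, N\otimes_R^{\mathbf L}\omega)$ an isomorphism (this is the standard fact, cf. \cite[Theorem 4.4.5]{HyperhomologicalAlgebra} or the evaluation morphisms of \cite{HyperhomologicalAlgebra}, combined with $N\in\mathcal{A}_\omega(R)$). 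Granting this isomorphism in $\mathcal{D}(R)$, I then apply Proposition \ref{GdimCrs}(4) with the complex $\boldsymbol{X} = \RHom_R(M,N)$: that proposition, stated for modules, extends to complexes in $\mathcal{C}_{(\square)}^{(\mathrm f)}(R)$ via the Foxby equivalence, giving $\operatorname{H-dim}_R(\RHom_R(M,N))<\infty$ iff $\operatorname{Hid}_R(\RHom_R(M,N)\otimes_R^{\mathbf L}\omega)<\infty$, i.e. iff $\operatorname{Hid}_R(\RHom_R(M,N\otimes_R\omega))<\infty$. Since $\Hom_R(M,N)=\operatorname{H}_0\RHom_R(M,N)$ and the finiteness of a single homological dimension of a module coincides with that of the module viewed as a complex concentrated in degree zero, I must upgrade ``$\operatorname{H-dim}$ of the complex is finite'' to ``$\operatorname{H-dim}$ of $\Hom_R(M,N)$ is finite'' and likewise on the injective side — but this is precisely where the equivalence $\Hom_R(M,N)\cong\Hom_R(M,N)\otimes_R\omega$ at the module level is still wanted, so the argument is not quite this clean unless $\RHom_R(M,N)$ happens to be concentrated in degree zero.

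The honest way, and the one I would actually write, is: the statement is really the $n=0$ / single-degree case of Theorem \ref{sqmm} restricted to $\operatorname{Ext}^0$, so I would argue it directly. For the natural map $\Phi_{M,N}$: take a free presentation $F_1 \to F_0 \to M \to 0$; applying $\Hom_R(-,N)$ and then $-\otimes_R\omega$, versus applying $\Hom_R(-,N\otimes_R\omega)$, and using right-exactness of $-\otimes_R\omega$ together with the canonical isomorphisms $\Hom_R(F_i,N)\otimes_R\omega\cong\Hom_R(F_i,N\otimes_R\omega)$ for the free modules $F_i$, a diagram chase shows $\Phi_{M,N}$ is an isomorphism \emph{once we know} $\operatorname{Tor}_1^R(\Coker(\Hom_R(F_0,N)\to\Hom_R(F_1,N)),\omega)=0$ or, more robustly, once $N\in\mathcal{A}_\omega(R)$ so that the evaluation morphism is a quasi-isomorphism in all degrees; the degree-zero homology of that quasi-isomorphism is exactly $\Phi_{M,N}$ being iso. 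For the equivalence of finiteness: $\operatorname{H-dim}_R(\Hom_R(M,N))<\infty \Rightarrow \Hom_R(M,N)\in\mathcal{A}_\omega(R)$, hence $\Hom_R(M,N)\otimes_R\omega\in\mathcal{B}_\omega(R)$ with $\operatorname{Hid}$ finite by Proposition \ref{GdimCrs}(4), and $\Phi_{M,N}$ transports this to $\Hom_R(M,N\otimes_R\omega)$; conversely if $\operatorname{Hid}_R(\Hom_R(M,N\otimes_R\omega))<\infty$ then $\Hom_R(M,N\otimes_R\omega)\cong\Hom_R(M,N)\otimes_R\omega\in\mathcal{B}_\omega(R)$, and applying $\Hom_R(\omega,-)$ recovers $\Hom_R(M,N)$ with finite $\operatorname{H-dim}$ by Proposition \ref{GdimCrs}(3) — here one again needs $N\in\mathcal{A}_\omega(R)$ to know $\Hom_R(\omega,\Hom_R(M,N)\otimes_R\omega)\cong\Hom_R(M,N)$, which follows from $\operatorname{G-dim}_R(N)<\infty$.

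\textbf{Main obstacle.} The delicate point throughout is justifying that $\Phi_{M,N}$ is an isomorphism and that $\Hom_R(\omega,N\otimes_R\omega)\cong N$ with all higher Tor and Ext vanishing; equivalently, that the module-level hypothesis $\operatorname{G-dim}_R(N)<\infty$ is strong enough to make the derived tensor-evaluation morphism a genuine quasi-isomorphism rather than just a map with controlled cone. I expect this to be handled exactly by the membership $N\in\mathcal{A}_\omega(R)$ (Proposition \ref{GdimCrs}(1)) together with the vanishing $\operatorname{Tor}^R_{i>0}(\omega,N)=0$ forced by $\operatorname{id}_R(\omega)=\dim R<\infty$ and a standard depth/dimension dévissage along a finite $\mathrm{G}$-dimension resolution of $N$ — so the real content is bookkeeping in the derived category, with the Foxby equivalence (Theorem \ref{foxby}) and Proposition \ref{GdimCrs} doing the heavy lifting.
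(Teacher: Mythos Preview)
Your forward direction is essentially the paper's: once $\operatorname{H-dim}_R(\Hom_R(M,N))<\infty$, both $\Hom_R(M,N)$ and $N$ lie in $\mathcal{A}_\omega(R)$, so $\Phi_{M,N}$ is an isomorphism (the paper cites \cite[Lemma~5.1]{AuslanderReitenCinjetivadimensionVanishingofExt}, which requires \emph{both} memberships in $\mathcal{A}_\omega(R)$), and Proposition~\ref{GdimCrs}(4) finishes.

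The converse, however, is circular. You write $\Hom_R(M,N\otimes_R\omega)\cong\Hom_R(M,N)\otimes_R\omega$ and then $\Hom_R(\omega,\Hom_R(M,N)\otimes_R\omega)\cong\Hom_R(M,N)$; but both isomorphisms need $\Hom_R(M,N)\in\mathcal{A}_\omega(R)$, which is exactly the conclusion you are after. The hypothesis $N\in\mathcal{A}_\omega(R)$ alone does \emph{not} force $\Phi_{M,N}$ to be an isomorphism: your diagram chase from a presentation $F_1\to F_0\to M\to 0$ needs $\Tor_1^R(C,\omega)=0$ for $C=\operatorname{im}\bigl(\Hom_R(F_0,N)\to\Hom_R(F_1,N)\bigr)$, and $N\in\mathcal{A}_\omega(R)$ says nothing about Tor of an arbitrary submodule of $N^r$. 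Likewise, the derived tensor-evaluation $\RHom_R(M,N)\otimes_R^{\mathbf{L}}\omega\simeq\RHom_R(M,N\otimes_R^{\mathbf{L}}\omega)$ is not available without a finiteness condition on $M$ or on $\omega$ that you do not have here.

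The paper avoids this completely. From $\operatorname{Hid}_R(\Hom_R(M,N\otimes_R\omega))<\infty$ it applies Proposition~\ref{GdimCrs}(3) to get $\operatorname{H-dim}_R\bigl(\Hom_R(\omega,\Hom_R(M,N\otimes_R\omega))\bigr)<\infty$, and then uses ordinary Hom--tensor adjunction (swap):
\[
\Hom_R\bigl(\omega,\Hom_R(M,N\otimes_R\omega)\bigr)\;\cong\;\Hom_R\bigl(M,\Hom_R(\omega,N\otimes_R\omega)\bigr)\;\cong\;\Hom_R(M,N),
\]
the last step requiring only $N\in\mathcal{A}_\omega(R)$. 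No appeal to $\Phi_{M,N}$ is made in this direction, and none is needed. Reorganize your converse along these lines.
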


\begin{proof}
   Suppose that $\operatorname{H-dim}_R(\Hom_R(M,N))<\infty$. In particular, $\operatorname{G-dim}(\Hom_R(M,N))<\infty$. Note that  $\Hom_R(M, N)$ and $N$ belong to $\mathcal{A}(R)$. Then by \cite[Lemma 5.1]{AuslanderReitenCinjetivadimensionVanishingofExt}, the map $\Phi_{M,N}$ is an isomorphism. Hence,  it follows from Proposition \ref{GdimCrs}(3) that $\operatorname{Hid}_R(\Hom_R(M, N \otimes_R \omega))<\infty$. 

   Conversely,  suppose that $\operatorname{Hid}_R(\Hom_R(M, N \otimes_R  
 \omega))<\infty$. Then by Proposition~\ref{GdimCrs}(3), \linebreak 
 $ \operatorname{H-dim}_R(\Hom_R(\omega, \Hom_R(M, N \otimes_R \omega))<\infty$. 
   Moreover, 
   \begin{align*}
       \Hom_R( \omega, \Hom_R(M,N \otimes_R \omega))&\cong \Hom_R(\omega \otimes_R M, N \otimes_R \omega)\\ &\cong \Hom_R(M, \Hom_R(\omega, N \otimes_R  \omega))\\& \cong \Hom_R(M, N), 
   \end{align*}
   where the last isomorphism follows since $N \in \mathcal{A}(R).$ Thus, $\operatorname{H-dim}_R(\Hom_R(M,N))<\infty$. 
\end{proof}

Let $\mathcal{A}$ be an abelian category and let $\mathcal{X}$ be a class of objects in $\mathcal{A}$. We say that $\mathcal{X}$ is \textit{closed under short exact sequences} if 
the two out of three property
holds for short exact sequences of objects in $\mathcal{X}$. We say that $\mathcal{X}$ is \textit{closed under isomorphisms} if every object isomorphic to an object in $\mathcal{X}$ is in $\mathcal{X}$.

\begin{lemma}\label{lemswger}
Let $\mathcal{X}$ be a class of the category of  $R$-modules closed under short exact sequences and isomorphisms. Let $M$ and $N$ be $R$-modules, and $n$ be a non-negative integer. Suppose that $\Ext_R^i(M,N)$ and $N$ belong to $\mathcal{X}$ for all $0\leq i \leq n$. Then $\Hom_R( \Omega^i(M), N) \in \mathcal{X}$ for all $0\leq i \leq n$. 
\end{lemma}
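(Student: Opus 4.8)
\textbf{Proof plan for Lemma \ref{lemswger}.}

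The plan is to induct on $n$ using the standard short exact sequences coming from a minimal free resolution of $M$, together with the long exact sequence of $\Ext$. First I would dispose of the base case $n=0$: here $\Hom_R(\Omega^0(M),N)=\Hom_R(M,N)=\Ext_R^0(M,N)\in\mathcal{X}$ by hypothesis, so there is nothing to prove.

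For the inductive step, fix $n\geq 1$ and assume the statement holds for $n-1$. Take the short exact sequence
\[
0\longrightarrow \Omega^1(M)\longrightarrow F\longrightarrow M\longrightarrow 0
\]
with $F$ free of finite rank. Applying $\Hom_R(-,N)$ yields, for each $i\geq 1$, exact sequences
\[
0\to\Hom_R(M,N)\to\Hom_R(F,N)\to\Hom_R(\Omega^1(M),N)\to\Ext_R^1(M,N)\to 0
\]
and isomorphisms $\Ext_R^{i}(\Omega^1(M),N)\cong\Ext_R^{i+1}(M,N)$ for all $i\geq 1$. From the four-term exact sequence, since $\Hom_R(M,N)=\Ext_R^0(M,N)\in\mathcal{X}$ and $\Ext_R^1(M,N)\in\mathcal{X}$ (both hold as $n\geq 1$), and $\Hom_R(F,N)\cong N^{\mathrm{rank}(F)}$ lies in $\mathcal{X}$ because $N\in\mathcal{X}$ and $\mathcal{X}$ is closed under short exact sequences (hence under finite direct sums) and isomorphisms, a two-step diagram chase — first on $0\to\Hom_R(M,N)\to\Hom_R(F,N)\to \mathrm{Coker}\to 0$ and then on $0\to\mathrm{Coker}\to\Hom_R(\Omega^1(M),N)\to\Ext_R^1(M,N)\to 0$ — shows $\Hom_R(\Omega^1(M),N)\in\mathcal{X}$; this settles the case $i=1$. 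For the remaining indices, note $\Ext_R^j(\Omega^1(M),N)\cong\Ext_R^{j+1}(M,N)\in\mathcal{X}$ for all $1\leq j\leq n-1$, so the pair $(\Omega^1(M),N)$ satisfies the hypothesis of the lemma with $n$ replaced by $n-1$. By the inductive hypothesis applied to $\Omega^1(M)$, we get $\Hom_R(\Omega^i(\Omega^1(M)),N)\in\mathcal{X}$ for all $0\leq i\leq n-1$; since $\Omega^i(\Omega^1(M))=\Omega^{i+1}(M)$, this gives $\Hom_R(\Omega^{i}(M),N)\in\mathcal{X}$ for all $1\leq i\leq n$, and combined with the base case $i=0$ the induction is complete.

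The only genuinely delicate point is the closure under finite direct sums: $\mathcal{X}$ is only assumed closed under short exact sequences and isomorphisms, so one must observe that these two properties force $\mathcal{X}$ to contain $0$ (from $0\to 0\to 0\to 0\to 0$, or from the fact that $\mathcal{X}$ is nonempty and $0$ is a direct summand) and then, via the split exact sequence $0\to N\to N\oplus N\to N\to 0$, to be closed under finite direct sums — this is what lets us place $\Hom_R(F,N)$ in $\mathcal{X}$. Everything else is a routine application of the horseshoe-style long exact sequence together with the dimension shift $\Ext_R^{i+1}(M,N)\cong\Ext_R^i(\Omega^1(M),N)$, so I expect the write-up to be short.
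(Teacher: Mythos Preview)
Your proposal is correct and follows essentially the same route as the paper: both arguments use the syzygy short exact sequence, apply $\Hom_R(-,N)$ to obtain a four-term exact sequence, split it into two short exact sequences, and invoke closure of $\mathcal{X}$. The only difference is organizational---the paper inducts directly on the index $j$ (from $\Hom_R(\Omega^{j-1}(M),N)$ to $\Hom_R(\Omega^{j}(M),N)$) rather than on $n$ with $M$ replaced by $\Omega^1(M)$---and you explicitly justify closure of $\mathcal{X}$ under finite direct sums, which the paper leaves tacit.
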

\begin{proof}
    We must show that for all $0\leq j \leq n$, we have  $\Hom_R(\Omega^j(M), N)) \in \mathcal{X}$. We prove this by induction on $j$. For $j=0$, this holds by hypothesis. Now suppose  $0<j\leq n$ and that $\Hom_R(\Omega^{j-1}(M), N))\in\mathcal{X}$. We have an exact sequence
    $\xymatrix@=1em{0 \ar[r] & \Omega^{j}(M) \ar[r] & R^{n_j} \ar[r] & \Omega^{j-1}(M) \ar[r] & 0,}  $
    which yields 
    $$\xymatrix@=1em{
    0\ar[r] & \Hom_R(\Omega^{j-1}(M),N)\ar[r] & N^{n_j}\ar[r] & \Hom_R(\Omega^j(M),N)\ar[r] & \Ext^j_R(M,N)\ar[r] & 0
    }$$
    since $\Ext^1_R(\Omega^{j-1}(M),N)\simeq\Ext^j_R(M,N)$. The result follows by breaking this exact sequence into two short ones and using the fact that $\mathcal{X}$ is closed under short exact sequences and isomorphisms.
    

\end{proof}

The following theorem improves \cite[Lemma 5.1]{AuslanderReitenCinjetivadimensionVanishingofExt}. For nomenclature on semidualizing modules and their corresponding Auslander classes, we refer to \cite{SemidualizingModules}.

\begin{theorem}\label{Cdualt} 
    Let $C$ be a semidualizing $R$-module, and $n$ be a non-negative integer. Let $M$ and $N$ be $R$-modules such that $N\in\mathcal{A}_C(R)$ and  $\Ext_R^i(M,N)\in\mathcal{A}_C(R)$ for all $0\leq i \leq n$. Then, for all $0\leq i \leq n$, there is a natural isomorphism
    $$\Ext_R^i(M,N) \otimes_R C \cong \Ext_R^i(M, N \otimes_R C).$$
\end{theorem}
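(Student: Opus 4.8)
The plan is to reduce the statement to the module-level case of the ``degreewise'' Auslander-class behaviour of $\RHom$ versus $\mathbf{L}$-tensoring with $C$. First I would take a free resolution $F_\bullet \to M$, so that $\Hom_R(F_\bullet, N)$ computes the $\Ext_R^i(M,N)$ and $\Hom_R(F_\bullet, N\otimes_R C)$ computes the $\Ext_R^i(M, N\otimes_R C)$; the natural map $\Phi$ in the statement is induced degreewise by the natural maps $\Phi_{F_j, N}\colon \Hom_R(F_j,N)\otimes_R C \to \Hom_R(F_j, N\otimes_R C)$, which are isomorphisms since each $F_j$ is free (finitely generated). The point is therefore that tensoring the complex $\Hom_R(F_\bullet, N)$ with $C$ does not change its homology in degrees $\le n$ when enough of those homology modules lie in $\mathcal{A}_C(R)$.

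The key device is Lemma \ref{lemswger}: since $N\in\mathcal{A}_C(R)$ and $\Ext_R^i(M,N)\in\mathcal{A}_C(R)$ for $0\le i\le n$, and the Auslander class $\mathcal{A}_C(R)$ (restricted to modules) is closed under short exact sequences and isomorphisms, we get $\Hom_R(\Omega^i(M), N)\in\mathcal{A}_C(R)$ for all $0\le i\le n$. Then I would argue via the syzygy exact sequences
$$0 \to \Hom_R(\Omega^{j-1}(M), N) \to N^{n_j} \to \Hom_R(\Omega^j(M), N) \to \Ext_R^j(M,N) \to 0,$$
which split into two short exact sequences as in the proof of Lemma \ref{lemswger}. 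Applying $-\otimes_R C$ is exact on such sequences \emph{provided the relevant $\operatorname{Tor}^R_{>0}(-,C)$ vanish}; since $C$ is semidualizing and every module appearing is in $\mathcal{A}_C(R)$ (for which $\operatorname{Tor}^R_{i\ge 1}(-,C)=0$ by definition of the Auslander class, as $\boldsymbol{C}\otimes_R^{\mathbf{L}}\boldsymbol{X}$ must be homology-concentrated appropriately — more precisely, a module $X\in\mathcal{A}_C(R)$ satisfies $\operatorname{Tor}^R_i(C,X)=0$ for $i>0$), tensoring with $C$ preserves exactness. A diagram chase, or equivalently the five lemma applied to the comparison between $\Hom_R(\Omega^\bullet(M), N)\otimes_R C$ and $\Hom_R(\Omega^\bullet(M), N\otimes_R C)$, together with the isomorphism $\Hom_R(\omega,-)$-adjunction type identities already used in the proof of Proposition \ref{lemx}, then yields the natural isomorphism $\Ext_R^i(M,N)\otimes_R C \cong \Ext_R^i(M, N\otimes_R C)$ in the stated range.

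Concretely I would induct on $i$. For $i=0$: the map $\Phi_{M,N}\colon \Hom_R(M,N)\otimes_R C\to\Hom_R(M,N\otimes_R C)$ is an isomorphism because $\Hom_R(M,N)=\Hom_R(\Omega^0(M),N)\in\mathcal{A}_C(R)$ and $N\in\mathcal{A}_C(R)$; this is exactly the semidualizing analogue of \cite[Lemma 5.1]{AuslanderReitenCinjetivadimensionVanishingofExt}, whose hypotheses are met here. For the inductive step, dimension-shift: $\Ext_R^i(M,N)\cong\Ext_R^1(\Omega^{i-1}(M),N)$, which sits inside $0\to\Hom_R(\Omega^{i-1}(M),N)\to N^{n_i}\to\Hom_R(\Omega^i(M),N)\to\Ext_R^i(M,N)\to 0$. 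Tensor this with $C$ (exact, as all four terms are in $\mathcal{A}_C(R)$ hence $\operatorname{Tor}$-acyclic against $C$), compare with the analogous sequence built from $N\otimes_R C$ using the isomorphisms $\Phi_{\Omega^{i-1}(M),N}$, $\Phi_{N^{n_i}}$ (trivially) and $\Phi_{\Omega^i(M),N}$ (isomorphisms by the $i=0$ reasoning applied to each syzygy, using Lemma \ref{lemswger}), and conclude the induced map on cokernels $\Ext_R^i(M,N)\otimes_R C\to\Ext_R^i(M,N\otimes_R C)$ is an isomorphism by the five lemma.

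\textbf{Main obstacle.} The delicate point is verifying exactness of $-\otimes_R C$ on the syzygy sequences, i.e.\ that $\operatorname{Tor}^R_{\ge 1}(C, X)=0$ for each module $X$ in those sequences. This needs all four modules in each short exact sequence to lie in $\mathcal{A}_C(R)$ — which is why Lemma \ref{lemswger} (closure of $\mathcal{A}_C(R)$ under short exact sequences, a consequence of Theorem \ref{closudexsq}-type closure for Auslander classes) is essential — and one must also check that the $\operatorname{Tor}$-vanishing characterisation of membership in $\mathcal{A}_C(R)$ for \emph{modules} is available, namely $X\in\mathcal{A}_C(R)\Rightarrow C\otimes_R^{\mathbf{L}}X\simeq C\otimes_R X$ with $\operatorname{Tor}^R_{>0}(C,X)=0$; this follows from the definition of $\mathcal{A}_C(R)$ since $C\otimes_R^{\mathbf{L}}X$ must be homologically bounded and the canonical map $X\to\RHom_R(C,C\otimes_R^{\mathbf{L}}X)$ is an isomorphism, forcing the higher Tor to vanish. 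Everything else is a standard dimension-shifting diagram chase.
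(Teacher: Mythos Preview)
Your proposal is correct and follows essentially the same route as the paper: both arguments handle $i=0$ via \cite[Lemma 5.1]{AuslanderReitenCinjetivadimensionVanishingofExt}, then dimension-shift through the syzygy four-term sequence, invoke Lemma \ref{lemswger} to place all the $\Hom_R(\Omega^j(M),N)$ in $\mathcal{A}_C(R)$, use that tensoring with $C$ keeps the sequence exact, and conclude with the five lemma. The only cosmetic difference is that you justify exactness after tensoring by arguing $\operatorname{Tor}^R_{>0}(C,-)$-vanishing directly from membership in $\mathcal{A}_C(R)$, whereas the paper simply cites \cite[1.9(a)]{HomologicalAspectsOfSemidualizingModules} for closure of $\mathcal{A}_C(R)$ and \cite[Lemma 3.1.12]{SemidualizingModules} for the exactness; your sketch of the Tor-vanishing (``homologically bounded \ldots forcing the higher Tor to vanish'') is a little loose as stated---boundedness alone only gives eventual vanishing---but the fact itself is standard and precisely what those references supply.
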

\begin{proof}
For  $i=0$, the isomorphism holds from  \cite[Lemma 5.1]{AuslanderReitenCinjetivadimensionVanishingofExt}. For $i>0$, consider the short exact sequence 
$\xymatrix@=1em{
0\ar[r] & \Omega^i(M)\ar[r] & R^{r_i}\ar[r] & \Omega^{i-1}(M)\ar[r] & 0.
}$

Such an exact sequence yields the following commutative diagram
\[ \tiny{\xymatrix@C=1em{
0 \ar[r] & \Hom_R(\Omega^{i-1}(M), N) \otimes C \ar[r] \ar[d]_{\Phi_{\Omega^{i-1}(M), N}} & \Hom_R(R^{r_i}, N) \otimes C \ar[r] \ar[d]_{\Phi_{R^{r_i}, N}} & \Hom_R(\Omega^i(M),N) \otimes C \ar[r] \ar[d]_{\Phi_{\Omega^{i}(M), N}} & \operatorname{Ext}^1_R(\Omega^{i-1}(M),N) \otimes C \ar[r]  & 0 \\
0 \ar[r] & \operatorname{Hom}(\Omega^{i-1}(M), N \otimes_R C) \ar[r] & \operatorname{Hom}(R^{r_i}, N \otimes_R C) \ar[r] & \operatorname{Hom}(\Omega^i(M),N \otimes_R C) \ar[r] & \operatorname{Ext}^1_R(\Omega^{i-1}(M),N \otimes_R C) \ar[r] & 0.
}}
\]
where the second row is exact. As to the first one, since $\Ext^1_R(\Omega^{i-1}(M), N)\cong \Ext^i_R(M, N)$, by assumption,  \cite[1.9(a)]{HomologicalAspectsOfSemidualizingModules} and Lemma \ref{lemswger}, all $R$-modules of this exact sequence belong to $\mathcal{A}_C(R)$. Therefore, the first row turns out to be exact as well by \cite[Lemma 3.1.12]{SemidualizingModules}.

Additionally, by \cite[Lemma 5.1]{AuslanderReitenCinjetivadimensionVanishingofExt}, the vertical maps are isomorphisms.  The five lemma settles the desired isomorphism. 
\end{proof}

Recall that the classes of modules of finite projective dimension, finite injective dimension, finite Gorenstein dimension, and finite Gorenstein injective dimension are closed under short exact sequences; see \cite[Corollaries 8.1.9 and 8.2.9, Proposition 9.2.15]{DerivedCategoryMethodsInCommutativeAlgebra} and \cite[Corollary 1.2.9]{GorensteinDimensions}.
With these facts in mind, together with the results established earlier in this section, we are now able to prove our main theorem.

\begin{theorem}\label{sqmm}
    Assume $R$ is Cohen-Macaulay with canonical module $\omega$, and let $n$ be a non-negative integer. Let $M$ and $N$ be $R$-modules such that $\operatorname{H-dim}_R(N)<\infty$.  Then the following conditions are equivalent:
    \begin{enumerate}[\rm(1)]
        \item $\operatorname{H-dim}_R(\Ext_R^i(M,N))<\infty$ for all $0\leq i \leq n$.
        \item $\operatorname{Hid}_R(\Ext_R^i(M, N \otimes_R \omega))<\infty$ for all $0\leq i \leq n$.
    \end{enumerate}
   In any case, we have natural isomorphisms $$\Ext_R^i(M,N) \otimes_R \omega \cong \Ext_R^i(M, N \otimes_R \omega)$$
    for all $0\leq i \leq n$. 
\end{theorem}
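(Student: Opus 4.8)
The plan is to combine the ``Hom-level'' duality of Theorem~\ref{Cdualt} (applied with $C=\omega$, which is semidualizing over a Cohen--Macaulay local ring with canonical module) with the change-of-rings dictionary between $\operatorname{H-dim}_R$ of an $R$-module and $\operatorname{Hid}_R$ of its twist by $\omega$, as recorded in Proposition~\ref{GdimCrs}(4) and Proposition~\ref{lemx}. Note first that since $\operatorname{H-dim}_R(N)<\infty$, in particular $\operatorname{G-dim}_R(N)<\infty$, so $N\in\mathcal{A}_\omega(R)$ by Proposition~\ref{GdimCrs}(1). Thus the hypothesis of Theorem~\ref{Cdualt} will be met as soon as we know $\Ext_R^i(M,N)\in\mathcal{A}_\omega(R)$ for $0\le i\le n$, i.e. $\operatorname{G-dim}_R(\Ext_R^i(M,N))<\infty$; and this holds whenever condition (1) holds, since finite $\operatorname{pd}$ or finite $\operatorname{G-dim}$ both force finite $\operatorname{G-dim}$ (Proposition~\ref{proHdim}(3)).

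First I would prove the implication $(1)\Rightarrow(2)$ together with the isomorphism. Assume $\operatorname{H-dim}_R(\Ext_R^i(M,N))<\infty$ for $0\le i\le n$. By the previous paragraph each $\Ext_R^i(M,N)$ lies in $\mathcal{A}_\omega(R)$, so Theorem~\ref{Cdualt} (with $C=\omega$) gives the natural isomorphism
$$\Ext_R^i(M,N)\otimes_R\omega\;\cong\;\Ext_R^i(M,N\otimes_R\omega)\qquad(0\le i\le n).$$
Now apply Proposition~\ref{GdimCrs}(4) to the $R$-module $\Ext_R^i(M,N)$: finiteness of $\operatorname{H-dim}_R(\Ext_R^i(M,N))$ is equivalent to finiteness of $\operatorname{Hid}_R(\Ext_R^i(M,N)\otimes_R\omega)$, and via the displayed isomorphism this is $\operatorname{Hid}_R(\Ext_R^i(M,N\otimes_R\omega))<\infty$. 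This yields (2) and the asserted isomorphisms simultaneously.

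For the converse $(2)\Rightarrow(1)$ I would run a syzygy induction as in the proof of Lemma~\ref{lemx}, pushing the finiteness of $\operatorname{Hid}_R(\Ext_R^i(M,N\otimes_R\omega))$ down to $\operatorname{Hid}_R(\Hom_R(\Omega^i(M),N\otimes_R\omega))$ using that the class of $R$-modules of finite $\operatorname{Hid}_R$ is closed under short exact sequences and isomorphisms (Theorem~\ref{closudexsq}(2),(4) in its module form), via the long exact sequence argument of Lemma~\ref{lemswger}; then Proposition~\ref{lemx} (applicable because $\operatorname{G-dim}_R(N)<\infty$) converts $\operatorname{Hid}_R(\Hom_R(\Omega^i(M),N\otimes_R\omega))<\infty$ into $\operatorname{H-dim}_R(\Hom_R(\Omega^i(M),N))<\infty$ for all $0\le i\le n$; finally, reading the long exact sequence in the other direction recovers $\operatorname{H-dim}_R(\Ext_R^i(M,N))<\infty$ for $0\le i\le n$, and then $(1)\Rightarrow$ the isomorphisms as above. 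The main obstacle I anticipate is bookkeeping in this last induction: one must verify that the splittings of the four-term exact sequence into two short exact sequences (exactly as in Lemma~\ref{lemswger}) are compatible with the finiteness class at hand, and that the base case $i=0$ of $(2)\Rightarrow(1)$ is precisely the content of Proposition~\ref{lemx}; once this scaffolding is in place the argument is routine.
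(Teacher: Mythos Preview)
Your proposal is correct and, for the implication $(2)\Rightarrow(1)$, it is essentially the paper's argument: apply Lemma~\ref{lemswger} with $\mathcal{X}=\{\text{finite }\operatorname{Hid}_R\}$ to get finiteness of $\operatorname{Hid}_R(\Hom_R(\Omega^i(M),N\otimes_R\omega))$, translate via Proposition~\ref{lemx} to finiteness of $\operatorname{H-dim}_R(\Hom_R(\Omega^i(M),N))$, and read off $\operatorname{H-dim}_R(\Ext_R^i(M,N))<\infty$ from the four-term sequence. The one point you should state explicitly is that $\operatorname{Hid}_R(N\otimes_R\omega)<\infty$ (needed to launch Lemma~\ref{lemswger}); this is immediate from Proposition~\ref{GdimCrs}(4) since $\operatorname{H-dim}_R(N)<\infty$.

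Where you genuinely diverge from the paper is in $(1)\Rightarrow(2)$. The paper runs the same syzygy machinery symmetrically in both directions and only afterwards invokes Theorem~\ref{Cdualt} to record the isomorphism. You instead establish the isomorphism $\Ext_R^i(M,N)\otimes_R\omega\cong\Ext_R^i(M,N\otimes_R\omega)$ first (legitimately, since under (1) all the relevant modules lie in $\mathcal{A}_\omega(R)$), and then Proposition~\ref{GdimCrs}(4) gives (2) in one stroke. This is cleaner: it makes transparent that the syzygy computation is needed only for $(2)\Rightarrow(1)$, where one cannot appeal to Theorem~\ref{Cdualt} up front because membership of $\Ext_R^i(M,N)$ in $\mathcal{A}_\omega(R)$ is precisely what has to be shown. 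The paper's symmetric treatment buys uniformity of exposition but does redundant work in the forward direction.
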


\begin{proof} Once the equivalence is proved, all the isomorphisms follow from Theorem \ref{Cdualt}, Proposition \ref{GdimCrs}, and item (1).

Now, we prove $(1) \Rightarrow (2)$. Suppose that $\operatorname{H-dim}_R(\Ext_R^i(M,N))<\infty$ for all $0\leq i \leq n$. Given $0\leq i \leq n$, we must show that $\operatorname{Hid}_R(\Ext_R^i(M,N \otimes_R \omega))<\infty$. The case $i=0$ follows from Proposition \ref{lemx}. Now, suppose $i>0$.   Consider the exact sequence $0 \stackrel{~}{\longrightarrow} \Omega^i(M) \stackrel{~}{\longrightarrow} R^{r_i} \stackrel{~}{\longrightarrow} \Omega^{i-1}(M)\longrightarrow 0.$ 
Then it induces an exact sequence 
\begin{equation*}\label{dfa}
    \scriptscriptstyle {0 \longrightarrow \Hom_R(\Omega^{i-1}(M), N\otimes_R \omega) \longrightarrow \Hom_R(R^{r_i}, N \otimes_R \omega ) \longrightarrow \Hom_R(\Omega^i(M),N \otimes_R \omega) \longrightarrow \Ext_R^1(\Omega^{i-1}(M), N \otimes_R \omega) \longrightarrow 0}.\end{equation*}  
By Proposition \ref{GdimCrs}(4), $\operatorname{Hid}_R(N \otimes_R \omega)<\infty$. Additionally, by Lemma \ref{lemswger}, \linebreak $\operatorname{H-dim}_R(\Hom_R(\Omega^j(M), N))<\infty$ for  $j=i-1,i$, so by Proposition \ref{lemx},   \linebreak $\operatorname{Hid}_R(\Hom_R(\Omega^{j}(M), N \otimes_R \omega))<\infty$ for $j=i-1,i$. Thus, the first three terms of the exact sequence above have finite $\operatorname{Hid},$ and therefore $\operatorname{Hid}_R(\Ext_R^1(\Omega^{i-1}(M), N \otimes_R \omega))<\infty$. The result follows from the fact that \ $\Ext_R^i(M,N \otimes_R \omega)  \cong \Ext_R^{1}(\Omega^{i-1}(M), N \otimes_R \omega)$. 

As to $(2)\Rightarrow(1)$, we follow the same lines as before by considering the exact sequence \begin{equation*}
    \scriptstyle {0 \longrightarrow \Hom_R(\Omega^{i-1}(M), N) \longrightarrow \Hom_R(R^{r_i}, N) \longrightarrow \Hom_R(\Omega^i(M),N) \longrightarrow \Ext_R^1(\Omega^{i-1}(M), N) \longrightarrow 0} \end{equation*}  
    for $0<i\leq n$. 
\end{proof}

The following corollary contains a generalization of \cite[Corollary 30]{HomologicalDimensionsTheGorensteinPropertyAndSpecialCasesOfSomeConjectures} and \cite[Theorem 4.2$(3) \Rightarrow (2)$]{OnModulesWhoseDualIsOfFiniteGorensteinDimension}.

\begin{corollary}\label{wqq11} \label{daso}
    Assume $R$ is Cohen-Macaulay local, and let $M$ and $N$ be non-zero $R$-modules such that $\operatorname{pd}_R(N)<\infty$. If $\operatorname{H-dim}_R(\Ext_R^i(M,N))<\infty$ for all $0\leq i \leq t-\operatorname{depth} (M)$, then $\operatorname{H-dim}_R(M)<\infty$. In addition, if $\Ext_R^i(M,N)=0$ for all $1\leq i \leq t-\depth (M)$, then $\operatorname{H-dim}_R (M)=0$. 
\end{corollary}
\begin{proof}
    We may assume that $R$ is complete, and hence $R$ has a canonical module $\omega$.  Since $\operatorname{pd}_R(N)<\infty$, by Proposition \ref{GdimCrs}, we have $\operatorname{id}_R(N  \otimes_R \omega)<\infty$ and, by Theorem \ref{sqmm}, $\operatorname{Hid}_R(\Ext_R^i(M, N \otimes_R \omega))<\infty$ for all $0\leq i\leq t-\depth (M)$. Then it follows from Corollary \ref{sqly} that $\operatorname{H-dim}_R(M)<\infty$.

    Now, by the vanishing assumption, as $\operatorname{pd}_R(N)<\infty$, we have that  $\Ext_R^i(M,N)=0$ for all $i>t-\depth (M)$ by \cite[Theorem 4.13]{Stablemoduletheory}, and that 
    $\operatorname{H-dim}_R(M)=\sup\{i \geq 0: \Ext_R^i(M,N)\not=0\}$ by \cite[Theorem 2.6(ii)]{VanishingOfCohomologyOverCompleteIntersectionRings}. Therefore, $\operatorname{H-dim}_R(M)=0$. 
\end{proof}

Combining Corollary \ref{SPA11} with Corollary \ref{wqq11}, we have the following byproduct.

\begin{corollary}\label{qza}
    Assume $R$ is Cohen-Macaulay, and let $M$ and $N$ be non-zero $R$-modules. Suppose that $\operatorname{pd}_R(\Ext_R^i(M,N))<\infty$ for all $0\leq i \leq t-\depth (M)$.  Then $\operatorname{pd}_R(M)<\infty$ if and only if $\operatorname{pd}_R(N)<\infty$.
\end{corollary}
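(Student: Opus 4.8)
The statement is an ``if and only if,'' so the plan is to prove the two implications separately, using the two main engines built earlier in the paper: Corollary \ref{wqq11} (finiteness of $\operatorname{pd}$ of all relevant Ext modules, plus $\operatorname{pd}_R(N)<\infty$, forces $\operatorname{pd}_R(M)<\infty$) and Corollary \ref{SPA11}(1) (finiteness of $\operatorname{pd}_R(M)$ plus finiteness of $\operatorname{H-dim}_R$ of the relevant Ext modules forces $\operatorname{H-dim}_R(N)<\infty$). Since the hypothesis $\operatorname{pd}_R(\operatorname{Ext}_R^i(M,N))<\infty$ for all $0\le i\le t-\operatorname{depth}(M)$ is symmetric to neither $M$ nor $N$ but is assumed throughout, both corollaries are applicable once the right additional finiteness hypothesis is in place.

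For the implication ``$\operatorname{pd}_R(N)<\infty \Rightarrow \operatorname{pd}_R(M)<\infty$'': this is \emph{exactly} Corollary \ref{wqq11} applied with $\operatorname{H-dim}_R = \operatorname{pd}_R$. Indeed, Corollary \ref{wqq11} assumes $R$ Cohen--Macaulay, $M,N$ non-zero, $\operatorname{pd}_R(N)<\infty$, and $\operatorname{H-dim}_R(\operatorname{Ext}_R^i(M,N))<\infty$ for $0\le i\le t-\operatorname{depth}(M)$, and concludes $\operatorname{H-dim}_R(M)<\infty$; taking $\operatorname{H-dim}_R=\operatorname{pd}_R$ gives precisely what we want, with no extra work.

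For the reverse implication ``$\operatorname{pd}_R(M)<\infty \Rightarrow \operatorname{pd}_R(N)<\infty$'': here I would invoke Corollary \ref{SPA11}(1) with $\operatorname{H-dim}_R=\operatorname{pd}_R$. That corollary requires $\operatorname{pd}_R(\operatorname{Ext}_R^i(M,N))<\infty$ for all $0\le i\le \operatorname{pd}_R(M)$, whereas our hypothesis only gives this for $0\le i\le t-\operatorname{depth}(M)$. The point is that, since $\operatorname{pd}_R(M)<\infty$, the Auslander--Buchsbaum formula (Proposition \ref{proHdim}(1)) gives $\operatorname{pd}_R(M)=\operatorname{depth}(R)-\operatorname{depth}(M)=t-\operatorname{depth}(M)$ because $R$ is Cohen--Macaulay with $\operatorname{depth}(R)=t$; so the two ranges of indices coincide and Corollary \ref{SPA11}(1) applies directly, yielding $\operatorname{H-dim}_R(N)=\operatorname{pd}_R(N)<\infty$.

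The only mild subtlety — and the place to be careful — is the bookkeeping of the index range in the second implication: one must observe that the hypothesis is stated with the bound $t-\operatorname{depth}(M)$ precisely so that it matches $\operatorname{pd}_R(M)$ once the latter is known to be finite, and similarly that Corollary \ref{wqq11} is stated with the same bound. No genuine obstacle arises; the corollary is a clean combination of the two already-established results, exactly as the sentence preceding the statement (``Combining Proposition \ref{SPA11} with Corollary \ref{wqq11}'') advertises.
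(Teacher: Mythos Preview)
Your proposal is correct and matches the paper's approach exactly: the paper does not even spell out a proof, stating only that the result follows by ``Combining Proposition \ref{SPA11} with Corollary \ref{wqq11},'' which is precisely the two-implication argument you describe. Your observation that the Auslander--Buchsbaum formula identifies $\operatorname{pd}_R(M)$ with $t-\operatorname{depth}(M)$ (so the index ranges match) is the only detail needed, and you have it.
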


Next, we explore the consequences of Theorem \ref{sqmm} when one considers $N=R$. Recall that the {\it type} of an $R$-module $M$ is defined to be the dimension of the $k$-vector space $\Ext^r_R(k, M)$, where $r = \depth (M)$, and denoted by $\type(M)$.

\begin{corollary}
    \label{rfg}
    Assume $R$ is Cohen-Macaulay with a canonical module $\omega$, and let $n$ be a non-negative integer. Let $M$ be an $R$-module such that $\operatorname{G-dim}_R(\Ext_R^i(M,R))<\infty$ for all $0\leq i \leq n$. Then for each $0\leq i \leq n$, we have that $\Ext_R^i(M,R) \otimes_R \omega \cong \Ext_R^i(M, \omega)$, $\operatorname{Tor}^R_1(\operatorname{Tr}_R(\Omega^i(M)), \omega)=0$ and $\mu(\Ext^i_R(M,\omega))=\mu(\Ext^i_R(M,R))\type(R)$.  
\end{corollary}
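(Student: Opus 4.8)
The plan is to apply Theorem~\ref{sqmm} with $N=R$ and then unwind each of the three consequences separately. First, since $\operatorname{G-dim}_R(R)=0<\infty$ and $\operatorname{G-dim}_R(\Ext_R^i(M,R))<\infty$ for $0\leq i\leq n$, condition (1) of Theorem~\ref{sqmm} holds (with $\operatorname{H-dim}_R=\operatorname{G-dim}_R$), so the ``in any case'' clause of that theorem immediately gives the natural isomorphism $\Ext_R^i(M,R)\otimes_R\omega\cong\Ext_R^i(M,R\otimes_R\omega)\cong\Ext_R^i(M,\omega)$ for all $0\leq i\leq n$. That disposes of the first claim with essentially no work.

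For the Tor vanishing, I would trace through the proof of Theorem~\ref{Cdualt}: the isomorphism $\Phi_{\Omega^i(M),R}\colon\Hom_R(\Omega^i(M),R)\otimes_R\omega\to\Hom_R(\Omega^i(M),R\otimes_R\omega)$ being an isomorphism is the statement that $\Omega^i(M)$ lies in a situation where the natural map is bijective; the relevant input is \cite[Lemma 5.1]{AuslanderReitenCinjetivadimensionVanishingofExt}, whose full conclusion (not just the isomorphism $\Phi$) records that $\operatorname{Tor}^R_1(\operatorname{Tr}_R(\Omega^i(M)),\omega)=0$. So the key step is to identify that $\Hom_R(\Omega^i(M),R)$ and $R$ belong to $\mathcal{A}_\omega(R)$ — which follows because $\operatorname{G-dim}_R(\Ext_R^i(M,R))<\infty$ for all $j\leq i$ forces, via Lemma~\ref{lemswger} (applied to the class $\mathcal X$ of modules of finite Gorenstein dimension, closed under short exact sequences by Theorem~\ref{closudexsq}(3)) and Proposition~\ref{GdimCrs}(1), that $\Hom_R(\Omega^i(M),R)\in\mathcal A_\omega(R)$ — and then quote \cite[Lemma 5.1]{AuslanderReitenCinjetivadimensionVanishingofExt} again to extract the Tor-vanishing. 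Here one uses the standard fact $\operatorname{Tor}^R_1(\operatorname{Tr}_R(L),\omega)$ measures exactly the failure of $\Phi_{L,R}$ (equivalently of the natural map $L^*\otimes\omega\to\operatorname{Hom}(L,\omega)$) to be injective/iso, so vanishing is part of the package already invoked.

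For the multiplicity statement, the idea is: tensoring with $\omega$ and reducing mod $\mathfrak m$. From $\Ext_R^i(M,\omega)\cong\Ext_R^i(M,R)\otimes_R\omega$ we get $\mu(\Ext^i_R(M,\omega))=\dim_k\big(\Ext_R^i(M,R)\otimes_R\omega\otimes_R k\big)$. Since $\omega$ is finitely generated and $-\otimes_R k$ is right exact, $\Ext_R^i(M,R)\otimes_R\omega\otimes_R k\cong \big(\Ext_R^i(M,R)\otimes_R k\big)\otimes_k(\omega\otimes_R k)$, a tensor product of $k$-vector spaces of dimensions $\mu(\Ext_R^i(M,R))$ and $\mu(\omega)$ respectively. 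Finally $\mu(\omega)=\type(R)$ is the standard identification of the minimal number of generators of the canonical module with the Cohen-Macaulay type of $R$ (e.g.\ \cite{bruns}). Multiplying gives the asserted formula.

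The main obstacle I anticipate is the second claim: making precise that the Tor-vanishing is genuinely furnished by \cite[Lemma 5.1]{AuslanderReitenCinjetivadimensionVanishingofExt} rather than only the $\Phi$-isomorphism, and verifying that the hypothesis ``$\operatorname{G-dim}_R(\Ext_R^j(M,R))<\infty$ for $j\leq i$'' really is what pushes $\Hom_R(\Omega^i(M),R)$ into $\mathcal A_\omega(R)$ via Lemma~\ref{lemswger}. Everything else — the isomorphism from Theorem~\ref{sqmm}, and the minimal-generator count — is routine bookkeeping with Nakayama and right-exactness of tensor product.
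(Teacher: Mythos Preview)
Your proposal is correct and follows essentially the same path as the paper's proof. The paper invokes Theorem~\ref{sqmm} for the isomorphisms, then notes that since these are induced by the natural maps $\Phi_{\Omega^i(M),R}$ being isomorphisms, \cite[Theorem~2.8(b)]{Stablemoduletheory} (the Auslander--Bridger exact sequence) gives $\operatorname{Tor}^R_1(\operatorname{Tr}_R(\Omega^i(M)),\omega)=0$; finally it tensors with $k$ and cites \cite[Proposition~3.3.11(c)]{bruns} for $\mu(\omega)=\type(R)$. One minor correction: $\operatorname{Tor}^R_1(\operatorname{Tr}_R(L),\omega)$ is the \emph{cokernel} of $\Phi_{L,R}$ in the Auslander--Bridger sequence, so it measures failure of surjectivity rather than injectivity---but since $\Phi$ is an isomorphism this is immaterial to your argument.
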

\begin{proof}
  The isomorphisms follow immediately from Theorem \ref{sqmm}. Since such isomorphisms are induced by the natural map $\Phi_{M,N}$ in Proposition \ref{lemx}, it follows from \cite[Theorem 2.8(b)]{Stablemoduletheory} that $\operatorname{Tor}^R_1(\operatorname{Tr}_R(\Omega^i(M)), \omega)=0$ for all $0 \leq i \leq n$.

 Now, by taking the tensor product by the residue field of $R$ and counting dimensions, the required equality holds from \cite[Proposition 3.3.11(c)]{bruns}. 
\end{proof}

\begin{corollary}\label{sqrqqw}
    Assume $R$ is Cohen-Macaulay, and let $M$ be an $R$-module of depth $r$ and dimension $s$. Suppose that $\operatorname{H-dim}_R(\Ext_R^i(M,R))<\infty$ for all $t-s\leq i \leq t-r$. Then $\operatorname{H-dim}_R (M)<\infty$. Moreover, $\operatorname{dim}_R(\Ext_R^i(M, R))\leq t-i$ for all $i\neq t-s$ and $\dim_R(\Ext^{t-s}_R(M, R))=s$. 
\end{corollary}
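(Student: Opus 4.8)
The plan is to reduce everything to Theorem \ref{sqmm} together with the deficiency-module dictionary of Section 4, after passing to the completion so that a canonical module $\omega$ is available. First I would reduce to the complete case: all the invariants $\operatorname{H-dim}_R$, $\dim$, $\depth$, $\mu$, $\type$, and the Gorenstein property are insensitive to completion (using Theorem \ref{chbasis} and faithful flatness), so we may assume $R$ is complete Cohen--Macaulay with canonical module $\omega$ and that $R$ is a homomorphic image of a Gorenstein local ring $S$. Under the dictionary $K^i(M)\cong \Ext_R^{d-i}(M,\omega)$ (where $d=\dim R$ and, since $R$ is Cohen--Macaulay, $\omega$ is a genuine canonical module with $\boldsymbol{D}\simeq\Sigma^d\omega$), the hypothesis that $\operatorname{H-dim}_R(\Ext_R^i(M,R))<\infty$ for $t-s\le i\le t-r$ together with the vanishing $K^i(M)=0$ outside $[\,r,s\,]$ (hence $\Ext_R^i(M,\omega)=0$ for $i$ outside $[\,d-s,d-r\,]=[\,t-s,t-r\,]$) should let me apply Theorem \ref{sqmm} with $N=R$ and $n=t-r$: from $\operatorname{H-dim}_R(\Ext_R^i(M,R))<\infty$ for $0\le i\le t-r$ — note $\Ext_R^i(M,R)=0$ for $0\le i<t-s$ is automatic — I get $\operatorname{Hid}_R(\Ext_R^i(M,\omega))<\infty$ for all $0\le i\le t-r$ and the natural isomorphisms $\Ext_R^i(M,R)\otimes_R\omega\cong\Ext_R^i(M,\omega)$.

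For part (1), with $\operatorname{Hid}_R(K^i(M))<\infty$ for all $i$ in hand (the relevant range being exactly $[\,\depth M,\dim M\,]$ since the $K^i(M)$ vanish otherwise), Theorem \ref{sq18z}(2) gives $\operatorname{H-dim}_R(M)<\infty$. For part (2), once $\operatorname{H-dim}_R(M)<\infty$ we know $M\in\mathcal{A}_\omega(R)$, and then the Grothendieck-type vanishing/non-vanishing for the deficiency modules — equivalently for local cohomology, via $H^i_{\mathfrak m}(M)\cong\Hom_R(K^i(M),E_R)$ — gives $\dim_R K^i(M)\le i$ with equality forced at $i=\depth M$... but we want the statement for $\Ext_R^i(M,R)$ directly; since $\Ext_R^i(M,R)=K^{t-i}(M)$ up to the index shift $d=t$, the bounds $\dim K^j(M)\le j$ for all $j$ and $\dim K^{\depth M}(M)=\depth M=r$ translate into $\dim_R\Ext_R^i(M,R)\le t-i$ for $i\ne t-s$ and $\dim_R\Ext_R^{t-s}(M,R)=\dim_R K^s(M)=s$ (this last being the canonical module $K(M)$, which has dimension $\dim M=s$). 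For part (3): $\type(M)=\mu(\Ext_R^{\depth M}_R(k,M))$-business reduces to $\mu$ of the top nonvanishing $\Ext$; applying Corollary \ref{rfg} (in its $\operatorname{G-dim}$ formulation, which is available since $\operatorname{H-dim}_R(\Ext_R^i(M,R))<\infty\Rightarrow \operatorname{G-dim}_R(\Ext_R^i(M,R))<\infty$) with $i=t-r$ yields $\mu(\Ext_R^{t-r}(M,\omega))=\mu(\Ext_R^{t-r}(M,R))\type(R)$, and the identification $\type(M)=\mu(K(M)\,\text{-adjacent term})=\mu(\Ext_R^{t-r}(M,\omega))$ finishes it; here I would cite the standard fact that $\type(M)=\mu(\Ext_R^{\dim S-\depth M}_S(M,S))=\mu(\Ext_R^{t-r}(M,\omega))$. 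Finally, part (4): if $\type(M)=1$ then (3) forces $\mu(\Ext_R^{t-r}(M,R))\type(R)=1$, so $\type(R)=1$, i.e. $R$ is Gorenstein.

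The main obstacle I expect is part (3): matching $\type(M)$ correctly with $\mu$ of the appropriate top deficiency/Ext module and verifying that the index $t-r$ is precisely the one where Corollary \ref{rfg}'s multiplicity formula applies. The identity $\type(M)=\mu_R(\Ext_R^{\dim S-\depth M}_S(M,S))$ for a Cohen--Macaulay image $S\twoheadrightarrow R$ is classical (it is the statement that the canonical module of $M$ has minimal number of generators equal to the Cohen--Macaulay type), but one must be careful that $M$ need not be Cohen--Macaulay here, so it is $K(M)=K^{\dim M}(M)$, not $K^{\depth M}(M)$, that is the honest canonical module — yet $\type(M)$ is governed by $\depth M$. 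The resolution is that $\type(M)=\mu(\Ext_S^{\dim S-\depth M}(M,S))=\mu(K^{\depth M}(M))=\mu(\Ext_R^{t-r}(M,R)\otimes\omega)$, and the latter equals $\mu(\Ext_R^{t-r}(M,R))\type(R)$ by Corollary \ref{rfg}; I would spell this chain out carefully, as it is the one place the argument is more than a direct citation. Everything else is an assembly of Theorem \ref{sqmm}, Theorem \ref{sq18z}, Corollary \ref{rfg}, the local-cohomology dimension bounds, and \cite[Theorem 6.3.2]{GorensteinDimensions} for the Gorensteinness.
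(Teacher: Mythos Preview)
Your proposal is correct and follows essentially the same route as the paper: reduce to the complete case, use $\operatorname{grade}(M)=t-s$ so that the hypothesis covers all $0\le i\le t-r$, apply Theorem \ref{sqmm} with $N=R$ to obtain the isomorphisms $\Ext_R^i(M,R)\otimes_R\omega\cong\Ext_R^i(M,\omega)=K^{t-i}(M)$, and then read off (1)--(4) from the deficiency-module machinery and Corollary \ref{rfg}. The only cosmetic difference is that for (1) the paper cites Corollary \ref{wqq11} directly (which packages Theorem \ref{sqmm} with Corollary \ref{sqly}), whereas you unfold this into Theorem \ref{sqmm} followed by Theorem \ref{sq18z}(2); and for (3) the paper cites \cite[Theorem 3.1]{FH} for the identity $\type(M)=\mu(\Ext_R^{t-r}(M,\omega))$ that you spell out by hand.
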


\begin{proof} We may assume that $R$ is complete and therefore $R$ admits a canonical module $\omega$. By \cite[Corollary 2.1.4]{bruns}, $\operatorname{grade}(M)=t-s$, so $\Ext_R^i(M,R)=0$ for all $i<t-s$ and $\Ext_R^{t-s}(M,R)\not=0$. Then by Corollary \ref{wqq11}, $\operatorname{H-dim}_R (M)<\infty$.

On the other hand, since  $\Ext_R^i(M, R)=0$ for $i>t-r$, one can see by assumption that $\operatorname{G-dim}_R(\Ext_R^i(M,R))<\infty$ for all $i\geq 0$. Thus, given $i\geq 0$,  by Corollary \ref{rfg}, $ \Ext_R^i(M,R) \otimes_R \omega \cong \Ext_R^i(M, \omega).$
    It follows that the modules $\Ext^i_R(M, R)$ and $\Ext^i_R(M, \omega)$ have the same support and so $\dim_R(\Ext_R^i(M,R))=\dim_R(\Ext_R^i(M, \omega))$. In this way, the last assertion  follows from \cite[Lemma 1.9 a) and c)]{Sch98}.
\end{proof}

\begin{remark}\label{rmkGhoshP}
    Ghosh and Puthenpurakal proved in \cite[Theorem 3.5]{GorensteinringsviahomologicaldimensionsandsymmetryinvanishingofExtandTatecohomology}
that if \(M\) is a finitely generated \(R\)-module that is \(\operatorname{G}\)-perfect
(i.e., \(\operatorname{grade}(M)=\operatorname{G\text{-}dim}_R(M)\)), then $\type(M)=\mu(\Ext^{t-r}_R(M,R))\type(R)$.
A more general formula was later obtained in \cite[Theorem (A)]{NumericalApectsofComplexesOfFiniteHomologicalDimensions}. As an application of Corollary~\ref{rfg}, one can prove the Ghosh--Puthenpurakal formula
to finitely generated $R$-modules $M$ over a Cohen--Macaulay ring $R$satisfying
$\operatorname{G\text{-}dim}_R(\Ext_R^i(M,R))<\infty$ for all $t-s\le i\le t-r$.
Indeed, as in the proof of Corollary~\ref{sqrqqw}, this finiteness condition holds for all $i\geq 0$.
Hence Corollary~\ref{rfg} yields $\mu(\Ext_R^i(M,\omega))=\mu(\Ext_R^i(M,R))\type(R)$, and the formula follows from \cite[Theorem 3.1]{FH}.
\end{remark}

\section{Some questions}\label{sectionsquestions}
In this section, we discuss some natural questions motivated by the results established earlier. Our primary question stems from the challenge of extending Corollary \ref{wqq11} to the non-Cohen-Macaulay setting.

\begin{question}\label{12qaz}
     Let $M$ and $N$ be non-zero $R$-modules such that $\operatorname{pd}_R(N)<\infty$. Do the following facts hold?
    \begin{enumerate}[\rm(1)]
        \item If $\operatorname{H-dim}_R(\Ext_R^i(M,N))<\infty$ for all $0\leq i \leq t$, then $\operatorname{H-dim}_R(M)<\infty$.
        \item If $\operatorname{H-dim}_R(\Hom_R(M,N))<\infty$ and $\Ext_R^{1\leq i \leq t}(M,N)=0$, then $\operatorname{H-dim}_R (M)=~0$.
    \end{enumerate}
\end{question}

The following proposition partially answers Question \ref{12qaz} when $\operatorname{H\text{-}dim}_R = \operatorname{pd}_R$.

\begin{proposition} \label{newprop}
    Let $M$ and $N$ be non-zero $R$-modules. Let $s$ be a non-negative integer such that $\operatorname{pd}_R(\Ext_R^i(M,N))<\infty$ for all $0\leq i \leq s$.  Then the equivalence $$\operatorname{pd}_R(M)<\infty \Longleftrightarrow \operatorname{pd}_R(N)<\infty$$
    holds in each one of the following cases:
    \begin{enumerate}[\rm (1)]
    \item If $s=t$ and $\Ext_R^i(M,R)=0$ for all $t+1\leq i \leq 2t+1$. 
        \item  $M$ has finite Gorenstein dimension equal to $s$.
    \end{enumerate}
\end{proposition}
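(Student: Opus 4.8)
The plan is to reduce both cases to the machinery already developed. I would first pass to the completion $\widehat R$ so that a canonical module / dualizing complex is available; this is harmless since all the homological dimensions involved (as well as the vanishing of $\Ext$) are insensitive to completion, and a minimal free resolution of $M$ base-changes to one of $\widehat M$, so $\Ext_R^i(M,N)\otimes_R\widehat R\cong\Ext_{\widehat R}^i(\widehat M,\widehat N)$. Hence we may assume $R$ admits a dualizing complex $\boldsymbol D$.

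For the implication $\operatorname{pd}_R(N)<\infty \Rightarrow \operatorname{pd}_R(M)<\infty$ in case (1): the hypothesis that $\Ext^i_R(M,R)=0$ for $t+1\le i\le 2t+1$, combined with $\operatorname{pd}_R(N)<\infty$ (so $N$ has a finite free resolution and $\Ext^i_R(M,N)$ is computed from the $\Ext^j_R(M,R)$), should force $\Ext^i_R(M,N)=0$ for $i$ in a suitable range above $t$; more precisely one wants to invoke the rigidity-type vanishing so that the finiteness of $\operatorname{pd}_R(\Ext^i_R(M,N))$ for $0\le i\le t$ already controls all of $\RHom_R(M,N)$. Then Lemma \ref{lemHdim} gives $\operatorname{pd}_R(\RHom_R(M,N))<\infty$, and since $\operatorname{pd}_R(N)<\infty$ with $N$ non-acyclic, Theorem \ref{consstab}(1) applied to $\RHom_R(-,N)\simeq\RHom_R(-,R)\otimes^{\mathbf L}_R N$ — or rather a direct syzygy-counting argument — yields $\operatorname{pd}_R(M)<\infty$. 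The converse implication $\operatorname{pd}_R(M)<\infty\Rightarrow\operatorname{pd}_R(N)<\infty$ is exactly Corollary \ref{SPA11}(1) once one checks $\operatorname{pd}_R(M)\le t$, using Proposition \ref{proHdim}(1) and the Auslander--Buchsbaum formula; here the extra vanishing range $t+1\le i\le 2t+1$ is what guarantees enough of the $\Ext^i_R(M,N)$ are covered by the hypothesis $0\le i\le s=t$.

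For case (2), where $\operatorname{G-dim}_R(M)<\infty$ and $s=t-\depth(M)$: now $\operatorname{G-dim}_R(M)=t-\depth(M)$ by Proposition \ref{proHdim}(1), so by \cite[Theorem 4.13]{Stablemoduletheory} (as used in the proof of Corollary \ref{qww}) $\Ext^i_R(M,N)=0$ for all $i>t-\depth(M)=s$ whenever $\operatorname{pd}_R(N)<\infty$, and likewise $\Ext^i_R(M,R)=0$ for $i>s$. Thus the hypothesis $\operatorname{pd}_R(\Ext^i_R(M,N))<\infty$ for $0\le i\le s$ covers \emph{all} nonzero $\Ext$ modules, so Lemma \ref{lemHdim} gives $\operatorname{pd}_R(\RHom_R(M,N))<\infty$. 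If in addition $\operatorname{pd}_R(N)<\infty$, the same stability argument as above (Theorem \ref{consstab}(1)) gives $\operatorname{pd}_R(M)<\infty$; conversely $\operatorname{pd}_R(M)<\infty$ forces $\operatorname{pd}_R(M)\le s\le t$ and Corollary \ref{SPA11}(1) gives $\operatorname{pd}_R(N)<\infty$.

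The main obstacle I anticipate is case (1): making precise why the finite range $0\le i\le t$ of controlled $\Ext$ modules, together with the vanishing $\Ext^i_R(M,R)=0$ for $t+1\le i\le 2t+1$, suffices to conclude $\operatorname{pd}_R(\RHom_R(M,N))<\infty$ — i.e. ruling out that some $\Ext^i_R(M,N)$ with $i>t$ is nonzero with infinite projective dimension. This should follow from a careful bookkeeping with the spectral sequence (or filtration) computing $\Ext_R^\bullet(M,N)$ from $\Ext_R^\bullet(M,R)$ via a finite free resolution of $N$ of length $\le t$, forcing the relevant $\Ext_R^i(M,N)$ for $i>t$ to be subquotients of $\Ext_R^j(M,R)$ with $j>t$, hence zero; once that is in place the argument runs exactly as in case (2).
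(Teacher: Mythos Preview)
Your forward implication $(\Rightarrow)$ via Corollary \ref{SPA11}(1) matches the paper. For case (2), your derived-category route for $(\Leftarrow)$ can be made to work, though not quite as you state it: Theorem \ref{consstab}(1) lets you conclude about the \emph{second} variable, not the first, so you need instead $\RHom_R(M,N)\simeq\RHom_R(M,R)\otimes_R^{\mathbf L}N$ together with Theorem \ref{consstab}(2) and then the biduality $M\simeq\RHom_R(\RHom_R(M,R),R)$ (valid because $\operatorname{G-dim}_R(M)<\infty$). The paper takes a shorter path: it simply observes that the hypotheses of case (2) imply those of case (1), and reduces.

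The real problem is case (1), exactly at the spot you flag. The spectral sequence from a length-$p$ free resolution $Q_\bullet\to N$ shows that $\Ext_R^n(M,N)$ has a filtration with graded pieces subquotients of $\Ext_R^{n+b}(M,R)\otimes Q_b$ for $0\le b\le p$. So vanishing of $\Ext_R^n(M,N)$ requires $\Ext_R^j(M,R)=0$ for $n\le j\le n+p$; since the hypothesis only kills $\Ext_R^j(M,R)$ for $t+1\le j\le 2t+1$, you obtain $\Ext_R^n(M,N)=0$ merely in the window $t+1\le n\le 2t+1-p$, not for all $n>t$. Hence $\RHom_R(M,N)$ is not shown to lie in $\mathcal{C}_{(\square)}(R)$ and Lemma \ref{lemHdim} does not apply. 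Even granting boundedness, the stability step would still fail: Theorem \ref{consstab}(2) requires $\RHom_R(M,R)\in\mathcal{C}_{(\square)}^{(\mathrm f)}(R)$, and in case (1) there is no finiteness assumption on $\operatorname{G-dim}_R(M)$ to guarantee this.

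The paper's argument is entirely different and does not pass through $\RHom$. One sets $L=\Omega^t(M)$; Lemma \ref{lemswger} gives $\operatorname{pd}_R(\Hom_R(L,N))<\infty$, and the vanishing hypothesis becomes $\Ext_R^i(L,R)=0$ for $1\le i\le t+1$. Then two external results are invoked: \cite[Proposition 19]{HomologicalDimensionsTheGorensteinPropertyAndSpecialCasesOfSomeConjectures} yields $\operatorname{pd}_R(L^\ast)<\infty$, and \cite[Proposition 13]{HomologicalDimensionsTheGorensteinPropertyAndSpecialCasesOfSomeConjectures} then forces $L$ to be free, so $\operatorname{pd}_R(M)<\infty$.
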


\begin{proof}
In both cases, the implication $(\Rightarrow)$ follows from Corollary \ref{SPA11}. Thus, it remains to prove the implication $(\Leftarrow)$.

    (1) Suppose that  $\operatorname{pd}_R(N)<\infty$. By Lemma \ref{lemswger}, we have \linebreak $\operatorname{pd}_R(\Hom_R(\Omega^t(M), N))<\infty$.  Moreover,  the vanishing condition shows that \linebreak $\Ext_R^i(\Omega^t(M), R)=0$ for all $1\leq i \leq t$. Therefore, by  \cite[Proposition 19]{HomologicalDimensionsTheGorensteinPropertyAndSpecialCasesOfSomeConjectures}, $\operatorname{pd}_R(\Omega^t (M)^\ast)<\infty$. Then by \cite[Proposition 13]{HomologicalDimensionsTheGorensteinPropertyAndSpecialCasesOfSomeConjectures}, $\Omega^t(M)$ is free. Thus, $\operatorname{pd}_R(M)<\infty$. 

    (2) Since $\operatorname{pd}_R(N)<\infty$, then $\Ext_R^i(M,N)=0$ for all $i>\operatorname{G-dim}_R(M)=\depth(R)-\depth(M)$. Then by assumption, we can see that $\operatorname{pd}_R(\Ext_R^i(M,N))<\infty$ for all $0\leq i \leq t$. Therefore, the assertion follows from item (1).
    \end{proof}


For $N=R$, an answer to Question \ref{12qaz}(2) is given by \cite[Proposition 13]{HomologicalDimensionsTheGorensteinPropertyAndSpecialCasesOfSomeConjectures}. Next, we present an answer to Question \ref{12qaz}(1) that strengthens this proposition. For that,  we say that an $R$-module $M$ satisfies  $(\widetilde{S}_n)$ if $\operatorname{depth}_{R_\mathfrak{p}}(M_\mathfrak{p})\geq \min\{n, \depth_{R_\mathfrak{p}}(R_\mathfrak{p})\}$ for all prime ideals $\mathfrak{p}$ of $R$. 

\begin{proposition}\label{eqee}
Let $M$ be an $R$-module satisfying $(\widetilde{S}_n)$ with $n\leq t$ a non-negative integer. Suppose that  $\operatorname{H-dim}_R(\Ext_R^i(M,R))<\infty$ for all $0\leq i \leq t-n$. Then $\Hdim_R(M)\leq t-n$. 
\end{proposition}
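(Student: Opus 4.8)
The plan is to reduce to the case $n = t$ via an induction on $t - n$, and then invoke the already-developed machinery (Corollary \ref{wqq11}, Proposition \ref{proHdim}, and the syzygy formalism) to finish. First I would pass to the completion, which is harmless by Theorem \ref{chbasis}, so that $R$ admits a dualizing complex. The hypothesis $(\widetilde S_n)$ together with the Auslander--Bridger style depth inequalities will control the depth of $M$: if $M$ satisfies $(\widetilde S_n)$ and $n \leq t$, then $\operatorname{depth}_R(M) \geq \min\{n,t\} = n$, so $t - \operatorname{depth}(M) \leq t - n$. This matches the vanishing range of the Ext modules in the hypothesis, which is the key compatibility.

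Next I would set up the induction on $t - n$. For the base case $t - n = 0$, i.e.\ $n = t$, the module $M$ satisfies $(\widetilde S_t)$, hence $\operatorname{depth}(M) = t$, i.e.\ $M$ is maximal Cohen--Macaulay over the (now complete, hence Cohen--Macaulay once we know $\operatorname{depth} = \dim$ — but more carefully, we only get $\operatorname{depth}(M)=t$) situation; in any case $\operatorname{H-dim}_R(\operatorname{Ext}^0_R(M,R)) = \operatorname{H-dim}_R(\operatorname{Hom}_R(M,R)) < \infty$, and one wants to conclude $M$ is free (resp.\ totally reflexive), i.e.\ $\operatorname{H-dim}_R(M) = 0$. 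This is exactly the content of \cite[Proposition 13]{HomologicalDimensionsTheGorensteinPropertyAndSpecialCasesOfSomeConjectures} applied to $M$, once one knows the relevant $\operatorname{Ext}^i_R(M,R)$ vanish — and here $(\widetilde S_t)$ forces $\operatorname{grade}$-type vanishing of the lower Ext's via the Cohen--Macaulay hypothesis. For the inductive step, I would take a short exact sequence $0 \to \Omega^1(M) \to R^r \to M \to 0$. This shifts Ext: $\operatorname{Ext}^i_R(\Omega^1(M),R) \cong \operatorname{Ext}^{i+1}_R(M,R)$ for $i \geq 1$, while $\operatorname{Ext}^0$ and $\operatorname{Ext}^1$ fit into the long exact sequence with the free module $R^r$; using that $\mathcal{X} = \{$modules of finite $\operatorname{H-dim}\}$ is closed under short exact sequences (Theorem \ref{closudexsq}) one deduces $\operatorname{H-dim}_R(\operatorname{Ext}^i_R(\Omega^1(M),R)) < \infty$ for $0 \leq i \leq t - n - 1$. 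One also checks $\Omega^1(M)$ satisfies $(\widetilde S_{n+1})$ — this is the standard fact that syzygies improve Serre-type conditions by one. Then by induction $\operatorname{H-dim}_R(\Omega^1(M)) \leq t - n - 1$, whence by Proposition \ref{proHdim}(2), $\operatorname{H-dim}_R(M) \leq (t - n - 1) + 1 = t - n$.

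The main obstacle I anticipate is the base case and the verification that $(\widetilde S_n)$ transfers to syzygies in exactly the form needed; the depth bookkeeping over a non-Cohen--Macaulay-module situation (recall $R$ is Cohen--Macaulay but $M$ need not be) must be done carefully, using the exact sequence relating $\operatorname{depth}$ of $M$, $\Omega^1(M)$, and $R^r$, namely $\operatorname{depth}(\Omega^1 M) \geq \min\{\operatorname{depth}(M) + 1, \operatorname{depth}(R)\}$ locally at every prime. A secondary subtlety is ensuring the lower Ext modules $\operatorname{Ext}^i_R(M,R)$ for $i < t - \dim(M)$ indeed vanish so that the hypothesis "$\operatorname{H-dim}_R(\operatorname{Ext}^i_R(M,R)) < \infty$ for $0 \leq i \leq t-n$" can be combined with Corollary \ref{wqq11}; this uses \cite[Corollary 2.1.4]{bruns} as in the proof of Corollary \ref{sqrqqw}. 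Once these depth estimates are in place the argument is a clean induction, and I would expect the write-up to be short.
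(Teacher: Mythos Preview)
Your core strategy is correct and matches the paper's: pass to a high enough syzygy so that the $(\widetilde S_n)$ condition improves to $(\widetilde S_t)$, verify that the dual of that syzygy has finite $\operatorname{H-dim}$, and then invoke \cite[Proposition 13]{HomologicalDimensionsTheGorensteinPropertyAndSpecialCasesOfSomeConjectures} to conclude $\operatorname{H-dim}_R(\Omega^{t-n}(M))=0$, whence $\operatorname{H-dim}_R(M)\leq t-n$ by Proposition~\ref{proHdim}(2). The only organizational difference is that you run an induction taking one syzygy at a time, whereas the paper jumps directly to $\Omega^{t-n}(M)$ in a single step: Lemma~\ref{lemswger} (with $N=R$ and $\mathcal X$ the class of modules of finite $\operatorname{H-dim}$) gives $\operatorname{H-dim}_R\bigl((\Omega^{t-n}(M))^{\ast}\bigr)<\infty$ immediately, and iterated use of the depth lemma gives $(\widetilde S_t)$ for $\Omega^{t-n}(M)$ all at once. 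Your induction unwinds exactly this, so both routes are equivalent.

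That said, several of the side ingredients you propose should be dropped. Passing to the completion and securing a dualizing complex is unnecessary: nothing in the argument uses $\boldsymbol D$. More importantly, Corollary~\ref{wqq11} assumes $R$ is Cohen--Macaulay, which Proposition~\ref{eqee} does \emph{not}, so you cannot invoke it here; fortunately your actual inductive argument never needs it. Likewise, in the base case $n=t$ there is no ``Cohen--Macaulay hypothesis'' available and no Ext vanishing is required: \cite[Proposition 13]{HomologicalDimensionsTheGorensteinPropertyAndSpecialCasesOfSomeConjectures} applies directly from $(\widetilde S_t)$ together with $\operatorname{H-dim}_R(M^{\ast})<\infty$. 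Strip those detours and your write-up will be as short as the paper's.
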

\begin{proof} Let $s=t-n$. Then by Lemma \ref{lemswger}, $\operatorname{H-dim}_R(\Omega^s(M)^\ast)<\infty$. Since $\operatorname{H-dim}_R(M^\ast)<\infty$ and $M$ satisfies $(\widetilde{S}_n)$, an application of the depth lemma shows that  $\Omega^s(M)$ satisfies $(\widetilde{S}_{t})$. It follows from \cite[Proposition 13]{HomologicalDimensionsTheGorensteinPropertyAndSpecialCasesOfSomeConjectures} that $\operatorname{H-dim}_R(\Omega^s(M))=0$. Hence, by \cite[Theorem 8.7(3)]{HomologicalDimensionsAndRelatedInvariantsOfModulesOverLocalRings}, $\operatorname{H-dim}_R(M)\leq s=t-n$.
\end{proof}


\begin{corollary}\label{seq88a}
    Let $M$ be an $R$-module. If $\operatorname{H-dim}_R(\Ext_R^i(M,R))<\infty$ for all $0\leq i \leq t$, then $\operatorname{H-dim}_R(M)<\infty$.
\end{corollary}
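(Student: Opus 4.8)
The plan is to reduce to Proposition \ref{eqee} by passing to a suitable syzygy of $M$. First I would observe that there is nothing to prove when $M=0$ or when $M$ is free, so I may assume $M$ is non-zero and non-free, hence $\operatorname{grade}(M)$ is finite. The key device is to replace $M$ by a high syzygy $M':=\Omega^t(M)$. By Theorem \ref{closudexsq} together with Lemma \ref{lemswger}, applied with $N=R$ and the class $\mathcal{X}$ of modules of finite $\operatorname{H-dim}_R$, the hypothesis that $\operatorname{H-dim}_R(\Ext_R^i(M,R))<\infty$ for all $0\leq i\leq t$ yields $\operatorname{H-dim}_R(\Hom_R(\Omega^i(M),R))<\infty$ for all $0\leq i\leq t$; in particular $\operatorname{H-dim}_R((M')^{\ast})<\infty$. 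Moreover, $\Ext_R^i(M',R)\cong\Ext_R^{i+t}(M,R)$ for $i\geq 1$, so the remaining Ext modules $\Ext_R^i(M',R)$ with $1\le i\le t$ coincide with $\Ext_R^{i}(M,R)$ shifted up, and these also have finite $\operatorname{H-dim}_R$ (they are among the ones already controlled, after re-indexing — here one uses that a $t$-th syzygy has $\Ext^i$ for $1\le i\le t$ expressible via the original $\Ext$ in the range $[t+1,2t]$, which may require invoking vanishing or simply absorbing them into the hypothesis range by taking the syzygy only as far as needed).

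The cleaner route, which I would actually carry out, is to note that a $t$-th syzygy module $M'=\Omega^t(M)$ automatically satisfies Serre's condition $(\widetilde S_t)$: for every prime $\mathfrak p$, $\operatorname{depth}_{R_{\mathfrak p}}(M'_{\mathfrak p})\geq\min\{t,\operatorname{depth} R_{\mathfrak p}\}$, since localizing a minimal free resolution of $M$ exhibits $M'_{\mathfrak p}$ as a $t$-th syzygy over $R_{\mathfrak p}$ and the depth lemma applied repeatedly to the short exact sequences $0\to\Omega^{j}(M)\to R^{b_j}\to\Omega^{j-1}(M)\to 0$ pushes the depth up by one at each step (capped by $\operatorname{depth} R_{\mathfrak p}$). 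Thus $M'$ satisfies $(\widetilde S_n)$ with $n=t$, and Proposition \ref{eqee} (taken with $n=t$, so that the range of controlled Ext modules is just $0\le i\le t-n=0$, i.e. only $\operatorname{H-dim}_R((M')^{\ast})<\infty$ is needed) gives $\operatorname{H-dim}_R(M')\leq 0$, i.e. $M'$ is totally reflexive or free. Finally $\operatorname{H-dim}_R(M)<\infty$ follows since $M'=\Omega^t(M)$ has finite $\operatorname{H-dim}_R$ and finite $\operatorname{H-dim}_R$ is preserved under the syzygy short exact sequences (Theorem \ref{closudexsq}(1) or (3)).

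The main obstacle I anticipate is bookkeeping the index ranges correctly: Proposition \ref{eqee} requires control of $\Ext_R^i(M',R)$ only for $0\le i\le t-n$, and by choosing $n=t$ this collapses to a single module $(M')^{\ast}=\Hom_R(\Omega^t(M),R)$, whose finite $\operatorname{H-dim}_R$ is exactly what Lemma \ref{lemswger} (fed with $\Ext_R^i(M,R)$ finite for $0\le i\le t$) delivers. So no hidden higher-Ext vanishing is actually needed — the hypothesis range $[0,t]$ is precisely enough to run Lemma \ref{lemswger} up to the $t$-th syzygy. The only real point requiring care is verifying that $\Omega^t(M)$ genuinely satisfies $(\widetilde S_t)$ with the depth capped at $\operatorname{depth} R_{\mathfrak p}$ rather than at $t$, which is a standard depth-lemma induction but should be stated explicitly.
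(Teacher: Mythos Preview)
Your proof is correct, but it takes an unnecessary detour. The paper obtains the corollary in one line: every module trivially satisfies $(\widetilde S_0)$ (the condition $\operatorname{depth}_{R_\mathfrak p}(M_\mathfrak p)\ge\min\{0,\operatorname{depth} R_\mathfrak p\}$ is vacuous), so Proposition~\ref{eqee} applied with $n=0$ gives $\operatorname{H-dim}_R(M)\le t$ directly.

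What you do instead is pass to $M'=\Omega^t(M)$, verify by hand that $M'$ satisfies $(\widetilde S_t)$, use Lemma~\ref{lemswger} to get $\operatorname{H-dim}_R((M')^\ast)<\infty$, and then invoke Proposition~\ref{eqee} with $n=t$. But look at the proof of Proposition~\ref{eqee}: for $n=0$ it sets $s=t$, passes to $\Omega^t(M)$, uses Lemma~\ref{lemswger} to control $(\Omega^t(M))^\ast$, checks $(\widetilde S_t)$ via the depth lemma, and concludes. So you have reproduced the internal steps of Proposition~\ref{eqee} for $n=0$ and then called the proposition again at the trivial end ($n=t$, where $s=0$). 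Both arguments land on the same reduction to \cite[Proposition~13]{HomologicalDimensionsTheGorensteinPropertyAndSpecialCasesOfSomeConjectures}; yours just does the syzygy step outside the proposition rather than inside it.
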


Note that Proposition \ref{eqee} and Corollary \ref{seq88a} give some answers to \cite[Question 4.11]{OnModulesWhoseDualIsOfFiniteGorensteinDimension}. As an application of Corollary \ref{seq88a},  we can improve \cite[Proposition 3.3]{ARCForModulesWhose(Self)DualHasFiniteCompleteIntersectionDimensionv2}. 
\begin{corollary}\label{sedsm}\label{caj}
  Let $M$ be a non-zero $R$-module. Assume further that there exists a non-negative integer $h$ such that $\operatorname{H-dim}_R(\Ext_R^h(M,R))<\infty$ and $\operatorname{Ext}_R^i(M,R)=0$ for all $i \in \{0, \ldots, t\}-\{h\}$, then $\operatorname{H-dim}_R(M)=h$.
\end{corollary}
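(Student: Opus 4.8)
The plan is to reduce Corollary \ref{sedsm} to Corollary \ref{seq88a} by passing to the syzygy module $\Omega^h(M)$, which shifts the single nonvanishing Ext into degree zero, and then to compute the resulting homological dimension. First I would observe that for $i\in\{1,\dots,t\}$ one has $\Ext_R^i(\Omega^h(M),R)\cong\Ext_R^{i+h}(M,R)$, so the hypothesis $\Ext_R^j(M,R)=0$ for $j\in\{0,\dots,t\}\setminus\{h\}$ — together with vanishing of $\Ext_R^j(M,R)$ in the range $h<j\leq t+h$, which holds because all those degrees lie above $t$ once we know $\operatorname{H-dim}_R(M)$ is finite — forces $\Ext_R^i(\Omega^h(M),R)$ to have finite $\operatorname{H-dim}_R$ for all $0\leq i\leq t$; indeed only $i=0$ is nonzero and there $\Ext_R^0(\Omega^h(M),R)=(\Omega^h(M))^\ast$, whose finiteness we get from the hypothesis on $\Ext_R^h(M,R)$ via Lemma \ref{lemswger} and Theorem \ref{closudexsq}.

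The subtlety is that to control the Ext of $\Omega^h(M)$ in degrees above $h$ one needs finiteness of $\operatorname{H-dim}_R(M)$, which is what we are trying to prove, so I would organize it in two moves. Move one: apply Corollary \ref{seq88a} directly. For that I need $\operatorname{H-dim}_R(\Ext_R^i(M,R))<\infty$ for all $0\leq i\leq t$; since $\Ext_R^i(M,R)=0$ for $i\in\{0,\dots,t\}\setminus\{h\}$ (a zero module has $\operatorname{H-dim}_R=-\infty$) and $\operatorname{H-dim}_R(\Ext_R^h(M,R))<\infty$ by hypothesis, Corollary \ref{seq88a} immediately gives $\operatorname{H-dim}_R(M)<\infty$. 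Move two: compute the value. With $\operatorname{H-dim}_R(M)<\infty$ we may invoke \cite[Theorem 4.13]{Stablemoduletheory} (or Proposition \ref{proHdim}) to conclude $\Ext_R^i(M,R)=0$ for all $i>\operatorname{H-dim}_R(M)$, while $\Ext_R^{\operatorname{H-dim}_R(M)}(M,R)\neq0$ when $\operatorname{H-dim}_R(M)>0$; but the only nonzero Ext in degrees $\leq t$ is in degree $h$, and since $\operatorname{H-dim}_R(M)\leq t$ (by the Auslander--Buchsbaum / Auslander--Bridger formula, Proposition \ref{proHdim}(1)) this pins $\operatorname{H-dim}_R(M)=h$. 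If instead $\operatorname{H-dim}_R(M)=0$ then $M$ is free (or totally reflexive), $\Ext_R^i(M,R)=0$ for all $i>0$, so the only possibly nonzero Ext is $M^\ast$ in degree $0$, forcing $h=0$ and again $\operatorname{H-dim}_R(M)=h$.

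The main obstacle — and the place to be careful — is the degenerate bookkeeping: one must rule out $\operatorname{H-dim}_R(M)$ landing strictly between $0$ and $t$ at a value other than $h$, and this is exactly where the hypothesis that the Ext vanishes on the \emph{entire} block $\{0,\dots,t\}\setminus\{h\}$ is used, combined with the bound $\operatorname{H-dim}_R(M)\leq t=\depth R$ coming from the Auslander--Buchsbaum formula after finiteness is established. I expect no real difficulty beyond assembling these standard facts; the whole argument is essentially a one-line invocation of Corollary \ref{seq88a} followed by reading off the degree of the surviving Ext module.
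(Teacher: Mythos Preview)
Your ``two moves'' are exactly the paper's proof: apply Corollary \ref{seq88a} directly (since zero modules trivially have finite $\operatorname{H-dim}_R$), then use $\operatorname{H-dim}_R(M)=\sup\{i\geq 0:\Ext_R^i(M,R)\neq 0\}$ together with $\operatorname{H-dim}_R(M)\leq t$ to read off the value $h$. The syzygy detour in your first paragraph is unnecessary and, as you yourself note, circular; you can drop it entirely.
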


\begin{proof}
    From the assumption we see that $\operatorname{H-dim}_R(\Ext_R^i(M,R))<\infty$ for all $0 \leq i \leq t$. Then by Corollary \ref{seq88a}, $\operatorname{H-dim}_R(M)<\infty$. Since $\operatorname{H-dim}_R(M)=\sup\{i\geq 0: \Ext_R^i(M,R)\not=0\}$ (see, for instance \cite[Lemma 2.3(c)]{GorensteinDimensionAndTorsionOfModulesOverCommutativeNoetherianRings} together with \cite[Theorem 1.4]{Completeintersectiondimension}) and $\operatorname{H-dim}_R(M)\leq t$, we see from the vanishing condition that $\operatorname{H-dim}_R(M)=~h$.  
\end{proof}

The reader can note that Corollary \ref{daso}, which motivated Question \ref{12qaz}, was obtained using Theorem \ref{sqmm}, which was derived from Theorem \ref{Cdualt}. Thus, in view of  the Foxby equivalence (\cite[Theorem 3.3.2]{GorensteinDimensions}), we pose the following question:

\begin{question} It is possible to obtain a variation of Proposition \ref{lemx} or Theorem   \ref{Cdualt}  involving a dualizing complex $\boldsymbol{D}$ instead of $\omega$ or $C$?
\end{question}

Now, motivated by Corollary \ref{qza}, we pose the following:
\begin{question}\label{qaz}
    Let $M$ and $N$ be non-zero $R$-modules such that $\operatorname{pd}_R(\Ext_R^i(M,N))<\infty$ for all $0\leq i \leq t$ (or all $i \geq 0$). Then does the equivalence 
    $$\operatorname{id}_R(M)<\infty \Longleftrightarrow \operatorname{id}_R(N)<\infty$$ hold?
\end{question}
Note that the implication $(\Leftarrow)$ holds by Theorem \ref{q221}.  On the other hand, the other implication has a positive answer if $M$ is maximal Cohen-Macaulay. Indeed, by the Bass conjecture, $R$ is Cohen-Macaulay, and we may assume (passing to completion if necessary) that $R$ admits a canonical module. Then $M\cong \omega^n$ for some $n>0$ and $\operatorname{pd}_R(\Hom_R(\omega, N))<\infty$, obtaining thus the desired conclusion from Proposition \ref{GdimCrs}(3).

Motivated by Corollary \ref{newGor} and \cite[Corollary 5.13]{OnModulesWhoseDualIsOfFiniteGorensteinDimension}, we close this section by asking the following.

\begin{question}
Assume $R$ is Cohen-Macaulay with canonical module $\omega$, and let $\_^\dagger$ stand for $\Hom_R(\_,\omega)$. Does any of the conditions $\operatorname{Gid}_R(\omega^*)<\infty$ or $\operatorname{G-dim}_R((\omega^*)^\dagger)<\infty$  imply that $R$ must be Gorenstein?
\end{question}

\subsection*{Addendum} After the first version of this paper, recently
Questions \ref{12qaz} and \ref{qaz} were affirmatively answered by Kaito Kimura in \cite{FinitenessOfHomologicalDimensionOfExtModules}. The technique in his paper differs from the one we used in 
Theorem \ref{wqq11}.

\section{Examples}\label{sectionexamples}

We present some examples concerning Ext modules with finite homological dimension. 

\begin{example}
    Assume $R$ is a local ring and let $\underline{x}=x_1,\ldots,x_n$ be a regular sequence. Let $N$ be an $R$-module such that $\underline{x}N=0$. Set $M=R/(\underline{x})$. Since the minimal free resolution of $M$ is the Koszul complex of the sequence $\underline{x}$, we have
    $$\Ext^i_R(M,N)\cong N^{\binom{n}{i}}, \quad\forall \ i=0,\ldots,n.$$
    Thus, $\Hdim_R(\Ext^i_R(M,N))<\infty$ for all $0\leq i \leq n$ if and only if $\Hdim_R(N)<\infty.$ Similarly, $\operatorname{Hid}_R(\Ext^i_R(M,N))<\infty$ for all $0\leq i \leq n$ if and only if $\operatorname{Hid}_R(N)<\infty.$

    Now, set $L=\oplus_{i=1}^nR/(x_1,\ldots,x_i)$ and let $X$ be a finitely generated $R$-module such that $\underline{x}$ is $X$-regular. For all $i=1,\ldots,n$, by the Koszul duality we must have $$\Ext^i_R(L,X)\cong\Ext^i_R(R/(x_1,\ldots,x_i),X)\cong X/(x_1,\ldots,x_i)X.$$

    Thus $\operatorname{H-dim}_R(X)<\infty$ if and only if $\operatorname{H-dim}_R( \Ext_R^i(L,X))<\infty$ for all $0\leq i \leq n$. In particular,   $\Hdim_R(\Ext^i_R(L,R))=i$, by the Auslander-Buchsbaum and Auslander-Bridger formulas.
\end{example}

An $R$-module $M$ is said to be $\operatorname{H-}$\textit{perfect} if $\operatorname{H-dim}_R(M)<\infty$ and $\operatorname{grade}(M)=\operatorname{H-dim}_R(M)$.
\begin{example}
    Let $M_1,\ldots, M_n$ be H-perfect $R$-modules of grade $g_1,\ldots, g_n$, respectively. Then each  $\Ext_R^{g_i}(M_i, R)$ have $\operatorname{H-dim}$ equal $g_i$ by \cite[Theorem 6.3]{AbsoluteRelativeAndTateCohomologyOfModulesOfFiniteGorensteinDimension}. Set $M=\bigoplus_{i=1}^n M_i$. Thus, for  $j\not=g_1, \ldots, g_n$, the $R$-module $\Ext_R^j(M,R)$ is zero, while that $\operatorname{H-dim}_R(\Ext_R^{g_i}(M,R))=g_i$ for each $1\leq i \leq n$. 
\end{example}

 An $R$-module $M$ is $p$-\textit{spherical} if $\operatorname{pd}_R(M)\leq p$ and $\Ext_R^i
(M, R) =0$ for all $1\leq i \leq p-1$. 

\begin{example} Based on the proof of \cite[Proposition 6]{anneauxdegorensteinettorsionenalgebrecommutative} we can construct spherical non-perfect modules as follows. We repeat the proof for convenience. Let $M$ be an $R$-module with positive grade $n$, and let $F_\bullet$ be a free resolution of $M$. Consider the truncation
    $$\xymatrix@=1em{
    0\ar[r] & F_n\ar[r]^\phi & F_{n-1}\ar[r] & \cdots\ar[r] & F_2\ar[r] & F_1\ar[r] & F_0\ar[r] & M\ar[r] & 0.
    }$$
    Set $N:=\operatorname{coker}(\phi^*)$. Since $\grade (M)=n>0$, the complex
    $$\xymatrix@=1em{
    0\ar[r] & F_0^*\ar[r] & F_1^*\ar[r] & F_2^*\ar[r] & \cdots\ar[r] & F_{n-1}^*\ar[r]^{\phi^*} & F_n^*\ar[r] & N\ar[r] & 0
    }$$
    is a free resolution of $N$ and
    $$\xymatrix@=1em{
    0\ar[r] & N^*\ar[r] & F_n\ar[r]^{\phi} & F_{n-1}\ar[r] & \cdots\ar[r] & F_2\ar[r] & F_1\ar[r] & F_0\ar[r] & M\ar[r] & 0
    }$$
    is exact. This means that $N$ is $n$-spherical as well as $N^*$ is a $(n+1)$th syzygy of $M$ (in the sense of \cite{GorensteinDimensionAndTorsionOfModulesOverCommutativeNoetherianRings}), and also $\Ext^n_R(N,R)\cong M$. It follows that whenever $M$ is such that $\pd_R(M)>\grade(M)=n>0$, there exists a $n$-spherical non-perfect $R$-module $N$ such that $\Ext^n_R(N,R)\cong M$. In particular, if $\operatorname{H-dim}_R(M)<\infty$ (resp. $\operatorname{Hid}_R(M)<\infty)$ then $\operatorname{H-dim}_R(\Ext_R^n(N,R))<\infty$ (resp. $\operatorname{Hid}_R(\Ext_R^n(N,R))<\infty$).
\end{example}

\begin{example}
   Let $k$ be a field and consider $R=k[[x,y]]/(xy)$. Then $M=R/(x)$ has infinite free resolution
    $$\xymatrix@=1em{\cdots\ar[r] & R\ar[r]^x & R\ar[r]^y & R\ar[r]^x & R\ar[r] & M\ar[r] & 0}$$
   so that $\Ext^{2i+1}_R(M,M)\cong M$ for all $i$, which means that $\operatorname{G-dim}_R(\Ext^{2i+1}_R(M,M))=0$ and $\pd_R(\Ext^{2i+1}_R(M,M))=\infty$.
\end{example}

\bibliographystyle{plain}
\bibliography{references.bib}

\begin{thebibliography}{10}

\bibitem{Stablemoduletheory}
M.~Auslander and M.~Bridger.
\newblock {\em Stable module theory}.
\newblock Mem. of the AMS, No. 94. American Mathematical Society, Providence, R.I., 1969.

\bibitem{Completeintersectiondimension}
L.~Avramov, V.~Gasharov, and I.~Peeva.
\newblock Complete {I}ntersection dimension.
\newblock {\em Publications Math\'ematiques de l'IH\'ES}, 86:67--114, 1997.

\bibitem{AbsoluteRelativeAndTateCohomologyOfModulesOfFiniteGorensteinDimension}
L.~Avramov and A.~Martsinkovsky.
\newblock Absolute, relative, and {T}ate cohomology of modules of finite {G}orenstein dimension.
\newblock {\em Proc. Lond. Math. Soc.}, 85, 11 2000.

\bibitem{HomologicalDimensionsAndRelatedInvariantsOfModulesOverLocalRings}
Luchezar~L. Avramov.
\newblock Homological dimensions and related invariants of modules over local rings.
\newblock In {\em Representations of algebra. {V}ol. {I}, {II}}, pages 1--39. Beijing Norm. Univ. Press, Beijing, 2002.

\bibitem{bruns}
W.~Bruns and J.~Herzog.
\newblock {\em Cohen-{M}acaulay Rings}.
\newblock Cambridge University Press, New York, 1998.

\bibitem{GorensteinDimensions}
L.~W. Christensen.
\newblock {\em Gorenstein Dimensions}, volume 1747.
\newblock Springer, 2000.

\bibitem{Semi-dualizingComplexesAndTheirAuslanderCategores}
L.~W. Christensen.
\newblock Semi-dualizing complexes and their {A}uslander categories.
\newblock {\em Trans. Amer. Math. Soc.}, 353:1839--1883, 2001.

\bibitem{HyperhomologicalAlgebra}
L.~W. Christensen and H.-B. Foxby.
\newblock Hyperhomological algebra with applications to commutative.
\newblock Avaliable at https://www.math.ttu.edu/~lchriste/download/918-final.pdf, 2006.

\bibitem{OnGorensteinProjectiveInjectiveAndFlatDimensionsAFunctorialDescriptionWithApplications}
L.~W. Christensen, A.~Frankild, and H.~Holm.
\newblock On {G}orenstein projective, injective and flat dimensions—a functorial description with applications.
\newblock {\em J. Algebra}, 302(1):231--279, 2006.

\bibitem{DerivedCategoryMethodsInCommutativeAlgebra}
L.W. Christensen, H.-B. Foxby, and H.~Holm.
\newblock {\em Derived Category Methods in Commutative Algebra}.
\newblock Springer Monographs in Mathematics Series. Springer, 2024.

\bibitem{HomologicalDimensionsTheGorensteinPropertyAndSpecialCasesOfSomeConjectures}
S.~Dey, R.~Holanda, and C.~B. Miranda-Neto.
\newblock Homological dimensions, the {G}orenstein property, and special cases of some conjectures.
\newblock {\em arXiv 2405.00152v1}, 2024.

\bibitem{FH}
T.~Fiel and R.~Holanda.
\newblock Bass and {Betti} numbers of a module and its deficiency modules.
\newblock {\em J. Commut. Algebra}, 16(2):197--211, 2024.

\bibitem{onthemuiinaminimalinjectiveresolution}
H.-B. Foxby.
\newblock On the $\mu^i$ in a minimal injective resolution.
\newblock {\em Math. Scand.}, 29(2):175--186, 1971.

\bibitem{isomorphismsbetweencomplexeswithapplicationstothehomologicaltheoryofmodules}
H.-B. Foxby.
\newblock Isomorphisms between complexes with applications to the homological theory of modules.
\newblock {\em Math. Scand.}, 40:5--19, 1977.

\bibitem{GorensteinringsviahomologicaldimensionsandsymmetryinvanishingofExtandTatecohomology}
D.~Ghosh and T.~J. Puthenpurakal.
\newblock Gorenstein rings via homological dimensions, and symmetry in vanishing of {E}xt and {T}ate cohomology.
\newblock {\em Algebr. Represent. Theory}, 27(1):639--653, 2024.

\bibitem{ARCForModulesWhose(Self)DualHasFiniteCompleteIntersectionDimensionv2}
D.~Ghosh and M.~Samanta.
\newblock Auslander-{R}eiten conjecture for modules whose (self) dual has finite {C}omplete {I}ntersection dimension.
\newblock {\em arXiv 2405.01497v2}, 2024.

\bibitem{injectivedimensiontakahashi}
D.~Ghosh and R.~Takahashi.
\newblock Auslander-{R}eiten conjecture and finite injective dimension of {$\operatorname{Hom}$}.
\newblock {\em Kyoto J. Math.}, 64(1):229 -- 243, 2024.

\bibitem{VanishingOf(Co)homologyfrenessCriteriaAndTheAuslanderReitenConjectureforCohen-MacaulayBurch}
R.~Holanda and C.~B. Miranda-Neto.
\newblock Vanishing of (co)homology, freeness criteria, and the {A}uslander-{R}eiten conjecture for {C}ohen-{M}acaulay {B}urch rings.
\newblock {\em arXiv 2212.05521}, 2022.

\bibitem{GdimensionOverLocalHomomorphisms}
S.~Iyengar and S.~Sather-Wagstaff.
\newblock {G-dimension over local homomorphisms. Applications to the Frobenius endomorphism}.
\newblock {\em Illinois J. Math.}, 48(1):241 -- 272, 2004.

\bibitem{FinitenessOfHomologicalDimensionOfExtModules}
K.~Kimura.
\newblock Finiteness of homological dimensions of {Ext} modules.
\newblock {\em arXiv preprint arXiv:2508.12843v2}, 2025.

\bibitem{anneauxdegorensteinettorsionenalgebrecommutative}
M.~Mangeney, C.~Peskine, and L.~Szpiro.
\newblock Anneaux de {Gorenstein,} et torsion en alg\`ebre commutative.
\newblock {\em S\'eminaire Samuel. Alg\`ebre commutative}, 1:2--69, 1966-1967.

\bibitem{GorensteinDimensionAndTorsionOfModulesOverCommutativeNoetherianRings}
V.~Maşiek.
\newblock Gorenstein dimension and torsion of modules over commutative {N}oetherian rings.
\newblock {\em Commun. Algebra}, 28(12):5783--5811, 2000.

\bibitem{AuslanderReitenCinjetivadimensionVanishingofExt}
V.~D. Mendoza-Rubio and V.~H. Jorge-P{\'e}rez.
\newblock The {A}uslander–{R}eiten conjecture finite {C}-injective dimension of {Hom}, and vanishing of {Ext}.
\newblock {\em J. Commut. Algebra}, 17(2):153--168, 2025.

\bibitem{OnModulesWhoseDualIsOfFiniteGorensteinDimension}
V.D. Mendoza-Rubio and V.~H. Jorge-P{\'e}rez.
\newblock On modules whose dual is of finite {G}orenstein dimension.
\newblock {\em Collect. Math.}, 2025.

\bibitem{Dimensionprojectivefinietcohomologielocale}
C.~Peskine and L.~Szpiro.
\newblock Dimension projective finie et cohomologie locale.
\newblock {\em Publications Math\'ematiques de l'IH\'ES}, 42:47--119, 1973.

\bibitem{VanishingOfCohomologyOverCompleteIntersectionRings}
A.~Sadeghi.
\newblock Vanishing of cohomology over {C}omplete {I}ntersection rings.
\newblock {\em Glasg. Math. J.}, 57(2):445–455, 2015.

\bibitem{SemidualizingModules}
S.~Sather-Wagstaff.
\newblock Semidualizing modules.
\newblock Avaliable at \url{https://ssather.people.clemson.edu/DOCS/sdm.pdf}.

\bibitem{CompleteIntersectionDimensionsandFoxbyClasses}
S.~Sather-Wagstaff.
\newblock Complete {I}ntersection dimensions and {F}oxby classes.
\newblock {\em J. Pure Appl. Algebra}, 212(12):2594--2611, 2008.

\bibitem{Sch98}
P.~Schenzel.
\newblock On the use of local cohomology in algebra and geometry.
\newblock In {\em Six lectures on commutative algebra. Lectures presented at the summer school, Bellaterra, Spain, July 16--26, 1996}, pages 241--292. Basel: Birkh{\"a}user, 1998.

\bibitem{HomologicalAspectsOfSemidualizingModules}
R.~Takahashi and D.~White.
\newblock Homological aspects of semidualizing modules.
\newblock {\em Math. Scand.}, 106(1):5--22, 2010.

\bibitem{HomologicalDimensionsOfRigidModules}
M.~R. Zargar, O.~Celikbas, M.~Gheibi, and A.~Sadeghi.
\newblock {Homological dimensions of rigid modules}.
\newblock {\em Kyoto J. Math.}, 58(3):639 -- 669, 2018.

\bibitem{NumericalApectsofComplexesOfFiniteHomologicalDimensions}
M.~R. Zargar and M.~Gheibi.
\newblock Numerical aspects of complexes of finite homological dimensions.
\newblock {\em J. Commut. Algebra}, 3(16):353--362, 2024.

\end{thebibliography}
\end{document}